\newtheorem{theorem}{Theorem}[section]
\newtheorem{lemma}[theorem]{Lemma}
\newtheorem{corollary}[theorem]{Corollary}
\newtheorem{proposition}[theorem]{Proposition}
\newtheorem{definition}[theorem]{Definition}
\newtheorem{example}[theorem]{Example}
\newtheorem{problem}[theorem]{Problem}
\newproof{proof}{Proof}
\numberwithin{equation}{section}
\newcommand{\e}{\varepsilon}
\newcommand{\w}{\omega}
\newcommand{\NN}{\mathbb{N}}
\newcommand{\IR}{\mathbb{R}}
\newcommand{\II}{\mathbb{I}}
\newcommand{\yyy}{\mathbf{y}}
\newcommand{\zzz}{\mathbf{z}}
\newcommand{\FF}{\mathcal{F}}
\newcommand{\V}{\mathcal{V}}
\newcommand{\U}{\mathcal{U}}
\newcommand{\W}{\mathcal{W}}
\newcommand{\UU}{\mathcal{U}}
\newcommand{\KK}{\mathcal{K}}
\newcommand{\Nn}{\mathcal{N}}
\newcommand{\AAA}{\mathcal A}
\newcommand{\I}{\mathcal{I}}
\newcommand{\CC}{C_k}
\newcommand{\SM}{{\setminus}}
\begin{document}

\begin{frontmatter}

\title{Ascoli and sequentially Ascoli spaces}

\author{S.~Gabriyelyan}
\ead{saak@math.bgu.ac.il}
\address{Department of Mathematics, Ben-Gurion University of the Negev, Beer-Sheva, P.O. 653, Israel}

\begin{abstract}
A Tychonoff space $X$ is called ({\em sequentially}) {\em Ascoli} if every compact subset (resp. convergent sequence) of $\CC(X)$ is evenly continuous, where $\CC(X)$ denotes the space of all real-valued continuous functions on $X$ endowed with the compact-open topology.  Various properties of (sequentially) Ascoli spaces are studied, and we give several characterizations of sequentially Ascoli spaces. Strengthening a result of Arhangel'skii we show that a hereditary Ascoli space is Fr\'{e}chet--Urysohn. 
A locally compact abelian group $G$ with the Bohr topology is sequentially Ascoli iff $G$ is compact. If $X$ is totally countably compact or near sequentially compact then it is a sequentially Ascoli space. 
The product of a locally compact space and an Ascoli space is Ascoli. If additionally $X$ is a $\mu$-space, then $X$ is locally compact iff the product of $X$ with any Ascoli space is an Ascoli space. Extending one of the main results of \cite{GGKZ} and  \cite{Gabr-B1} we show that $C_p(X)$ is sequentially Ascoli iff $X$ has the property $(\kappa)$. We give a necessary condition on $X$ for which the space $\CC(X)$ is sequentially Ascoli.
For every metrizable abelian group $Y$, $Y$-Tychonoff space $X$, and nonzero countable ordinal $\alpha$, the space $B_\alpha(X,Y)$  of Baire-$\alpha$ functions from $X$ to $Y$ is $\kappa$-Fr\'{e}chet--Urysohn  and hence Ascoli.
\end{abstract}

\begin{keyword}
$C_p(X)$ \sep $\CC(X)$ \sep Baire-$\alpha$ function \sep  Ascoli \sep  sequentially Ascoli \sep  $\kappa$-Fr\'{e}chet--Urysohn \sep $P$-space

\MSC[2010]  46E40 \sep 54A05 \sep  54B05 \sep   54C35 \sep 54D20

\end{keyword}

\end{frontmatter}



\section{Introduction}


Various topological properties generalizing metrizability have been  intensively studied  both by topologists and analysts  for  a long time. Especially important  properties are those ones which generalize metrizability: the Fr\'{e}chet--Urysohn property, sequentiality, the $k$-space property and countable tightness.  These properties are intensively studied for function spaces, free locally convex spaces, $(LM)$-spaces, strict $(LF)$-spaces and their strong duals, and Banach and Fr\'{e}chet spaces in the weak topology etc., see for example \cite{Arhangel,Gabr,Jar,kaksax,mcoy,S-W} and references therein.

A much weaker notion than Fr\'{e}chet--Urysohness was introduced by Arhangel'skii and studied in \cite{LiL}: a topological space $X$ is said to be {\em $\kappa$-Fr\'{e}chet--Urysohn} if for every open subset $U$ of $X$ and every $x\in \overline{U}$, there exists a sequence $\{ x_n\}_{n\in\w} \subseteq U$ converging to $x$.

For Tychonoff  topological spaces $X$ and $Y$, we denote by $\CC(X,Y)$ and $C_p(X,Y)$ the space $C(X,Y)$ of all continuous functions from $X$ to $Y$ endowed with the compact-open topology or the pointwise topology, respectively. If $Y=\IR$, we set $\CC(X):=\CC(X,\IR)$ and $C_p(X):=C_p(X,\IR)$. Being motivated by the classical Ascoli theorem we introduced in \cite{BG} a new class of topological spaces, namely, the class of Ascoli spaces. A Tychonoff space $X$  is {\em Ascoli} if every compact subset $\KK$ of $\CC(X)$  is evenly continuous, that is the map $X\times\KK \ni(x,f)\mapsto f(x)\in\IR$ is continuous. By Ascoli's theorem \cite[Theorem 3.4.20]{Eng}, each $k$-space is Ascoli.
The Ascoli property in topological spaces, topological groups, (free) locally convex spaces, function spaces etc. was intensively studied in \cite{Banakh-Survey,BG,Gabr-C2,Gabr-LCS-Ascoli,Gab-LF,Gabr-B1,Gabr-ind-lim,GGKZ,GKP}.
Being motivated by the classical notion of $c_0$-barrelled locally convex spaces, in \cite{Gabr:weak-bar-L(X)} we defined a Tychonoff space $X$ to be {\em sequentially Ascoli} if every convergent sequence in $\CC(X)$ is equicontinuous. Clearly, every Ascoli space is sequentially Ascoli, but the converse is not true in general, see \cite{Gabr:weak-bar-L(X)}. Below we formulate some of the most interesting results (although the clauses (i)--(iv) were proved for the property of being an Ascoli space, their proofs and Proposition \ref{p:Ascoli-sufficient} show that one can replace ``Ascoli'' by ``sequentially Ascoli'').

\begin{theorem} \label{t:Ascoli-seq-Ascoli-Banach}
\begin{enumerate}
\item[{\rm(i)}] {\rm (\cite{Gabr-LCS-Ascoli})} A Fr\'{e}chet space $E$ with the weak topology is a sequentially Ascoli space if and only if $E=\mathbb{F}^n$ or $E=\mathbb{F}^{\w}$, where $\mathbb{F}=\IR$ or $\mathbb{C}$ is the field of $E$.
\item[{\rm(ii)}] {\rm (\cite{Banakh-Survey,GKP})} The closed unit ball of a Banach space $E$ endowed with the weak topology is a sequentially Ascoli space if and only if $E$ does not contain an isomorphic copy of $\ell_1$.
\item[{\rm(iii)}] {\rm (\cite{Gab-LF})} A strict $(LF)$-space $E$ is a sequentially Ascoli space if and only if $E$ is a Fr\'{e}chet space or $E=\phi$.
\item[{\rm(iv)}] {\rm (\cite{Gab-LF})} The space of distributions $\mathcal{D}'(\Omega)$ is not a sequentially Ascoli space.
\item[{\rm(v)}] {\rm (\cite{Gabr-B1,GGKZ})} $C_p(X)$ is an Ascoli space if and only if $X$ has the property $(\kappa)$.
\end{enumerate}
\end{theorem}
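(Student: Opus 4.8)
The plan is to read the statement as a compilation of five independently established facts and to assemble it from its sources, the unifying thread being that each clause isolates a structural or combinatorial obstruction to even continuity of compact sets (for the Ascoli property) or to equicontinuity of convergent sequences (for the sequentially Ascoli property). Since clauses (i)--(iv) are recorded in \cite{Gabr-LCS-Ascoli,Banakh-Survey,GKP,Gab-LF} for the Ascoli property, the genuinely new point in those four is the passage from ``Ascoli'' to ``sequentially Ascoli''; for this I would invoke the observation of the preceding remark --- that each cited argument already produces a convergent \emph{sequence}, rather than an abstract compact set, witnessing the failure --- together with the sufficiency supplied by Proposition~\ref{p:Ascoli-sufficient}. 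Clause (v) is stated and used for the Ascoli property itself and needs no such upgrade.

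For clauses (i) and (ii) the mechanism is the weak topology of an infinite-dimensional Banach space, which is known to fail the Ascoli property. In (i) the exceptional spaces are handled directly: $\mathbb{F}^n$ is finite-dimensional, and on $\mathbb{F}^{\w}$ the weak topology coincides with the metrizable product topology, so both are $k$-spaces and hence Ascoli. For the converse I would reduce any remaining infinite-dimensional Fréchet space to the Banach situation --- through a continuous norm or an associated infinite-dimensional local Banach space --- and transport the obstruction back to $E$. In (ii) the dividing line is Rosenthal's $\ell_1$-dichotomy: if $E$ omits $\ell_1$, every bounded sequence has a weakly Cauchy subsequence and the weak topology on the ball is tame enough to force convergent sequences in $\CC$ to be equicontinuous; if $E$ contains $\ell_1$, the oscillating bounded sequence produced by Rosenthal's theorem is exactly the input needed to build a weakly convergent, non-equicontinuous sequence on the ball.

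For (iii) and (iv) the obstruction is the staircase geometry of strict inductive limits. In a strict $(LF)$-space $E=\varinjlim E_n$ with all inclusions proper and $E\neq\phi$, some step $E_n$ is an infinite-dimensional Fréchet space, and I would construct a sequence that climbs the steps to yield a convergent, non-equicontinuous sequence; the two exceptional cases are harmless, since a Fréchet space is metrizable (hence a $k$-space) and in $\phi$ every bounded set lies in a finite-dimensional step, so convergent sequences collapse. Clause (iv) then follows by exhibiting, as in \cite{Gab-LF}, an explicit convergent sequence in $\CC\big(\mathcal{D}'(\Omega)\big)$ that fails equicontinuity, exploiting the non-metrizable structure of the space of distributions.

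Clause (v), the characterization of $C_p(X)$ via property $(\kappa)$, is the deepest and I expect it to be the main obstacle. Here I would translate even continuity of the compact subsets of $\CC\big(C_p(X)\big)$ into a combinatorial statement about sequences of finite subsets of $X$: point evaluations and finitely supported functionals identify the relevant compact sets with families of finite subsets, and property $(\kappa)$ --- which asks that from every sequence of finite subsets of $X$ one may select a suitably well-separated (e.g.\ locally finite) subfamily --- is precisely the condition allowing those finite sets to be reorganized so that the evaluation map becomes jointly continuous. The forward direction extracts property $(\kappa)$ from a failure of even continuity, while the converse builds the required joint continuity from $(\kappa)$; the careful bookkeeping of this correspondence, carried out in \cite{Gabr-B1,GGKZ}, is the technical heart of the whole statement.
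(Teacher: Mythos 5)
Your proposal coincides with the paper's own treatment: Theorem \ref{t:Ascoli-seq-Ascoli-Banach} is not proved in the text but assembled from the cited sources, with the only new content being the remark immediately preceding it --- that the proofs of (i)--(iv) in \cite{Gabr-LCS-Ascoli,Banakh-Survey,GKP,Gab-LF} together with Proposition \ref{p:Ascoli-sufficient} (applied with $I=\w$) permit replacing ``Ascoli'' by ``sequentially Ascoli'', while (v) is quoted verbatim for the Ascoli property --- which is exactly the assembly you describe. One cosmetic correction to your gloss of clause (v): property $(\kappa)$ asks that every \emph{pairwise disjoint} sequence of finite subsets of $X$ have a \emph{strongly point-finite} subsequence (not a locally finite one), but this slip occurs only in your sketch of the cited argument and does not affect the structure of the proof.
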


The following diagram describes the relationships between the aforementioned properties:
\[
\xymatrix{
& {\substack{\mbox{$\kappa$-Fr\'{e}chet--} \\ \mbox{Urysohn}}} \ar@{=>}[rr] & & {\mbox{Ascoli}} \ar@{=>}[r] & {\substack{\mbox{sequentially} \\ \mbox{Ascoli}}}\\
{\mbox{metric}} \ar@{=>}[r] & {\substack{\mbox{Fr\'{e}chet--} \\ \mbox{Urysohn}}} \ar@{=>}[r] \ar@{=>}[u] & {\mbox{sequential}} \ar@{=>}[r]  &  {\mbox{$k$-space}} \ar@{=>}[u] &
}
\]
None of these implications is reversible, see \cite{BG,Eng,Gabr-B1,Gabr:weak-bar-L(X)}. For further generalizations of Ascoli spaces see \cite{Banakh-g-Ascoli}.

Now we describe the content of the paper whose main purpose is the further study of Ascoli and sequentially Ascoli spaces. In Section \ref{sec:seq-Y-Ascoli} we establish some categorical properties of sequentially Ascoli spaces analogous to the corresponding properties of Ascoli spaces considered in \cite{BG}. In particular, various characterizations of sequentially Ascoli spaces are given in Theorem \ref{t:seq-R-Ascoli}. We show that the square of a  Fr\'{e}chet--Urysohn space can be not Ascoli (Proposition \ref{p:square-non-Ascoli}). Strengthening a result of Arhangel'skii we prove in Theorem \ref{t:her-Ascoli-k-space} that a hereditary Ascoli space is Fr\'{e}chet--Urysohn, however every non-discrete $P$-space is hereditary sequentially Ascoli but  Ascoli (Proposition \ref{p:P-space-2-Ascoli}).

In Section \ref{sec:compact-Ascoli} we show that a locally compact abelian group $G$ endowed with the Bohr topology is sequentially Ascoli if and only if $G$ is compact (Theorem \ref{t:seq-Ascoli-G+}). In Theorem \ref{t:seq-compact-seq-Ascoli} we prove that if $X$ is totally countably compact or near sequentially compact, then it is a sequentially Ascoli space. However  there are countably compact spaces which are not sequentially Ascoli (Proposition \ref{p:countably-compa-not-Ascoli}).

In Section \ref{sec:product-Ascoli} we prove that the product of a locally compact space and an Ascoli space is Ascoli (Theorem \ref{t:product-lc-Ascoli}). In Theorem \ref{t:Ascoli-product-mu} we partially reverse this result by showing that  a $\mu$-space $X$ is locally compact if and only if the product of $X$ with an arbitrary Ascoli (even Fr\'{e}chet--Urysohn) space is an Ascoli space.

In the last Section \ref{sec:seq-Ascoli-Cp} we study various function spaces which are sequentially Ascoli. In Theorem \ref{t:Cb-Ascoli} we extend (v) of Theorem \ref{t:Ascoli-seq-Ascoli-Banach} by showing that $C_p(X)$ is sequentially Ascoli if and only if $X$ has the property $(\kappa)$. Let $Y$ be a metrizable abelian group, $X$ be a $Y$-Tychonoff space, and let $\alpha$ be a nonzero countable ordinal.  In Theorem \ref{t:Baire-kFU} we show that the space $B_\alpha(X,Y)$  of all Baire-$\alpha$ functions from $X$ to $Y$ is $\kappa$-Fr\'{e}chet--Urysohn  and hence Ascoli.
To obtain these results we actively use the idea successfully applied in  \cite{Gabr-B1} and which is that instead of the whole function space $C_p(X,Y)$ we consider only some of its sufficiently rich and saturated subspaces in the sense of Definition \ref{def:relatively-Y-Tych}.
Finally in Proposition \ref{p:seq-Ascoli-Ck-necessary} we obtain a necessary condition on $X$ for which the space $\CC(X)$ is sequentially Ascoli.


\section{Sequentially $Y$-Ascoli spaces} \label{sec:seq-Y-Ascoli}


In this section we characterize sequentially Ascoli spaces and prove some standard categorical assertions. Also we provide several examples which clarify relationships between the considered notions and pose open questions.

Recall that a family $\I$ of compact subsets of a topological space $X$ is called an {\em ideal of compact sets} if $\bigcup\I=X$ and for any sets $A,B\in\I$ and any compact subset $K\subseteq X$ we get $A\cup B\in\I$ and $A\cap K\in\I$. Let $\FF(X)$ and $\KK(X)$ be the family of all finite subsets or all compact subsets of $X$, respectively. Denote by $\mathcal{S}(X)$ the family of all finite unions of convergent sequences in $X$. Clearly, $\FF(X)$, $\mathcal{S}(X)$ and $\KK(X)$ are  ideals of compact sets in $X$ and $\FF(X)\subseteq \mathcal{S}(X)\subseteq \KK(X)$.

We shall consider the following separation axioms.
\begin{definition} \label{def:relatively-Y-Tych}{\em
Let $Y$ be a topological space. A topological space $X$ is called
\begin{enumerate}
\item[$\bullet$] (\cite{BG-Baire}) {\em $Y$-Tychonoff} if for every closed subset $A$ of $X$, point $y_0\in Y$ and function $f:F\to Y$ defined on a finite subset  $F$ of $X\setminus A$, there exists a continuous function ${\bar f}:X\to Y$ such that ${\bar f}{\restriction}_F=f$ and ${\bar f}(A)\subseteq \{ y_0\}$;
\item[$\bullet$] (\cite{BG-Baire}) {\em $Y$-normal} if $X$ is a $T_1$-space and for any closed set $F\subseteq X$ and each continuous function $f:F\to Y$ with finite image $f(F)$ there exists a continuous function ${\bar f}:X\to Y$ such that ${\bar f}|_F=f$;
\item[$\bullet$] (\cite{GO}) {\em $Y_\I$-Tychonoff} for an ideal $\I$ of compact subsets of $X$ if for every closed subset $A$ of $X$, point $y_0\in Y$ and function $f:K\to Y$ with finite image $f(K)$ defined on a subset  $K\in\I$ of $X\setminus A$, there exists a continuous function ${\bar f}:X\to Y$ such that ${\bar f}{\restriction}_K=f$ and ${\bar f}(A)\subseteq \{ y_0\}$.
\end{enumerate} }
\end{definition}
For $\I=\FF(X)$, $\mathcal{S}(X)$ or $\KK(X)$ we shall say that $X$ is $Y_p$-Tychonoff, $Y_s$-Tychonoff or $Y_k$-Tychonoff, respectively.
Clearly, $X$ is $Y_p$-Tychonoff if and only if it is $Y$-Tychonoff, and
\[
\xymatrix{
\mbox{$Y_k$-Tychonoff}  \ar@{=>}[r] & \mbox{$Y_\I$-Tychonoff}\ar@{=>}[r] & \mbox{$Y$-Tychonoff}.  }
\]
If $Y$ is path-connected and admits a non-constant continuous function $\chi:Y\to\IR$, then, by Proposition 2.9 of \cite{GO}, $X$ is Tychonoff if and only if it is $Y_k$-Tychonoff. 

Let $X$ and $Y$ be topological spaces. For every $y\in Y$, we denote by $\yyy\in Y^X$ the constant function defined  by $\yyy(x):=y$ for every $x\in X$.  Each ideal $\I$ of compact subsets of $X$ determines the {\em $\I$-open topology $\tau_\I$} on the power space $Y^X$. A subbase of this topology consists of the sets
\[
[K;U]=\{f\in Y^X :f(K)\subseteq U\},
\]
where $K\in\I$ and $U$ is an open subset of $Y$. In particular, for every $g\in Y^X$, finite subfamily $\FF=\{F_1,\dots,F_n\} \subseteq \I$ and each family $\U=\{ U_1,\dots,U_n\}$ of open subsets of $Y$ such that $g(F_i)\subseteq U_i$ for every $i=1,\dots,n$, the sets of the form
\[
W[g;\FF,\U] :=\bigcap_{i=1}^n W[g;  F_i,U_i], \; \mbox{ where } \; W[g;  F_i,U_i]:= \big\{ f\in Y^X: f(F_i)\subseteq U_i\big\} .
\]
form  a base of the $\I$-open topology $\tau_\I$ at the function $g$. The space  $C(X,Y)$ of all continuous functions from $X$ to $Y$ endowed with the $\I$-open topology induced from $(Y^X,\tau_\I)$ will be denoted by $C_\I(X,Y)$. If $\I=\FF(X), \mathcal{S}(X)$ or $\KK(X)$, we write $\tau_\I =\tau_p, \tau_s$ or $\tau_k$ and $C_\I(X,Y)=C_p(X,Y), C_s(X,Y)$ or $\CC(X,Y)$, respectively.
In the case when $Y=\IR$ we write simply $C_p(X)$, $C_s(X)$ or $\CC(X)$ and observe that the sets of the form
\[
[K;\e]:=[K;(-\e,\e)],
\]
where $K\subseteq X$ belongs to $\I$ and $\e>0$, form a base at the zero function $\mathbf{0}$ of the topology $\tau_\I$. In the case when $Y$ is a discrete abelian group and also for simplicity of notations, we set
\[
[K;0] :=[K;\{0\}].
\]

\begin{proposition} \label{p:Ts=Tp=Tk}
Let $Y$ be a $T_1$-space containing at least two points.  Then:
\begin{enumerate}
\item[{\rm (i)}] $\tau_p\leq\tau_s\leq \tau_k$.
\item[{\rm (ii)}] If $X$ is a $Y$-Tychonoff space, then $\tau_s=\tau_p$ if and only if every convergent sequence in $X$ is essentially constant.
\item[{\rm (iii)}] If $X$ is a $Y_s$-Tychonoff space, then $\tau_s=\tau_k$ if and only if  each compact subset of $X$ is contained in a finite union of convergent sequences.
\end{enumerate}
\end{proposition}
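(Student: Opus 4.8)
The plan is to treat the three clauses in increasing order of difficulty. Clause (i) is immediate: since $\FF(X)\subseteq\mathcal{S}(X)\subseteq\KK(X)$ as families of sets, the corresponding subbases $\{[K;U]:K\in\I,\ U\subseteq Y\ \text{open}\}$ are nested, and enlarging a subbase can only enlarge the generated topology, so $\tau_p\leq\tau_s\leq\tau_k$. For (ii) and (iii) I would prove each equivalence by splitting it into the ``if'' part (a purely set-theoretic comparison of the two ideals as families) and the ``only if'' part (a contrapositive in which the separation hypothesis is used to manufacture a continuous function). I would first record that a $Y$-Tychonoff space $X$, with $Y$ being $T_1$ and containing $y_0\neq y_1$, is automatically $T_1$: applying the extension property with $A=\varnothing$ to the assignment sending $p\mapsto y_0$ and $q\mapsto y_1$ on a two-point set gives a continuous $\bar f$, and $\bar f^{-1}(Y\setminus\{y_1\})$ separates $p$ from $q$. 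I shall also use repeatedly that the extension property with $A=\varnothing$ says exactly that every finite (resp. every $\mathcal{S}(X)$-indexed) partial function with suitable image extends to a continuous function on $X$.

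For (ii), the ``if'' direction is the observation that an essentially constant convergent sequence has finite range, so every member of $\mathcal{S}(X)$ is finite; thus $\mathcal{S}(X)=\FF(X)$ as families of sets, the subbases for $\tau_s$ and $\tau_p$ coincide, and $\tau_s=\tau_p$. For the ``only if'' direction I argue contrapositively. Given a convergent sequence $x_n\to x$ that is not essentially constant, $T_1$-ness forces each value $\neq x$ to occur only finitely often, so the underlying set $S=\{x_n:n\in\w\}\cup\{x\}\in\mathcal{S}(X)$ is infinite. Put $U:=Y\setminus\{y_1\}$, an open neighbourhood of $y_0$, and consider the $\tau_s$-neighbourhood $W[\mathbf{y}_0;S,U]$ of the constant function $\mathbf{y}_0$. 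I claim it contains no basic $\tau_p$-neighbourhood of $\mathbf{y}_0$: any such neighbourhood is determined by a finite set $F$, and since $S$ is infinite I may pick $x_m\in S\setminus F$; applying the $Y$-Tychonoff property with $A=\varnothing$ to the finite assignment sending each $p\in F$ to $y_0$ and $x_m$ to $y_1$ produces a continuous $f$ lying in the chosen $\tau_p$-neighbourhood but with $f(x_m)=y_1\notin U$, so $f\notin W[\mathbf{y}_0;S,U]$. Hence $\tau_s\neq\tau_p$.

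Clause (iii) is handled in the same spirit, with $\mathcal{S}(X)$-sets in place of finite sets and the stronger $Y_s$-Tychonoff property in place of $Y$-Tychonoff; this matching of the ideal to the required extension strength is exactly why the weaker hypothesis suffices in (ii) but not here. For ``only if'', if some compact $K$ is not contained in any finite union of convergent sequences, I test the $\tau_k$-neighbourhood $[K;U]$ (with $U=Y\setminus\{y_1\}$) of $\mathbf{y}_0$ against an arbitrary basic $\tau_s$-neighbourhood $W[\mathbf{y}_0;\{T_1,\dots,T_n\},\{U_1,\dots,U_n\}]$; setting $T:=\bigcup_j T_j\in\mathcal{S}(X)$ there is a point $z\in K\setminus T$, and $Y_s$-Tychonoff (with $A=\varnothing$) extends the assignment sending $T$ to $y_0$ and $z$ to $y_1$, which has finite image, to a continuous $f$ that lies in the $\tau_s$-neighbourhood yet satisfies $f(z)=y_1\notin U$ with $z\in K$, so $f\notin[K;U]$; hence $\tau_s\neq\tau_k$. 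For the ``if'' direction I would upgrade the containment hypothesis to membership: writing $K\subseteq\bigcup_j S_j$ with each $S_j$ a convergent sequence, each $K\cap S_j$ is a compact subset of $S_j$ and therefore either finite or a convergent sequence, so it lies in $\mathcal{S}(X)$, whence $K=\bigcup_j(K\cap S_j)\in\mathcal{S}(X)$. Thus $\KK(X)=\mathcal{S}(X)$ as families, the subbases coincide, and $\tau_s=\tau_k$.

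The substantive steps are the two ``only if'' constructions: there the whole force of the separation axioms is used to turn a purely combinatorial separation of points from a set into an actual continuous witness, and the design choice of constraining the test function only on the finitely many (resp. $\mathcal{S}(X)$-many) points seen by the opposing neighbourhood is what keeps the required extension within reach of $Y$-Tychonoff (resp. $Y_s$-Tychonoff). I expect the main obstacle to lie in the ``if'' direction of (iii): promoting ``contained in a finite union of convergent sequences'' to genuine membership in $\mathcal{S}(X)$ requires that each $K\cap S_j$ be compact, i.e. that compact sets behave like closed sets, so the careful point is to confirm the separation on $X$ (Hausdorffness, or at least that these intersections remain in $\mathcal{S}(X)$) that legitimizes the decomposition $K=\bigcup_j(K\cap S_j)$.
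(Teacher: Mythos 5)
Your proposal is correct and follows essentially the same route as the paper: the paper likewise dismisses (i) as clear, proves (ii) by exactly your contrapositive construction (testing the $\tau_s$-neighbourhood $[S;V]$ of a constant function against finite-set $\tau_p$-neighbourhoods and invoking the $Y$-Tychonoff property to produce a continuous witness taking a forbidden value at some $x_m\in S\setminus F$), and then simply declares that (iii) ``has a similar proof,'' which is what you carry out. Your explicit preliminary observations (that $X$ is $T_1$, and that a non-essentially-constant convergent sequence has infinite underlying set) are details the paper leaves tacit.

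The one loose end you flag at the close --- whether each $K\cap S_j$ is compact and stays in $\mathcal{S}(X)$ in the ``if'' direction of (iii) --- is discharged more cheaply than your per-sequence dichotomy: the paper records in Section 2 that $\mathcal{S}(X)$ is an \emph{ideal of compact sets}, so from $K\subseteq\bigcup_j S_j\in\mathcal{S}(X)$ and $K$ compact one gets $K=K\cap\bigcup_j S_j\in\mathcal{S}(X)$ in one step, with no need to analyze the subspace structure of each $S_j$. (Your instinct that some separation is being used is sound: in a bare $T_1$ space the ideal property of $\mathcal{S}(X)$ can fail, e.g.\ for a sequence with two limits; but under the standing $Y_s$-Tychonoff hypothesis every member of $\mathcal{S}(X)$ is closed --- if $z\in\overline{T}\setminus T$ for $T\in\mathcal{S}(X)$, apply the extension property with $A=\{z\}$ and $f\equiv y_0$ on $T$ to contradict continuity --- which both legitimizes the ideal property in this setting and guarantees continuity of your two-valued partial function $T\mapsto y_0$, $z\mapsto y_1$ in the ``only if'' direction, a point worth noting since the extension axiom can only ever be satisfied for partial functions that are continuous on their domain.)
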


\begin{proof}
(i) is clear. Below we prove only (ii) because (iii) has a similar proof.

(ii) Assume that $\tau_s=\tau_p$ and suppose for a contradiction that $X$ contains an infinite convergent sequence $S=\{ x_n\}_{n\in\w}$ with $x_n\to x_0$.
Since $|Y|>1$, fix two distinct points $z,t\in Y$. As $Y$ is $T_1$, choose an open neighborhood $V$ of $z$ such that $t\not\in V$. Then $[S;V]$ is a $\tau_s$-neighborhood of the constant function $\zzz$.
To get a contradiction (i.e. that $\tau_p\not=\tau_s$), we show that $W[\zzz;F,W]\nsubseteq W[\zzz;S,V]$
for every finite subset  $F$ of $X$ and each open neighborhood $W\subseteq Y$ of $z$. We assume that $W\subseteq V$. Indeed, since $S$ is infinite, there exists an $m\in\w$ such that $x_m\not\in F$. As $X$ is $Y$-Tychonoff, there is $f\in C(X,Y)$ such that $f(F)\subseteq W$ and $f(x_m)\not\in V$. Then $f\in W[\zzz;F,W]\setminus W[\zzz;S,V]$.
Conversely, if every convergent sequence in $X$ is essentially constant then the equality $\tau_s=\tau_p$ holds trivially.\qed
%
\end{proof}

For a cardinal number $\kappa$, let $V(\kappa)$ be the \emph{Fr\'{e}chet-Urysohn fan} which is, by definition, the topological space obtained from $\kappa$ many copies of pairwise disjoint one-to-one convergent sequences by identifying their limit points, endowed with the quotient topology.
Specifically, $V(\kappa) = \bigcup_{i \in \kappa}S_i$, where each $S_i$ is homeomorphic to a convergent sequence with its limit $x_\infty$ and $S_i \cap S_j = \{ x_\infty \}$ whenever $i \ne j$.
All points of $V(\kappa)$ except for $x_\infty$ are isolated, while a neighborhood of $x_\infty$ is any $U \subseteq V(\kappa)$ such that $x_\infty \in U$ and $S_i \setminus U$ is finite for every $i\in\kappa$. It is well-known (and easy to check) that $V(\kappa)$ is a Fr\'{e}chet--Urysohn Tychonoff space.
Note that every compact subset of $V(\kappa)$ has empty intersection with all but finitely many sets of the form $S_i^- = S_i\setminus \{x_\infty\}$.

\begin{example} {\em
Let $Y=\IR$. Now, if $X=\beta\NN$, then $\tau_p=\tau_s<\tau_k$. If $X=V(\kappa)$, 
then $\tau_p<\tau_s =\tau_k$. For $X=[0,1]$, we have $\tau_p<\tau_s<\tau_k$.\qed}
\end{example}

Recall that a subset $A$ of $\CC(X,Y)$ is called {\em evenly continuous} if the evaluation map $\psi: A\times X\to Y, \psi(f,x):=f(x)$, is continuous, i.e., for every $f\in A$, $x\in X$ and neighborhood $U_{f(x)}$ of $f(x)$ there are neighborhoods $V_f \subseteq A$ of $f$ in $A\subseteq \CC(X,Y)$ and $O_x$ of $x$ such that $\psi(g,z)=g(z)\in U_{f(x)}$ for every $g\in V_{f}$ and each $z\in O_x$.

\begin{definition} {\em
Let $X$ and $Y$ be Tychonoff spaces. The space $X$ is called a {\em sequentially $Y$-Ascoli space} if each convergent sequence $S\subseteq \CC(X,Y)$ is evenly continuous. }
\end{definition}
In Theorem \ref{t:seq-R-Ascoli} below we show that  $X$ is a sequentially Ascoli space if and only if $X$ is sequentially $\IR$-Ascoli.
Also we shall repeatedly use without additional mentioning the following remark: Since a convergent sequence $S\subseteq \CC(X,Y)$ has only one non-isolated point, say $f_0$, to check that $S$ is evenly continuous it is sufficient to prove that $S$ is evenly continuous at any point $(f_0,x)$ with $x\in X$.

To characterize sequentially $Y$-Ascoli spaces we need additional notations.
For any topological spaces $X$ and $Y$ the canonical valuation map
\[
\delta:X\to C(\CC(X,Y),Y), \quad \delta(x)(f):=f(x),
\]
assigns to each point $x\in X$ the {\em $Y$-valued Dirac measure} $\delta(x):\CC(X,Y)\to Y$ concentrated at $x$.
The map $\delta$ is well-defined since the function $\delta(x):\CC(X,Y)\to Y$ is continuous at each function $f\in \CC(X,Y)$ (indeed, for any open neighborhood $V\subseteq Y$ of $\delta(x)(f)=f(x)$, the set $[\{x\};V]$ is an open neighborhood of $f$ in $\CC(X,Y)$ with $\delta(x)\big([\{x\};V]\big)\subseteq V$).

All topological groups are assumed to be $T_0$ and hence Tychonoff.
Let $Y$ be an abelian  topological group, and let $X$ be a topological space. A subset $\FF$ of $C(X,Y)$ is called {\em equicontinuous} if for every $x\in X$ and each neighborhood $U$ of zero $0\in Y$ there is a neighborhood $O_x$ of $x$ such that
\[
f(t)-f(x)\in U \; \mbox{ for all }\; t\in O_x \; \mbox{ and }\; f\in\FF.
\]
It is clear that every equicontinuous set is evenly continuous, but the converse is not true in general.



We proved in \cite{BG} that $X$ is $Y$-Ascoli if and only if the canonical map $\delta$ is continuous as a map from $X$ to $\CC(\CC(X,Y),Y)$. Below we obtain an analogous characterization of sequentially $Y$-Ascoli spaces.

\begin{theorem}\label{t:seq-Ascoli}
Let $Y$ be a Tychonoff space containing at least two points,  $X$ be a $Y$-Tychonoff space, and let $\delta: X\to C_s\big(\CC(X,Y),Y\big)$ be the canonical map. Then the following assertions are equivalent:
\begin{enumerate}
\item[{\rm (i)}] $X$ is a sequentially $Y$-Ascoli space;
\item[{\rm (ii)}] $\delta$ is continuous;
\item[{\rm (iii)}]  $\delta$ is an embedding;
\item[{\rm (iv)}] each point $x\in X$ is contained in a dense sequentially $Y$-Ascoli subspace of $X$.
\end{enumerate}
If in addition $Y$ is an abelain group, then {\rm (i)-(iv)} are equivalent to
\begin{enumerate}
\item[{\rm (v)}] every convergent sequence in $\CC(X,Y)$ is equicontinuous.
\end{enumerate}
\end{theorem}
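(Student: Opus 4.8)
The plan is to establish the core equivalence (i) $\Leftrightarrow$ (ii) first, deduce (ii) $\Leftrightarrow$ (iii) and (i) $\Leftrightarrow$ (iv) from it, and finally treat (v) under the group hypothesis. The starting point is to unwind continuity of $\delta$ at a point $x_0$. A subbasic neighborhood of $\delta(x_0)$ in $C_s\big(\CC(X,Y),Y\big)$ has the form $[S;U]=\{\phi:\phi(S)\subseteq U\}$, where $S$ is a finite union of convergent sequences in $\CC(X,Y)$ and $U\subseteq Y$ is open with $f(x_0)\in U$ for all $f\in S$; since finite unions split into finite intersections of conditions, one may assume $S=\{f_n\}_{n\in\w}\cup\{f_0\}$ is a single convergent sequence with $f_n\to f_0$. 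As $\delta^{-1}([S;U])=\bigcap_{f\in S}f^{-1}(U)$, continuity of $\delta$ at $x_0$ means precisely that $\{x: f(x)\in U \text{ for all } f\in S\}$ is a neighborhood of $x_0$ for every such $S$ and $U$. For (i) $\Rightarrow$ (ii) I would use even continuity of $S$ at $(f_0,x_0)$: since $f_0(x_0)\in U$, it yields a tail $\{f_n:n\ge N\}\cup\{f_0\}$ and a neighborhood $O$ of $x_0$ on which these functions take values in $U$, while the finitely many remaining terms $f_0,\dots,f_{N-1}$ are individually continuous, so intersecting $O$ with the $f_i^{-1}(U)$ gives the desired neighborhood. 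Conversely, for (ii) $\Rightarrow$ (i) the key point is that the compact-open topology is finer than the pointwise one (Proposition \ref{p:Ts=Tp=Tk}(i)), so $f_n(x_0)\to f_0(x_0)$; given $U\ni f_0(x_0)$ one passes to a tail $S'$ with $f(x_0)\in U$ for all $f\in S'$, applies continuity of $\delta$ to $S'$ and $U$, and reads off even continuity at $(f_0,x_0)$ with $V=S'$ as the neighborhood of $f_0$ in $S$.

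The equivalence (ii) $\Leftrightarrow$ (iii) reduces to showing, independently of any Ascoli hypothesis, that $\delta$ is injective and relatively open onto its image; both facts come from $X$ being $Y$-Tychonoff together with $Y$ being $T_1$ with at least two points. To separate $x\neq x'$ one extends a two-valued function off the closed set $\{x'\}$; to recover the topology of $X$ one verifies that the sets $f^{-1}(V)$ (with $f\in\CC(X,Y)$ and $V\subseteq Y$ open) form a base: given $x_0$ and an open $O\ni x_0$, apply $Y$-Tychonoff with $A=X\setminus O$ and $F=\{x_0\}$ to obtain $f$ with $f(x_0)=y_1$ and $f(A)\subseteq\{y_0\}$, and choose open $V\ni y_1$ with $y_0\notin V$. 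As these are already traces of $\tau_p$ and $\tau_p\le\tau_s$, the inverse $\delta^{-1}$ is continuous for the finer $\tau_s$-trace as well; combined with (ii) this yields the embedding, and (iii) $\Rightarrow$ (ii) is trivial.

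For (i) $\Leftrightarrow$ (iv), the implication (i) $\Rightarrow$ (iv) is immediate since $X$ itself is a dense sequentially $Y$-Ascoli subspace containing every point. The substance, and what I expect to be the main obstacle, is (iv) $\Rightarrow$ (i): transferring even continuity from a dense subspace $D\ni x_0$ up to $X$. Restriction $\CC(X,Y)\to\CC(D,Y)$ is continuous, so a convergent sequence $S\subseteq\CC(X,Y)$ restricts to a convergent sequence $S|_D$, and continuity of $\delta_D$ (from (i) $\Leftrightarrow$ (ii) applied to $D$) controls the functions of $S$ on a neighborhood $O\cap D$ of $x_0$ in $D$. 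The difficulty is that control on the dense set $O\cap D$ does not directly give control on all of $O$, since $U$ is merely open. I would resolve this using regularity of $Y$: shrink to an open $U_1$ with $\overline{U_1}\subseteq U$, use pointwise convergence to arrange that a tail of $S$ sends $x_0$ into $U_1$, and then exploit that $f(O\cap D)\subseteq U_1$ forces $f(O)\subseteq\overline{U_1}\subseteq U$ by continuity of $f$ and density of $D$, which is exactly even continuity at $(f_0,x_0)$.

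Finally, assuming $Y$ is an abelian topological group, (v) $\Rightarrow$ (i) is immediate because equicontinuity implies even continuity. For (i) $\Rightarrow$ (v) the group structure turns control of values into control of increments: given a neighborhood $U$ of $0\in Y$, choose a symmetric neighborhood $U_1$ with $U_1-U_1\subseteq U$; even continuity provides a tail and a neighborhood $O$ of $x_0$ on which $f_n(z)-f_0(x_0)\in U_1$ (in particular also at $z=x_0$), whence $f_n(z)-f_n(x_0)\in U_1-U_1\subseteq U$, while the finitely many initial terms are handled by their individual continuity. Intersecting the resulting finitely many neighborhoods gives equicontinuity of $S$ at $x_0$, completing the cycle.
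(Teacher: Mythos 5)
Your proposal is correct and follows essentially the same route as the paper's proof: the same cycle of implications, unwinding the subbasic $\tau_s$-neighborhoods $[S;U]$ for (i)$\Leftrightarrow$(ii), separating functions from the $Y$-Tychonoff property for the embedding in (iii), density of the subspace plus regularity of $Y$ for (iv)$\Rightarrow$(i), and the symmetric-neighborhood trick for (i)$\Rightarrow$(v). The only (cosmetic) differences are that in (ii)$\Rightarrow$(i) you pass to a tail $S'$ of the sequence, which cleanly avoids the paper's closure-of-$U_0$ maneuver and its appeal to regularity of $Y$ at that step, and in (i)$\Rightarrow$(v) you exploit the tail structure of a convergent sequence directly instead of the paper's finite-subcover argument for a general compact set.
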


\begin{proof}
(i)$\Rightarrow$(ii) Assume that $X$ is sequentially $Y$-Ascoli. We have to show that $\delta$ is continuous at each point $x_0\in X$. Fix a sub-basic neighborhood
\[
[S;V]\subseteq C_s(\CC(X,Y),Y)
\]
of $\delta(x_0)$, where $S=\{ f_n:n\in\w\}\subseteq \CC(X,Y)$ with $f_n\to f_0$  and $V\subseteq Y$ open. It follows from $\delta(x_0)\in[S;V]$ that $f_n(x_0)=\delta(x_0)(f_n)\in V$ for every $n\in \w$. Since $X$ is sequentially $Y$-Ascoli,
there are $N\in\w$ and an open neighborhood $\mathcal{O}_0$ of $x_0$ such that $f_n(\mathcal{O}_0)\subseteq V$ for every $n>N$. Taking a smaller neighborhood $\mathcal{O}\subseteq X$ of $x_0$ we can assume that $f_n(\mathcal{O})\subseteq V$ for every $n\in\w$. But this means that $\delta(x)\in[S;V]$ for every $x\in\mathcal{O}$. Thus the canonical map $\delta$ is continuous at $x_0$.
\smallskip

(ii)$\Rightarrow$(i) Assume that $\delta$ is continuous. To show that $X$ is sequentially $Y$-Ascoli, we have to check that every convergent sequence $S=\{ f_n:n\in\w\}\subseteq \CC(X,Y)$  with $f_n\to f_0$  is evenly continuous at $(f_0,x)$ with $x\in X$. So fix an $x\in X$ and an open neighborhood $\mathcal{O}_{f_0(x)}\subseteq Y$ of $f_0(x)$.
Using the regularity of $Y$, choose an open neighborhood $\widetilde{\mathcal{O}}_{f_0(x)}$ of $f_0(x)$ such that $\mathrm{cl}_Y \big( \widetilde{\mathcal{O}}_{f_0(x)}\big) \subseteq \mathcal{O}_{f_0(x)}$. Set $U_0 := \big[ \{ x\}; \widetilde{\mathcal{O}}_{f_0(x)}\big] \cap S$ and $S_0:= \mathrm{cl}_S (U_0)$. Then $S_0$ is a convergent sequence such that $S\setminus S_0$ is finite and
\[
\delta(x)(g)=g(x) \in \mathrm{cl}_Y \big( \widetilde{\mathcal{O}}_{f_0(x)}\big) \subseteq \mathcal{O}_{f_0(x)}, \; \mbox{ for every } g\in S_0,
\]
and therefore $\delta(x) \in [S_0; \mathcal{O}_{f_0(x)}]$. Since $\delta$ is continuous, there is a neighborhood $O_x$ of $x$ such that $\delta(O_x)\subseteq \big[S_0; \mathcal{O}_{f_0(x)}\big]$. So, for every $t\in O_x$ and each $g\in U_0 \subseteq S_0$, we have $g(t)=\delta(t)(g)\in \mathcal{O}_{f_0(x)}$, which means that $S$ is evenly continuous at $(f_0,x)$.
\smallskip

(ii)$\Rightarrow$(iii) Fix two distinct points $y,z\in Y$, and choose an open neighborhood $V$ of $y$ such that $z\not\in V$. Since $X$ is $Y$-Tychonoff, the map $\delta$ is injective. So it remains to show that the map $\delta^{-1}: \delta(X) \to X$ is continuous. Fix an $x_0\in X$ and an open neighborhood $U$ of $x_0$. Since $X$ is $Y$-Tychonoff, there is an $f\in C(X,Y)$ such that
\begin{equation} \label{equ:seq-Ascoli-1}
f(x_0)=y \; \mbox{ and }\; f(X\SM U)\subseteq \{z\}.
\end{equation}
Set $S:=\{f_n\}_{n\in\w}$, where $f_n=f$ for every $n\in\w$, and consider the following open basic neighborhood of $\delta(x_0)$:
\[
W:= W[\delta(x_0);S,V] \subseteq C_s\big(\CC(X,Y),Y\big).
\]
Then $x\in \delta^{-1}\big(W\cap \delta(X)\big)$ if and only if  $\delta(x)\in W[\delta(x_0);S,V]$ if and only if $\delta(x)(f)=f(x)\in V$. Now, since $z\not\in V$, (\ref{equ:seq-Ascoli-1}) implies that  $x\in U$. Thus $\delta^{-1}|_{\delta(X)}$ is continuous.
\smallskip

The implications (iii)$\Rightarrow$(ii) and (i)$\Rightarrow$(iv) are trivial.
\smallskip

(iv)$\Rightarrow$(i) Assume that each point $x\in X$ is contained in a dense sequentially $Y$-Ascoli subspace of $X$. We have to show that $X$ is sequentially $Y$-Ascoli.  Take a convergent sequence $S=\{ f_n:n\in\w\}\subseteq \CC(X,Y)$  with $f_n\to f_0$. To show that $S$ is evenly continuous, fix a point $x_0\in X$ and an open neighborhood $O=O_{f_0(x_0)}$ of $f_0(x_0)$. By our assumption, the point $x_0$ is contained in a dense sequentially $Y$-Ascoli subspace $Z\subseteq X$. The density of $Z$ in $X$ implies that the restriction operator
\[
\zeta:\CC(X,Y)\to \CC(Z,Y), \quad \zeta:g\mapsto g|_Z,
\]
is injective. Since the space $Z$ is sequentially $Y$-Ascoli, there is a neighborhood $W_0\subseteq Z$ of $x_0$ and $N\in\w$ such that
\begin{equation} \label{equ:seq-Ascoli-2}
\zeta(f_n)(x) \in O \; \mbox{ for every } x\in W_0 \mbox{ and each } n\geq N.
\end{equation}
The closure $\overline{W_0}$ of $W_0$ in $X$ is a (closed) neighborhood of $z$ in $X$, and (\ref{equ:seq-Ascoli-2})  implies
\[
f_n(x) \in \overline{O} \; \mbox{ for every } x\in \overline{W_0} \mbox{ and each } n\geq N.
\]
Taking into account that $x_0$ and  $O$ were arbitrary and $Y$ is Tychonoff, we obtain that $S$ is evenly continuous.
\smallskip

(v)$\Rightarrow$(i) follows from the easy fact that every equicontinuous subset of $\CC(X,Y)$ is evenly continuous.

(i)$\Rightarrow$(v)
Let $\KK$ be a convergent sequence in $\CC(X,Y)$. So $\KK$ is evenly continuous. Fix a point $x\in X$ and an open neighborhood $U$ of zero in $Y$. Choose an open neighborhood $V$ of $0\in Y$ such that $V+V\subseteq U$. For every $f\in\KK$, choose an open neighborhood $O_f\subseteq X$ of $x$ and an open neighborhood $U_f\subseteq \KK$ of $f$ such that
\begin{equation} \label{equ:even-cont-1}
g(t)\in f(x)+V \; \mbox{ for every } \; t\in O_f \; \mbox{ and } \; g\in U_f.
\end{equation}
Since $\KK$ is compact, there are $f_1,\dots,f_n\in\KK$ such that $\KK=\bigcup_{i=1}^n U_{f_i}$. Set $O:=\bigcap_{i=1}^n O_{f_i}$. Now if $t\in O$ and $g\in\KK$, choose $f_i$ such that $g\in  U_{f_i}$ and then
\[
g(t)-g(x)=\big( g(t)-f_i(x)\big) + \big( f_i(x)-g(x)\big)\stackrel{(\ref{equ:even-cont-1})}{\in} V+V\subseteq U.
\]
Thus $\KK$ is equicontinuous. \qed
\end{proof}

Let $X$ be a topological space. A family  $\{ A_i\}_{i\in I}$ of subsets of  $X$ is called {\em compact-finite} if for every compact $K\subseteq X$, the set $\{ i\in I: K\cap A_i\}$ is finite; and $\{ A_i\}_{i\in I}$ is called {\em locally finite} if for every point $x\in X$ there is a neighborhood $X$ of $x$ such that the set $\{ i\in I: U\cap A_i\}$ is finite. A family  $\{ B_i\}_{i\in I}$ of subsets of  $X$ is called {\em strongly} ({\em functionally}) {\em compact-finite} if for every $i\in I$, there is an (functionally) open neighborhood  $U_i$ of $B_i$ such that the family $\{ U_i\}_{i\in I}$ is compact-finite.

\begin{proposition} \label{p:seq-Ascoli-1}
Let $Y$ be an abelain topological group containing at least two points, and let $X$ be a   $Y$-Tychonoff space. Consider the following assertions:
\begin{enumerate}
\item[{\rm (i)}] every strongly compact-finite sequence of closed subsets of $X$ is locally finite;
\item[{\rm (ii)}] every strongly  functionally  compact-finite sequence of functionally closed subsets of $X$ is locally finite;
\item[{\rm (iii)}] $X$ is a sequentially $Y$-Ascoli space.
\end{enumerate}
Then {\rm (i)$\Rightarrow$(ii)$\Rightarrow$(iii)}. If in addition $X$ is $Y$-normal, then {\rm (iii)$\Rightarrow$(i)}.
\end{proposition}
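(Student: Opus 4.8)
The plan is to dispatch the trivial implication (i)$\Rightarrow$(ii) directly and to obtain the two substantial implications (ii)$\Rightarrow$(iii) and (iii)$\Rightarrow$(i) by contraposition, using Theorem \ref{t:seq-Ascoli} as a bridge: since $Y$ is an abelian topological group and $X$ is $Y$-Tychonoff, $X$ is sequentially $Y$-Ascoli if and only if every convergent sequence in $\CC(X,Y)$ is equicontinuous. As $\CC(X,Y)$ is a topological group, I may translate a convergent sequence $f_n\to f_0$ by $-f_0$ and note that equicontinuity is unaffected (because $f_0$ is continuous), so throughout I assume the limit is the constant function $\mathbf 0$; then $f_n\to\mathbf 0$ in the compact-open topology says exactly that for every compact $K\subseteq X$ and every neighborhood $V$ of $0$ one has $f_n(K)\subseteq V$ for all large $n$. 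The implication (i)$\Rightarrow$(ii) is immediate, since functionally open sets are open and functionally closed sets are closed, so a strongly functionally compact-finite sequence of functionally closed sets is in particular a strongly compact-finite sequence of closed sets, to which (i) applies.

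For (ii)$\Rightarrow$(iii), argue contrapositively: assume $X$ is not sequentially $Y$-Ascoli, so there is $f_n\to\mathbf 0$ in $\CC(X,Y)$ failing equicontinuity at some $x_0$; fix a neighborhood $U$ of $0$ witnessing this and a symmetric $W$ with $W+W\subseteq U$. Using that $Y$ is Tychonoff, choose continuous $\phi\colon Y\to[0,1]$ with $\phi(0)=0$ and $\phi\equiv 1$ on $Y\setminus W$, and set $h_n:=\phi\circ f_n$, $B_n:=\{x: h_n(x)\ge 1/2\}$ (functionally closed) and $U_n:=\{x: h_n(x)>1/4\}$ (functionally open), so $B_n\subseteq U_n$. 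Compact-finiteness of $\{U_n\}$ follows from $f_n\to\mathbf 0$ via the neighborhood $\phi^{-1}([0,1/4))$ of $0$: for compact $K$ one gets $h_n(K)\subseteq[0,1/4)$, hence $U_n\cap K=\emptyset$, for all large $n$. The key remaining point is that $\{B_n\}$ fails to be locally finite at $x_0$. Here I use a quantifier argument: since each $f_n$ is continuous, every finite initial segment is equicontinuous at $x_0$, so the failure witnessed by $U$ is realized at arbitrarily large indices, i.e.\ for each $N$ and each neighborhood $O$ of $x_0$ there are $n\ge N$ and $t\in O$ with $f_n(t)-f_n(x_0)\notin U$. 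As $f_n(x_0)\to 0$, for large $n$ we have $f_n(x_0)\in W$, and $f_n(t)\in W$ would force $f_n(t)-f_n(x_0)\in W+W\subseteq U$; hence $f_n(t)\notin W$, so $h_n(t)=1$ and $t\in B_n$. Since $N$ is arbitrary, every neighborhood of $x_0$ meets $B_n$ for infinitely many $n$, contradicting (ii).

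For (iii)$\Rightarrow$(i), assume $X$ is $Y$-normal and, contrapositively, that some strongly compact-finite sequence $\{B_n\}$ of closed sets, with compact-finite open neighborhoods $U_n\supseteq B_n$, fails to be locally finite at a point $x_0$. Fix $y_1\ne 0$ in $Y$ and a neighborhood $U$ of $0$ with $y_1\notin U$. The set $F_n:=B_n\cup(X\setminus U_n)$ is closed, and the map equal to $y_1$ on $B_n$ and to $0$ on $X\setminus U_n$ is continuous on $F_n$ with finite image (the two pieces are disjoint and clopen in $F_n$); by $Y$-normality it extends to $f_n\in\CC(X,Y)$ with $f_n(B_n)=\{y_1\}$ and $f_n(X\setminus U_n)=\{0\}$. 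Compact-finiteness of $\{U_n\}$ gives, for each compact $K$, that $K\subseteq X\setminus U_n$ and hence $f_n(K)=\{0\}$ for all large $n$, so $f_n\to\mathbf 0$; applied to $K=\{x_0\}$ it also gives $f_n(x_0)=0$ for all but finitely many $n$. Since $\{B_n\}$ is not locally finite at $x_0$, each neighborhood $O$ of $x_0$ meets $B_n$ for infinitely many $n$, so I may pick such an $n$ with $f_n(x_0)=0$ and some $t\in O\cap B_n$, whence $f_n(t)-f_n(x_0)=y_1\notin U$. Thus $\{f_n\}$ is convergent but not equicontinuous at $x_0$, so $X$ is not sequentially $Y$-Ascoli.

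I expect the main obstacle to be the bookkeeping that makes the two constructed objects match the definitions exactly. In (ii)$\Rightarrow$(iii) the delicate part is the quantifier argument guaranteeing that failure of equicontinuity is witnessed at arbitrarily large $n$, so that $\{B_n\}$ genuinely fails local finiteness rather than merely having finitely many inconvenient terms, together with the Tychonoff extraction of $\phi$ needed to produce honestly functionally open and functionally closed sets. In (iii)$\Rightarrow$(i) the subtlety is the combined use of $Y$-normality to realize the two-valued functions and of compact-finiteness at the singleton $\{x_0\}$ to force $f_n(x_0)=0$ cofinitely. Underlying everything is the reduction to the limit $\mathbf 0$, justified by $\CC(X,Y)$ being a topological group, which is what lets compact-open convergence be read off as the uniform-on-compacta statements used in both constructions.
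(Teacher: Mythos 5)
Your proof is correct and follows essentially the same route as the paper: (i)$\Rightarrow$(ii) is dispatched by noting functionally open/closed sets are open/closed, and both substantial implications are proved by contraposition through the equivalence (i)$\Leftrightarrow$(v) of Theorem \ref{t:seq-Ascoli}, with the same Urysohn-type and $Y$-normality constructions of witnessing sequences. The differences are only technical bookkeeping: where the paper takes $A_n=\{x\in X: f_n(x)-f_n(x_0)\in Y\setminus W\}$ with $W$ functionally open and reads non-local-finiteness off $x_0\in\overline{\bigcup_{n}A_n}\setminus\bigcup_{n}A_n$, you route everything through a $[0,1]$-valued function $\phi$ (which makes functional closedness/openness of $B_n$ and $U_n$ transparent) together with an explicit tail quantifier argument, and in (iii)$\Rightarrow$(i) you force $f_n(x_0)=0$ cofinitely via compact-finiteness at the singleton $\{x_0\}$ in order to refute equicontinuity, whereas the paper refutes even continuity of $\{f_n\}_{n\in\w}\cup\{\yyy\}$ directly and so never needs to control $f_n(x_0)$.
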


\begin{proof}
The implication (i)$\Rightarrow$(ii) is trivial.

(ii)$\Rightarrow$(iii) Suppose for a contradiction that $X$ is not sequentially $Y$-Ascoli. Then, by the equivalence (i)$\Leftrightarrow$(v) of Theorem \ref{t:seq-Ascoli}, there exists a null-sequence $\{ f_n:n\in\w\}$ in $\CC(X,Y)$ which is not equicontinuous at some point $x_0\in X$. So there is a functionally open neighborhood $W$ of zero in $Y$ such that the closed sets
\[
A_n := \{ x \in X: f_n(x)-f_n(x_0) \in Y\SM W\} \quad (n\in\w)
\]
satisfy the condition
$
x_0\in \overline{\bigcup_{n\in\w} A_n} \setminus \bigcup_{n\in\w} A_n,
$
i.e. the family $\{ A_n:n\in\w\}$ is not locally finite. Observe also that the sets $A_n$ are functionally closed in $X$. Choose a functionally open neighborhood $W_0\subseteq Y$ of zero such that $\overline{W_0} +\overline{W_0} \subseteq W$. For every $n\in\w$, set
\[
U_n:=\{ x \in X: f_n(x)-f_n(x_0)\in  Y\SM \overline{W_0}\},
\]
so  $U_n$ is a functionally open neighborhood of $A_n$. Hence to get a contradiction it remains to show that the family $\UU=\{ U_n:n\in\w\}$ is compact-finite. Let $K$ be a compact subset of $X$. Since $f_n\to \mathbf{0}$ in the compact-open topology, there exists an $N\in \w$ such that $f_n\in [K\cup\{x_0\};W_1]$ for every $n\geq N$, where the open symmetric neighborhood $W_1\subseteq Y$ of $0$ is chosen so that $\overline{W_1} +\overline{W_1} \subseteq W_0$. Therefore, for every $n\geq N$ and each $x\in K$, we have
$
f_n(x)-f_n(x_0)\in W_1 +W_1 \subseteq W_0
$
and hence $U_n \cap K=\emptyset$. Thus $\UU$ is compact-finite.

(iii)$\Rightarrow$(i) Assume that $X$ is a $Y$-normal space. Fix two distinct points $y,z\in Y$, and choose an open neighborhood $V$ of $y$ such that $z\not\in V$. Suppose for a contradiction that there exists a  strongly compact-finite sequence $\{ A_n:n\in\w\}$ of closed subsets  in $X$ which is not locally finite. For every $n\in\w$, choose an  open neighborhood $U_n$ of $A_n$ such that the sequence $\UU=\{ U_n:n\in\w\}$ is compact-finite. Since $X$ is $Y$-normal, for every $n\in\w$ there is a continuous function $f_n:X \to Y$ such that $f_n(A_n) =\{ z\}$ and $f_n(X\setminus U_n)\subseteq \{ y\}$. Since $\UU$ is compact-finite, we obtain that $f_n \to \yyy$ in $\CC(X,Y)$. As $\{ A_n:n\in\w\}$ is not locally-finite, there exists a point $x_0\in X$ such that,  for every neighborhood $U$ of $x_0$, the set $\{ n\in\w: A_n \cap U\not= \emptyset\}$ is infinite. Hence,  for every neighborhood $U$ of $x_0$, there is an $n\in\w$ such that $f_n(x)=z$ for some $x\in A_n$. Therefore the convergent sequence $\{ f_n:n\in\w\}\cup\{ \yyy\}$ is not evenly continuous at $(x_0,\yyy)$. Thus $X$ is not sequentially $Y$-Ascoli, a contradiction.\qed
\end{proof}

If $Y=\IR$, we have the following characterization of sequentially Ascoli spaces.

\begin{theorem}\label{t:seq-R-Ascoli}
Let $X$ be a Tychonoff space, and let $\delta: X\to C_s\big(\CC(X)\big)$ be the canonical map. Then the following assertions are equivalent:
\begin{enumerate}
\item[{\rm (i)}] $X$ is a sequentially $\IR$-Ascoli space;
\item[{\rm (ii)}] $\delta$ is continuous;
\item[{\rm (iii)}]  $\delta$ is an embedding;
\item[{\rm (iv)}] each point $x\in X$ is contained in a dense sequentially Ascoli subspace of $X$;
\item[{\rm (v)}] every convergent sequence in $\CC(X)$ is equicontinuous, i.e. $X$ is sequentially Ascoli;
\item[{\rm (vi)}] every strongly  functionally  compact-finite sequence of functionally closed subsets of $X$ is locally finite.
\end{enumerate}
\end{theorem}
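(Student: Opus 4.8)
The plan is to derive the whole theorem by specializing the two general results already proved, namely Theorem~\ref{t:seq-Ascoli} and Proposition~\ref{p:seq-Ascoli-1}, to the group $Y=\IR$, and then to supply by hand the single implication for which the general machinery asks for a hypothesis that $X$ need not satisfy. First I would record that $\IR$ is a Tychonoff abelian topological group with more than one point, that $C_s(\CC(X))=C_s(\CC(X),\IR)$, and that every Tychonoff space $X$ is $\IR$-Tychonoff (this is just complete regularity applied to a finite set of points together with a disjoint closed set). With these observations Theorem~\ref{t:seq-Ascoli} applies verbatim and yields at once the equivalence of (i), (ii), (iii), (iv) and (v); indeed clause (v) of that theorem, ``every convergent sequence in $\CC(X,\IR)$ is equicontinuous'', is literally clause (v) of the present statement.

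It then remains only to fold (vi) into this chain. One direction is free: taking $Y=\IR$ in Proposition~\ref{p:seq-Ascoli-1}, its implication (ii)$\Rightarrow$(iii) is exactly (vi)$\Rightarrow$(i). The difficulty is the converse (v)$\Rightarrow$(vi), because the matching implication (iii)$\Rightarrow$(i) of that proposition was established only under the extra assumption that $X$ be $\IR$-normal, which a general Tychonoff space is not. So the heart of the argument is to re-run the proof of that implication using complete regularity alone.

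Concretely, I would argue by contraposition. Suppose $\{A_n:n\in\w\}$ is a strongly functionally compact-finite sequence of functionally closed sets that fails to be locally finite, and fix functionally open $U_n\supseteq A_n$ with $\{U_n\}$ compact-finite. The key point, and the reason the hypothesis is phrased with the word ``functionally'', is that $A_n$ and $X\SM U_n$ are now disjoint \emph{zero-sets}; and in any Tychonoff space two disjoint zero-sets are completely separated (if $A_n=g^{-1}(0)$ and $X\SM U_n=h^{-1}(0)$, the function $|h|/(|g|+|h|)$ does the job), with no appeal to normality. Hence I obtain continuous $f_n\colon X\to[0,1]$ with $f_n(A_n)=\{1\}$ and $f_n(X\SM U_n)=\{0\}$. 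Compact-finiteness of $\{U_n\}$ forces $f_n\to\mathbf{0}$ in $\CC(X)$, since each compact $K$ meets only finitely many $U_n$ and thus $f_n$ vanishes on $K$ for large $n$. Finally, choosing a point $x_0$ witnessing the failure of local finiteness and noting that $x_0\in U_n$ for only finitely many $n$ (again by compact-finiteness, as $\{x_0\}$ is compact, so $f_n(x_0)=0$ eventually), every neighborhood of $x_0$ contains a point of some $A_n$ of large index $n$, at which $f_n$ takes the value $1$ while $f_n(x_0)=0$. This destroys equicontinuity of $\{f_n\}\cup\{\mathbf{0}\}$ at $x_0$, contradicting (v).

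I expect the only real obstacle to be precisely this replacement of $\IR$-normality by complete regularity; once one observes that the functionally closed/functionally open framing of (vi) turns the required separations into separations of disjoint zero-sets, the rest is a faithful transcription of the proof of Proposition~\ref{p:seq-Ascoli-1}. Everything else is bookkeeping inherited directly from Theorem~\ref{t:seq-Ascoli}.
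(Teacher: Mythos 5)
Your proposal is correct and follows essentially the same route as the paper: the equivalences (i)--(v) are obtained by specializing Theorem~\ref{t:seq-Ascoli} to $Y=\IR$, (vi)$\Rightarrow$(i) comes from Proposition~\ref{p:seq-Ascoli-1}, and (v)$\Rightarrow$(vi) is proved by rerunning the normality-based argument with the observation that $A_n$ and $X\SM U_n$ are disjoint zero-sets. The only cosmetic difference is that you separate the zero-sets by the explicit formula $|h|/(|g|+|h|)$, whereas the paper cites Theorem~1.5.13 of \cite{Eng}, which is precisely this fact; you also make explicit the detail that $f_n(x_0)=0$ eventually, which the paper leaves implicit.
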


\begin{proof}
Since a topological space is Tychonoff if and only if it is $\IR$-Tychonoff, the equivalences (i)-(v) follow from Theorem \ref{t:seq-Ascoli}. The implication (vi)$\Rightarrow$(i) follows from Proposition \ref{p:seq-Ascoli-1}.

(v)$\Rightarrow$(vi)  
Suppose for a contradiction that there exists a  strongly  functionally  compact-finite sequence of functionally closed subsets $\{ A_n:n\in\w\}$ in $X$ which is not locally finite. For every $n\in\w$, choose a functionally  open neighborhood $U_n$ of $A_n$ such that the sequence $\UU=\{ U_n:n\in\w\}$ is compact-finite. Since $X$ is Tychonoff,  Theorem 1.5.13 of \cite{Eng} implies that for every $n\in\w$ there is a continuous function $f_n:X \to [0,1]$ such that $f_n(A_n) =\{ 1\}$ and $f_n(X\setminus U_n) =\{ 0\}$. Since $\UU$ is compact-finite, we obtain that $f_n \to \mathbf{0}$ in $\CC(X)$. As $\{ A_n:n\in\w\}$ is not locally-finite, there exists a point $x_0\in X$ such that,  for every neighborhood $U$ of $x_0$, the set $\{ n\in\w: A_n \cap U\not= \emptyset\}$ is infinite. Hence,  for every neighborhood $U$ of $x_0$, there is an $n\in\w$ such that $f_n(x)=1$ for some $x\in A_n$. Therefore the convergent sequence $\{ f_n:n\in\w\}\cup\{ \mathbf{0}\}$ is not equicontinuous at the point $x_0$. \qed
\end{proof}

Now we establish some elementary properties of sequentially $Y$-Ascoli spaces. Recall that a subspace $Z$ of a topological space $X$ is a {\em retract of $X$} if there is a continuous map $r:X\to Z$ such that $r(z)=z$ for all $z\in Z$. The proof of the next proposition is actually a partial case (when a compact set is a convergent sequence) of Proposition 5.2 of \cite{BG} and hence is omitted.

\begin{proposition} \label{p:seq-AscoliCat}
Let $Y$ be a Tychonoff space.
\begin{enumerate}
\item[{\rm (i)}] If $X$ is a sequentially $Y$-Ascoli space, then each retract $Z$ of $X$ is sequentially $Y$-Ascoli.
\item[{\rm (ii)}] For any family $\{  X_i\}_{i\in I}$ of sequentially $Y$-Ascoli spaces the topological sum $X:=\bigoplus_{i\in I} X_i$ is sequentially $Y$-Ascoli.
\item[{\rm (iii)}] Each sequentially $Y$-Ascoli space $X$ is sequentially $Y^\kappa$-Ascoli for every cardinal $\kappa$.
\item[{\rm (iv)}] Each sequentially $Y$-Ascoli space $X$ is sequentially $Z$-Ascoli for every subspace $Z\subseteq Y$.
\end{enumerate}
\end{proposition}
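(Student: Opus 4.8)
The plan is to treat all four clauses uniformly, exploiting the functoriality of the compact-open topology together with the reduction recorded just before the statement: a convergent sequence $S\subseteq\CC(X,Y)$ need only be tested for even continuity at pairs $(f_0,x)$, where $f_0$ is its limit. I will use two standard facts. Composition with a continuous map $\psi:X'\to X$ induces a continuous map $\CC(X,Y)\to\CC(X',Y)$, $g\mapsto g\circ\psi$, and composition with a continuous map $\phi:Y\to Y'$ induces a continuous map $\CC(X,Y)\to\CC(X,Y')$, $g\mapsto\phi\circ g$. In particular, both carry a convergent sequence to a convergent sequence with the expected limit. The whole argument is then to transport the given convergent sequence through such a map into a space where even continuity is already available, and to pull the witnessing neighbourhoods back.

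For (i), let $r:X\to Z$ be a retraction and let $S=\{g_n\}_{n\in\w}$, $g_n\to g_0$, be a convergent sequence in $\CC(Z,Y)$. Composition with $r$ sends $S$ to the convergent sequence $\{g_n\circ r\}$ in $\CC(X,Y)$, which is evenly continuous because $X$ is sequentially $Y$-Ascoli. Fix $z_0\in Z$ and a neighbourhood $U$ of $g_0(z_0)$ in $Y$; even continuity at $(g_0\circ r,z_0)$ yields a neighbourhood $O\subseteq X$ of $z_0$ and a tail $\{g_n:n>N\}$ with $(g_n\circ r)(t)\in U$ for all $t\in O$ and $n>N$. Since $r{\restriction}_Z=\id_Z$, for $t\in O\cap Z$ we have $(g_n\circ r)(t)=g_n(t)$, so $O\cap Z$ witnesses even continuity of $S$ at $(g_0,z_0)$. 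For (iv), the inclusion $Z\hookrightarrow Y$ induces a continuous map $\CC(X,Z)\to\CC(X,Y)$ carrying a convergent sequence $\{f_n\}\to f_0$ in $\CC(X,Z)$ to a convergent sequence in $\CC(X,Y)$, which is evenly continuous. Given a neighbourhood $U$ of $f_0(x_0)$ in $Z$, write $U=U'\cap Z$ with $U'$ open in $Y$; even continuity in $Y$ produces a neighbourhood and a tail with values in $U'$, and since every $f_n$ takes values in $Z$, those values lie in $U'\cap Z=U$.

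Clauses (ii) and (iii) are the product-type statements and rest on a finite-support observation. For (ii), each summand $X_i$ is clopen in $X=\bigoplus_{i\in I}X_i$, so restriction $\CC(X,Y)\to\CC(X_i,Y)$ is continuous; a point $x_0\in X$ lies in a unique $X_{i_0}$, and a convergent sequence $S=\{f_n\}\to f_0$ in $\CC(X,Y)$ restricts to a convergent sequence in $\CC(X_{i_0},Y)$, which is evenly continuous since $X_{i_0}$ is sequentially $Y$-Ascoli. A witnessing neighbourhood of $x_0$ inside $X_{i_0}$ is already a neighbourhood in $X$, and for such points the values of $f_n$ and of its restriction agree, giving even continuity of $S$ at $(f_0,x_0)$. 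For (iii), each coordinate projection $\pi_\alpha:Y^\kappa\to Y$ induces a continuous map $\CC(X,Y^\kappa)\to\CC(X,Y)$, so every coordinate of a convergent sequence $\{f_n\}\to f_0$ in $\CC(X,Y^\kappa)$ converges in $\CC(X,Y)$ and is evenly continuous. A basic neighbourhood $W=\prod_{\alpha}W_\alpha$ of $f_0(x_0)$ in $Y^\kappa$ constrains only finitely many coordinates $\alpha\in F$; for each such $\alpha$, even continuity of the $\alpha$-th coordinate sequence supplies a neighbourhood $O_\alpha$ of $x_0$ and an index $N_\alpha$, and then $O:=\bigcap_{\alpha\in F}O_\alpha$ together with $N:=\max_{\alpha\in F}N_\alpha$ forces $f_n(t)\in W$ for $t\in O$ and $n>N$.

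None of the steps is deep; the only points requiring care are the bookkeeping of even continuity purely at the limit point and the finite-support arguments in (ii) and (iii), where one must note that the topological sum and the power interact with compact sets and with basic neighbourhoods so that only finitely many summands (resp. coordinates) are ever active. This is precisely why, as remarked in the text, the whole statement is a specialisation of the compact-set version in \cite{BG}, a convergent sequence being a particularly simple compact set with a single non-isolated point.
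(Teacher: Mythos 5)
Your proof is correct and is essentially the argument the paper has in mind: the paper omits the proof, noting that the proposition is the special case of Proposition 5.2 of \cite{BG} in which the compact set is a convergent sequence, and your functorial transport of convergent sequences through $g\mapsto g\circ r$, restriction to a clopen summand, postcomposition with projections $\pi_\alpha$, and the inclusion $Z\hookrightarrow Y$ is exactly that specialization. You also correctly invoke (and actually need) the reduction recorded in the paper just before the statement, namely that even continuity of a convergent sequence need only be checked at the limit function, so nothing is missing.
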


Since any (zero-dimensional) Tychonoff space embeds into some power $\mathbb{R}^\kappa$ (respectively, $2^\kappa$), Theorem \ref{t:seq-R-Ascoli} and Proposition \ref{p:seq-AscoliCat} imply
\begin{corollary}\label{l:seq-AscoliCat}
\begin{enumerate}
\item[{\rm (i)}] If $X$ is a sequentially Ascoli space, then $X$ is sequentially $Y$-Ascoli for every Tychonoff space $Y$.
\item[{\rm (ii)}] If $X$ is a  sequentially  $2$-Ascoli space, then $X$ is  sequentially  $Y$-Ascoli for every zero-dimensional $T_1$-space $Y$.
\end{enumerate}
\end{corollary}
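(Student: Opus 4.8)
The plan is to reduce everything to the scalar case settled in Theorem~\ref{t:seq-R-Ascoli}, lift it to a power by Proposition~\ref{p:seq-AscoliCat}(iii), and then pull it back down to $Y$ by Proposition~\ref{p:seq-AscoliCat}(iv) after realizing $Y$ as a subspace of that power. This is exactly the strategy announced in the sentence preceding the statement, so the work is almost entirely bookkeeping around two classical embedding facts.

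For (i) I would first invoke the equivalence (i)$\Leftrightarrow$(v) of Theorem~\ref{t:seq-R-Ascoli} to rewrite the hypothesis ``$X$ is sequentially Ascoli'' as ``$X$ is sequentially $\IR$-Ascoli''. Applying Proposition~\ref{p:seq-AscoliCat}(iii) with target $\IR$ then gives that $X$ is sequentially $\IR^\kappa$-Ascoli for every cardinal $\kappa$. Next, for an arbitrary Tychonoff space $Y$, I would use the classical Tychonoff embedding theorem to realize $Y$ as a subspace $Z\subseteq\IR^\kappa$ (taking $\kappa=|C(Y,\IR)|$ and the diagonal of all continuous real-valued functions on $Y$). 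Since $Z\subseteq\IR^\kappa$, Proposition~\ref{p:seq-AscoliCat}(iv) yields that $X$ is sequentially $Z$-Ascoli, and because being sequentially $Y$-Ascoli is defined through the function space $\CC(X,Y)$ it depends only on the homeomorphism type of the target; the homeomorphism $Y\cong Z$ then finishes the claim.

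For (ii) I would run the identical argument with $\IR$ replaced by the two-point discrete space $2$ and $\IR^\kappa$ by the Cantor cube $2^\kappa$. The only step that changes is the embedding: a zero-dimensional $T_1$-space $Y$ has a base $\BB$ of clopen sets, and the diagonal of the characteristic functions $\{\chi_B:B\in\BB\}$ embeds $Y$ into $2^{\BB}$, these maps being continuous precisely because each $B$ is clopen and the family being injective and relatively open because $\BB$ is a base. I do not expect any genuine obstacle here: all the substance already lives in Theorem~\ref{t:seq-R-Ascoli} and Proposition~\ref{p:seq-AscoliCat}. The only things to verify are that the powers $\IR^\kappa$ and $2^\kappa$ are Tychonoff, so that the definitions apply (they obviously are), and the homeomorphism-invariance of the target, which I would justify in one line by noting that a homeomorphism $h:Z\to Y$ induces a homeomorphism $\CC(X,Z)\to\CC(X,Y)$, $g\mapsto h\circ g$, carrying evenly continuous convergent sequences to evenly continuous convergent sequences.
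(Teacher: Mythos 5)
Your proof is correct and follows essentially the same route as the paper, which derives the corollary in one line from Theorem \ref{t:seq-R-Ascoli} and Proposition \ref{p:seq-AscoliCat} combined with the embeddings $Y\hookrightarrow\IR^\kappa$ (respectively $Y\hookrightarrow 2^\kappa$); your write-up just makes the bookkeeping (the diagonal embedding for zero-dimensional $T_1$-spaces and the homeomorphism-invariance of the target) explicit. One cosmetic remark: in part (ii) no analogue of Theorem \ref{t:seq-R-Ascoli} is needed, since ``sequentially $2$-Ascoli'' is already by definition the sequentially $Y$-Ascoli notion for $Y=\mathbf{2}$, so Proposition \ref{p:seq-AscoliCat}(iii)--(iv) alone suffice there.
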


In what follows we shall use repeatedly the following  proposition.
\begin{proposition}[\cite{Gabr:weak-bar-L(X),GKP}] \label{p:Ascoli-sufficient}
Assume that for a set $I$ (respectively, $I=\w$), the Tychonoff space  $X$ admits a  family $\U =\{ U_i : i\in I\}$ of open sets, a subset $A=\{ a_i : i\in I\} \subseteq X$ and a point $z\in X$ such that
\begin{enumerate}
\item[{\rm (i)}] $a_i\in U_i$ for every $i\in I$;
\item[{\rm (ii)}] $\big|\{ i\in I: C\cap U_i\not=\emptyset \}\big| <\infty$  for each compact subset $C$ of $X$;
\item[{\rm (iii)}] $z$ is a cluster point of $A$.
\end{enumerate}
Then $X$ is not a (sequentially) Ascoli space.
\end{proposition}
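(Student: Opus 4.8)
The plan is to manufacture, directly from the data $\U$, $A$ and $z$, a compact subset of $\CC(X)$ (a convergent sequence when $I=\w$) that fails to be evenly continuous at the pair $(\mathbf{0},z)$. First I would use the hypothesis that $X$ is Tychonoff to attach a bump function to each index: for every $i\in I$, since $a_i\in U_i$ by (i) and $U_i$ is open, complete regularity yields a continuous map $f_i:X\to[0,1]$ with $f_i(a_i)=1$ and $f_i(X\SM U_i)=\{0\}$, so that $f_i$ vanishes off $U_i$. I then set $\KK:=\{f_i:i\in I\}\cup\{\mathbf{0}\}$ and regard it as a subset of $\CC(X)$.

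Second, I would verify that $\KK$ is compact in $\CC(X)$, with $\mathbf{0}$ as the only possible accumulation point. The engine here is condition (ii). A basic neighborhood of $\mathbf{0}$ may be taken of the form $[C;\e]$ with $C\subseteq X$ compact (replace a finite intersection $\bigcap_j[C_j;\e_j]$ by $[\bigcup_jC_j;\min_j\e_j]$). By (ii) only finitely many indices $i$ satisfy $U_i\cap C\neq\emptyset$, and for every other index $f_i$ vanishes on $C$, whence $f_i\in[C;\e]$. Thus \emph{any} open set of $\CC(X)$ meeting $\KK$ at $\mathbf{0}$ already contains all but finitely many of the $f_i$; from an arbitrary open cover of $\KK$ one member covers $\mathbf{0}$ together with all but finitely many $f_i$, and finitely many further members cover the remainder, giving a finite subcover. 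When $I=\w$ the same computation shows $f_n\to\mathbf{0}$ in the compact-open topology, so $\KK$ is then a convergent sequence.

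Third, I would contradict even continuity at $(\mathbf{0},z)$. Since $\mathbf{0}(z)=0$, fix the target neighborhood $(-\tfrac12,\tfrac12)$ of $0$. Given any neighborhood $V$ of $\mathbf{0}$ in $\KK$ and any neighborhood $O$ of $z$ in $X$, the description of neighborhoods of $\mathbf{0}$ established above shows that $V$ omits only finitely many of the $f_i$, while condition (iii) guarantees that $O$ contains $a_i$ for infinitely many $i$. Choosing an index $i$ with both $a_i\in O$ and $f_i\in V$ gives $f_i(a_i)=1\notin(-\tfrac12,\tfrac12)$, so the evaluation map is discontinuous at $(\mathbf{0},z)$. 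Hence $\KK$ is a compact set that is not evenly continuous, so $X$ is not Ascoli; and for $I=\w$ the convergent sequence $\KK$ is not evenly continuous, so $X$ is not sequentially Ascoli by the equivalence (i)$\Leftrightarrow$(v) of Theorem \ref{t:seq-R-Ascoli}.

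The construction of the $f_i$ and the convergence $f_n\to\mathbf{0}$ are routine; the one step demanding care is the compactness argument for uncountable $I$, where one must \emph{not} assume the $f_i$ are isolated in $\KK$ but instead exploit the cofinite behaviour near $\mathbf{0}$ supplied by (ii). I would also pin down the reading of ``cluster point'' used in (iii): it must mean that every neighborhood of $z$ captures $a_i$ for infinitely many $i$, which is precisely the property consumed in the final contradiction.
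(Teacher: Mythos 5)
Your proof is correct and takes essentially the same route as the paper's (the proposition is cited from \cite{Gabr:weak-bar-L(X),GKP}, but your argument is exactly the Urysohn-function analogue of the paper's own proof of Proposition \ref{p:2-Ascoli-sufficient}): build $\KK=\{f_i:i\in I\}\cup\{\mathbf{0}\}$, use (ii) to show every neighborhood of $\mathbf{0}$ in $\CC(X)$ absorbs all but finitely many $f_i$ (giving compactness, resp.\ convergence to $\mathbf{0}$ when $I=\w$), and use (iii) with $f_i(a_i)=1$ to defeat even continuity at $(\mathbf{0},z)$. Your two cautionary remarks are also right: the cofinite-absorption argument is what makes the compactness proof work for uncountable $I$, and since $X$ is $T_1$ the cluster-point hypothesis does yield infinitely many indices $i$ with $a_i\in O$, which is precisely what the final step consumes.
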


Analogously we have the following result. The characteristic function of a subset $A$ of a set $\Omega$ is denoted by $\mathbf{1}_A$.
\begin{proposition} \label{p:2-Ascoli-sufficient}
Assume that for a set $I$ (respectively, $I=\w$), the Tychonoff space  $X$ admits a  family $\U =\{ U_i : i\in I\}$ of clopen sets and a point $z\in X$ such that
\begin{enumerate}
\item[{\rm (i)}] $\big|\{ i\in I: C\cap U_i\not=\emptyset \}\big| <\infty$  for each compact subset $C$ of $X$;
\item[{\rm (ii)}] $z$ is a cluster point of $\bigcup_{i\in I} U_i$.
\end{enumerate}
Then $X$ is not a (sequentially) $2$-Ascoli space.
\end{proposition}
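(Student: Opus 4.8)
The plan is to prove the statement constructively: using the family $\U$ I will manufacture a convergent sequence (when $I=\w$), respectively a compact subset (for an arbitrary index set $I$), of $\CC(X,2)$ which is not evenly continuous at the point $z$, so that $X$ fails to be (sequentially) $2$-Ascoli directly from the definition. This is the $2$-valued analogue of the implication (v)$\Rightarrow$(vi) of Theorem~\ref{t:seq-R-Ascoli} and is directly analogous to Proposition~\ref{p:Ascoli-sufficient}; the essential simplification is that, because the $U_i$ are clopen, their characteristic functions $f_i:=\mathbf{1}_{U_i}\colon X\to 2$ are already continuous, so no separation axiom is needed to produce the test functions.

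First I would record that $f_i\to\mathbf{0}$ in $\CC(X,2)$. A basic neighbourhood of $\mathbf{0}$ is $[C;0]$ with $C\subseteq X$ compact, and $f_i\in[C;0]$ exactly when $U_i\cap C=\emptyset$; by hypothesis (i) this fails for only finitely many $i$, so every neighbourhood of $\mathbf{0}$ contains all but finitely many of the $f_i$. For $I=\w$ this already says that $\{f_n\}_{n\in\w}$ is a sequence converging to $\mathbf{0}$. For a general index set $I$ the same computation shows that $\KK:=\{f_i:i\in I\}\cup\{\mathbf{0}\}$ is compact: given an open cover, the member containing $\mathbf{0}$ absorbs all but finitely many of the $f_i$, and the remaining finitely many are covered by finitely many further members.

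The heart of the argument is to extract from (ii) the family-wise statement that \emph{every neighbourhood $O_z$ of $z$ meets $U_i$ for infinitely many $i$} (equivalently, that $\U$ is not locally finite at $z$). Here the clopen hypothesis is decisive: if some neighbourhood $O_z$ met only the sets indexed by a finite set $G$, then $W_G:=\bigcup_{i\in G}U_i$ would again be clopen, so $O_z\setminus W_G$ would be a neighbourhood of $z$ disjoint from $\bigcup_{i\in I}U_i$, contradicting that $z$ clusters at this union. One first discards, via (i) applied to the compact set $\{z\}$, the finitely many $U_i$ that contain $z$, so that $W_G$ may indeed be separated from $z$; this reduction is exactly the delicate point, and it is where the content of (ii) that $z$ is genuinely approached by infinitely many members of $\U$, rather than captured by finitely many clopen pieces, must be used.

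Finally I would verify the failure of even continuity at $(\mathbf{0},z)$. Since $2$ is discrete and $\mathbf{0}(z)=0$, even continuity there would yield a neighbourhood $V$ of $\mathbf{0}$ in $\KK$ (resp. in the sequence) and a neighbourhood $O_z$ of $z$ with $g(t)=0$ for all $g\in V$ and $t\in O_z$. But $V$ contains $f_i$ for all but finitely many $i$ by the second paragraph, while by the key step $O_z\cap U_m\neq\emptyset$ for infinitely many $m$; choosing such an $m$ with $f_m\in V$ and a point $t\in O_z\cap U_m$ gives $f_m(t)=1\neq 0$, the desired contradiction. Thus $\KK$ (resp. $\{f_n\}$) is a compact (resp. convergent) subset of $\CC(X,2)$ that is not evenly continuous, so $X$ is not a (sequentially) $2$-Ascoli space. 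I expect the main obstacle to be precisely the key step of the third paragraph: converting the cluster-point condition (ii) on the single set $\bigcup_i U_i$ into non-local-finiteness of the family $\{U_i\}$ at $z$, which relies on clopen-ness through the fact that a finite union of clopen sets can be peeled away from any point lying outside it.
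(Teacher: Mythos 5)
Your witness is exactly the paper's: it also sets $\KK:=\{\mathbf{1}_{U_i}:i\in I\}\cup\{\mathbf{0}\}$ and uses (i) by the same computation to see that $\KK$ is compact (resp.\ that $\mathbf{1}_{U_n}\to\mathbf{0}$ when $I=\w$). Where you genuinely diverge is the verification at $z$. The paper does not refute even continuity from the definition; it refutes \emph{equicontinuity} of $\KK$ at $z$, which suffices because for group-valued targets an evenly continuous convergent sequence (and, by the same argument, an evenly continuous compact set) is equicontinuous --- this is the content of (i)$\Leftrightarrow$(v) of Theorem~\ref{t:seq-Ascoli}. Refuting equicontinuity is much cheaper: since equicontinuity quantifies over \emph{all} $f\in\KK$ simultaneously, one only needs that every neighbourhood of $z$ meets \emph{some} $U_i$ with $z\notin U_i$, which is immediate from (ii); no ``infinitely many indices'' statement is required. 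Because you attack even continuity directly, where the neighbourhood of $\mathbf{0}$ inside $\KK$ may exclude finitely many $\mathbf{1}_{U_i}$, you are forced to prove the stronger claim that every neighbourhood of $z$ meets infinitely many $U_i$, and your clopen-peeling argument (a finite union $W_G=\bigcup_{i\in G}U_i$ is clopen and can be removed from $O_z$) is a correct and genuinely different ingredient, a second use of clopenness that the paper never needs. What your route buys is self-containedness --- you never invoke the equicontinuity equivalence; what it costs is precisely the extra lemma.

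The one step that does not survive scrutiny is your ``discard'' of the finitely many $U_i$ containing $z$: after removing them, hypothesis (ii) need not persist for the reduced family, and no repair is possible, because the proposition read literally fails when $z$ lies in some $U_i$. Indeed, let $X$ be a convergent sequence with limit $z$, $I=\{0\}$ and $U_0=X$: then (i) and (ii) hold, yet $X$ is compact metrizable and $C_k(X,\mathbf{2})$ is discrete, so every compact subset of it is finite and equicontinuous, and $X$ \emph{is} (sequentially) $2$-Ascoli. So the statement tacitly assumes $z\notin\bigcup_{i\in I}U_i$ (i.e.\ (ii) should be read as $z\in\overline{\bigcup_i U_i}\SM\bigcup_i U_i$); note that the paper's own one-line claim ``by (ii), the family $\KK$ is not equicontinuous at $z$'' relies on this tacit assumption just as much as you do, and it is satisfied in the applications (in Proposition~\ref{p:square-non-Ascoli} the sets $U_{n,\alpha,\beta}$ are singletons avoiding $z$). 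Under that reading your peeling argument goes through with no reduction at all, since $z\notin W_G$ is automatic, and your proof is then complete; the honest fix is therefore to record the assumption $z\notin\bigcup_i U_i$ and delete the discard step, rather than to attempt the reduction, which cannot be made to work as stated.
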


\begin{proof}
We have to find a compact subset (or a convergent sequence) $K$ in $\CC(X,\mathbf{2})$ which is not equicontinuous. Since all $U_i$ are clopen, for every $i\in I$, the characteristic function $\mathbf{1}_{U_i}$ is continuous. Now we define $K:=\{ \mathbf{1}_{U_i}:i\in I\}\cup\{\mathbf{0}\}$. By (ii), the family $K$ is not equicontinuous at the point $z$. So, to show that $X$ is not a (sequentially) $2$-Ascoli space, it suffices to prove that $K\subseteq \CC(X,\mathbf{2})$ is compact (or converges to $\mathbf{0}$, respectively). Fix an arbitrary standard neighborhood $[C;0]$ of $\mathbf{0}\in \CC(X,\mathbf{2})$. Then, by (i), the set $J:=\{ i\in I: C\cap U_i\not=\emptyset \}$ is finite. Therefore, for every $i\not\in  J$ we have $\mathbf{1}_{U_i}\in [C;0]$. Thus $K$ is compact (or converges to $\mathbf{0}$). \qed
\end{proof}

\begin{example} \label{exa:subspace-k-FU-non-Ascoli} {\em
A closed subspace of a $\kappa$-Fr\'{e}chet--Urysohn space can be not a sequentially Ascoli space. Indeed, let $A$ be a countable subset on the unit sphere of $\ell_1$ constructed in Proposition 4.1 of \cite{GKP}. It immediately follows from \cite[Proposition~4.1]{GKP} and Proposition \ref{p:Ascoli-sufficient} that the space  $A$ is not sequentially Ascoli. On the other hand, the space $A$ being Lindel\"{o}f is homeomorphic to a closed subspace of some product $\IR^\lambda$. It remains to note that  $\IR^\lambda$ is a $\kappa$-Fr\'{e}chet--Urysohn space by Corollary 2.3 of \cite{Gabr-B1}.\qed}
\end{example}

\begin{example} \label{exa:product-non-Ascoli} {\em
A product of a Polish space and a sequential $k_\w$-space can be not sequentially Ascoli. Indeed, let $\phi$ be the inductive limit of finite-dimensional vector spaces. It is well known that the space $\phi$ is a sequential non-Fr\'{e}chet--Urysohn $k_\w$-space (see for example Proposition 2.1 in \cite{Gab-LF}). Taking into account Proposition  \ref{p:Ascoli-sufficient}, in Theorem 1.2 of \cite{Gab-LF} it is proved that the product $\ell_2\times \phi$ is not a sequentially Ascoli space. In particular, the three space property does not hold in the class of (sequentially) Ascoli spaces.\qed}
\end{example}

Gruenhage and Tanaka proved that the square $V(\w_1)\times V(\w_1)$ of the Fr\'{e}chet--Urysohn fan has uncountable tightness and hence is not a $k$-space,  see \cite[Lemma~7.6.22]{ArT}. Below we show that $V(\w_1)\times V(\w_1)$ is not a $2$-Ascoli space, and hence the square of a Fr\'{e}chet--Urysohn space may even not be $2$-Ascoli.

\begin{proposition} \label{p:square-non-Ascoli}
The product $V(\w_1)\times V(\w_1)$ is not a $2$-Ascoli space.
\end{proposition}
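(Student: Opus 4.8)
The plan is to apply Proposition \ref{p:2-Ascoli-sufficient} directly, so the entire task reduces to exhibiting a countable family of clopen subsets of $V(\w_1)\times V(\w_1)$, together with a single point, that satisfy the two hypotheses (compact-finiteness and the cluster-point condition) of that proposition. Let me set up coordinates. Write the first factor as $V(\w_1)=\bigcup_{i\in\w_1}S_i$ with common limit $x_\infty$, and the second as $V(\w_1)=\bigcup_{j\in\w_1}T_j$ with common limit $x'_\infty$; enumerate the non-limit points of $S_i$ as $\{s_{i,n}:n\in\w\}$ with $s_{i,n}\to x_\infty$, and similarly $\{t_{j,n}:n\in\w\}$ with $t_{j,n}\to x'_\infty$. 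The distinguished point will be $z=(x_\infty,x'_\infty)$, whose uncountable tightness (as exploited by Gruenhage--Tanaka) is the source of the failure of the Ascoli property.

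The key step is to realize a countable clopen family whose union clusters at $z$ while remaining compact-finite. The standard device is a diagonal/pressing-down construction: I would choose, for each $n\in\w$, a set $U_n$ built so that its trace on the first factor lives on the $n$-th terms $s_{i,n}$ (ranging over $i$) and its trace on the second factor lives on a corresponding block of points $t_{j,n}$. Concretely, since all points $s_{i,n}$ and $t_{j,n}$ are isolated in their respective fans, any subset of $\big(V(\w_1)\setminus\{x_\infty\}\big)\times\big(V(\w_1)\setminus\{x'_\infty\}\big)$ consisting of isolated points is clopen; so I would define each $U_n$ as a suitable set of such product-isolated points. The goal is to arrange that (a) every basic neighborhood $O\times O'$ of $z$ meets $\bigcup_n U_n$ — giving condition (ii) — and (b) every compact $C\subseteq V(\w_1)\times V(\w_1)$ meets only finitely many $U_n$ — giving condition (i). For (b) I would exploit the remark in the excerpt that a compact subset of $V(\kappa)$ is disjoint from all but finitely many $S_i^-$, hence (projecting) a compact $C$ lives inside $\big(F\times V(\w_1)\big)\cup\big(V(\w_1)\times F'\big)$ for finite index sets plus neighborhoods of the limits, and the terms $s_{i,n}$ escape every neighborhood of $x_\infty$ as $n\to\infty$, forcing only finitely many $U_n$ to survive.

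The main obstacle is the tension between (i) and (ii): condition (ii) wants the $U_n$ to cluster at $z$, which pulls their points toward the two limit coordinates, whereas condition (i) wants the $U_n$ to stay away from compact sets, in particular away from the convergent sequences $S_i$ and $T_j$ that generate the compacta — and because $z$ has uncountable tightness, no single countable family of isolated points accumulates at $z$ along either fan coordinate alone. The resolution, which is exactly what uncountable tightness provides, is to spread the $U_n$ across \emph{uncountably many} of the indices $i$ and $j$ simultaneously, using the $n$-th level in each factor; then a fixed compact $C$, being confined to finitely many $S_i^-$ and finitely many $T_j^-$ (outside neighborhoods of the limits), can intersect $U_n$ only for the finitely many $n$ corresponding to terms not yet swallowed by the chosen neighborhoods. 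I would verify (ii) by checking that an arbitrary neighborhood $O\times O'$ of $z$, which must contain $s_{i,n}$ for all large $n$ (for each fixed $i$ in a cofinite subset of each $S_i$) and likewise $t_{j,n}$, necessarily captures at least one chosen point of some $U_n$.

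Once the family $\U=\{U_n:n\in\w\}$ and the point $z$ are in place and both conditions are confirmed, Proposition \ref{p:2-Ascoli-sufficient} immediately yields that $V(\w_1)\times V(\w_1)$ is not a $2$-Ascoli space, completing the proof; I expect the write-up to consist mainly of pinning down the precise indexing of the points in each $U_n$ so that the compact-finite verification is clean.
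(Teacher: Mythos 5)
Your plan --- apply Proposition \ref{p:2-Ascoli-sufficient} at the point $z=(x_\infty,x_\infty)$ to a family of clopen sets made of isolated product points --- is indeed the paper's route, but your proposal stops exactly where the actual proof begins, and the one verification you do sketch rests on a false statement about the fan. You justify compact-finiteness by claiming that ``the terms $s_{i,n}$ escape every neighborhood of $x_\infty$ as $n\to\infty$''; in fact $s_{i,n}\to x_\infty$, so every neighborhood of $x_\infty$ contains $s_{i,n}$ for all large $n$, and whole spokes are compact. Consequently $S_i\times T_j$ is a compact subset of the square containing the level-$n$ pair $(s_{i,n},t_{j,n})$ for \emph{every} $n$, so any family in which a fixed pair of spokes $(i,j)$ contributes points at infinitely many levels violates condition (i) of Proposition \ref{p:2-Ascoli-sufficient}. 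You are therefore forced to use each pair of spokes at only finitely many levels --- and once you do that, condition (ii) stops being the routine check you describe: a neighborhood of $z$ may delete a \emph{different} finite piece from each of the $\w_1$ spokes (it is governed by two arbitrary functions $f,g:\w_1\to\w$), and you must still produce a chosen point $(s_{i,n},t_{j,n})$ with $n\geq\max\big(f(i),g(j)\big)$. Nothing in your sketch addresses this tension; ``pinning down the precise indexing'' is the entire problem, not an afterthought.

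This is precisely the combinatorial heart that the paper imports from Gruenhage--Tanaka: two families $\{A_\alpha\}_{\alpha\in\w_1}$ and $\{B_\beta\}_{\beta\in\w_1}$ of infinite subsets of $\w$ such that (a) each $A_\alpha\cap B_\beta$ is finite --- this is what gives condition (i), since the spoke pair $(\alpha,\beta)$ is used only at the finitely many levels $n\in A_\alpha\cap B_\beta$, together with the fact that a compact set lives in finitely many spokes of each factor --- and (b) no $D\subseteq\w$ makes all $A_\alpha\SM D$ and all $B_\beta\cap D$ finite. Condition (ii), that $z$ clusters the set $\{(x_{n,\alpha},x_{n,\beta}):n\in A_\alpha\cap B_\beta\}$, is exactly what property (b) buys, and its verification is nontrivial (the paper cites Lemma~7.6.22 of \cite{ArT}); the existence of such Luzin-gap-type families is itself a theorem, not a ``standard device.'' Your appeal to uncountable tightness is moreover circular: the uncountable tightness of $z$ in the square is \emph{deduced} from these same families, and by itself it yields no compact-finite clopen family clustering at $z$. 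Finally, a caution: since you index by $I=\w$, Proposition \ref{p:2-Ascoli-sufficient} would deliver the stronger conclusion that $V(\w_1)\times V(\w_1)$ is not \emph{sequentially} $2$-Ascoli, hence (by Corollary \ref{l:seq-AscoliCat}) not sequentially Ascoli --- a statement the author explicitly records as unknown immediately after the proposition. That does not by itself refute your plan, but it means your unverified steps carry an even heavier burden than the paper's uncountably indexed family $U_{n,\alpha,\beta}=\{(x_{n,\alpha},x_{n,\beta})\}$, and as written the proposal contains no proof of either condition.
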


\begin{proof}
We shall use the idea of Gruenhage and Tanaka and consider two families $\{ A_\alpha: \alpha\in\w_1\}$ and $\{ B_\alpha: \alpha\in\w_1\}$ of infinite subsets of $\w$ satisfying the following two conditions:
\begin{enumerate}
\item[{\rm (a)}] $A_\alpha \cap B_\beta$ is finite for all $\alpha,\beta \in\w_1$,
\item[{\rm (b)}] for no $D\subseteq \w$, all sets $A_\alpha\SM D$ and $B_\alpha\cap D$, $\alpha\in\w_1$, are finite.
\end{enumerate}
Set $z:= (x_\infty,x_\infty)$. For all $\alpha,\beta \in\w_1$ and $n\in A_\alpha \cap B_\beta$, we put
$
U_{n,\alpha,\beta} :=\{ (x_{n,\alpha},x_{n,\beta})\}
$
(so $U_{n,\alpha,\beta}$ is a clopen subset of $V(\w_1)\times V(\w_1)$).
To apply Proposition \ref{p:2-Ascoli-sufficient} to the family $\U=\{U_{n,\alpha,\beta}\}$ and the point $z$ we have to check conditions (i)-(ii). The condition (ii) is proved in Lemma~7.6.22 of \cite{ArT}.

To check (i), let $C$ be a compact subset of $V(\w_1)\times V(\w_1)$. Then there is a finite subset $\Gamma$ of $\w_1$ such that $C$ is contained in the product
$
\Big( \bigcup_{\gamma\in\Gamma} S_\gamma\Big) \times \Big( \bigcup_{\gamma\in\Gamma} S_\gamma\Big).
$
Therefore, if $(x_{n,\alpha},x_{n,\beta})\in C$, then $\alpha,\beta\in \Gamma$ and, by construction, $n\in A_\alpha \cap B_\beta$. Now the finiteness of the sets $\Gamma$ and $A_\alpha \cap B_\beta$ (see (a)) imply that the set
\[
\{ (n,\alpha,\beta): C\cap U_{n,\alpha,\beta}\not=\emptyset\}= \{ (n,\alpha,\beta): (x_{n,\alpha},x_{n,\beta})\in C\}
\]
is finite, and hence (i) of Proposition \ref{p:2-Ascoli-sufficient} holds true. Thus $V(\w_1)\times V(\w_1)$ is not $2$-Ascoli.\qed
\end{proof}

We do not know whether  the product $V(\w_1)\times V(\w_1)$ is not sequentially Ascoli.

\begin{problem}
Is there a (sequentially) Ascoli group whose square is not (sequentially) Ascoli?
\end{problem}

The next proposition complements Proposition \ref{p:seq-AscoliCat}.

\begin{proposition} \label{p:open-image-Ascoli}
Let $Y$ be a Tychonoff space containing at least two points,  $X$ and $Z$ be $Y$-Tychonoff spaces, and let $p:X\to Z$ be an open surjective map. If $X$ is a (sequentially) $Y$-Ascoli space, then so is $Z$.
\end{proposition}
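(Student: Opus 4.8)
The plan is to prove both cases at once by transporting even continuity from $X$ to $Z$ along the composition operator induced by $p$, working directly from the definition rather than through the canonical map $\delta$ (although Theorem~\ref{t:seq-Ascoli} would give an alternative route). Let $\KK$ be a compact subset (respectively, a convergent sequence) of $\CC(Z,Y)$; I must show that $\KK$ is evenly continuous.

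First I would introduce the composition operator
$
p^\ast:\CC(Z,Y)\to\CC(X,Y),\quad p^\ast(g):=g\circ p,
$
and observe that it is continuous: for every compact $K\subseteq X$ and open $U\subseteq Y$ one has $(p^\ast)^{-1}\big([K;U]\big)=[p(K);U]$, and $p(K)$ is compact because $p$ is continuous. Hence $p^\ast(\KK)$ is again a compact subset (respectively, a convergent sequence, with $p^\ast$ carrying the limit of $\KK$ to the limit of $p^\ast(\KK)$) of $\CC(X,Y)$. Since $X$ is (sequentially) $Y$-Ascoli, the set $p^\ast(\KK)$ is therefore evenly continuous.

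It remains to pull even continuity back to $\KK$. Fix $g_0\in\KK$, a point $z_0\in Z$ and an open neighborhood $O$ of $g_0(z_0)$ in $Y$. Using surjectivity of $p$, pick $x_0\in X$ with $p(x_0)=z_0$, so that $(g_0\circ p)(x_0)=g_0(z_0)$. By even continuity of $p^\ast(\KK)$ at $(g_0\circ p,x_0)$ there are an open neighborhood $V'\subseteq p^\ast(\KK)$ of $g_0\circ p$ and an open neighborhood $O_{x_0}\subseteq X$ of $x_0$ such that $(g\circ p)(x)\in O$ for all $g\circ p\in V'$ and all $x\in O_{x_0}$. Now put $V:=(p^\ast{\restriction}_{\KK})^{-1}(V')$, which is an open neighborhood of $g_0$ in $\KK$ by continuity of $p^\ast$, and $O_{z_0}:=p(O_{x_0})$, which is an open neighborhood of $z_0$ in $Z$ because $p$ is open. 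For any $g\in V$ and any $w\in O_{z_0}$, write $w=p(x)$ with $x\in O_{x_0}$; then $g(w)=(g\circ p)(x)\in O$ since $g\circ p\in V'$. Thus $\KK$ is evenly continuous at $(g_0,z_0)$, and as $g_0$ and $z_0$ were arbitrary, $\KK$ is evenly continuous and $Z$ is (sequentially) $Y$-Ascoli.

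I do not expect a genuine obstacle; the content is a clean categorical transport, and the main thing to be careful about is that the three assumptions on $p$ are each used in a distinct essential role — continuity both to make $p^\ast$ continuous and to pull back the function-side neighborhood $V'$, surjectivity both to lift $z_0$ to $x_0$ and to realize every $w\in O_{z_0}$ as $p(x)$, and openness to ensure that $p(O_{x_0})$ is a neighborhood of $z_0$ — together with the verification that $p^\ast(\KK)$ is of the correct type (compact, respectively a convergent sequence) so that the hypothesis on $X$ applies.
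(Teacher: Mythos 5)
Your proof is correct and follows essentially the same route as the paper's: both introduce the adjoint map $p^\ast(g)=g\circ p$, transport $\KK$ to a compact set (resp.\ convergent sequence) $p^\ast(\KK)$ in $\CC(X,Y)$, invoke even continuity there at a lifted point $x_0$ with $p(x_0)=z_0$, and then use openness of $p$ to push the $X$-side neighborhood forward and continuity of $p^\ast$ to pull the function-side neighborhood back. The only difference is presentational: you verify the continuity of $p^\ast$ explicitly via $(p^\ast)^{-1}\big([K;U]\big)=[p(K);U]$, which the paper leaves implicit.
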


\begin{proof}
Denote by $p^\ast: \CC(Z,Y)\to \CC(X,Y)$, $p^\ast(f):=f\circ p$, the adjoint map of $p$. Let $K$ be a compact subset (or a convergent sequence) of $\CC(Z,Y)$. Fix $f\in K$, $s\in Z$ and a neighborhood $O_{f(s)}\subseteq Y$ of $f(s)$. Choose $t\in X$ such that $p(t)=s$. Since $p^\ast(K)$ is a compact subset (or a convergent sequence) of $\CC(X,Y)$, there are  open neighborhoods $U\subseteq X$ of $t$ and $U_{p^\ast(f)}$ of $p^\ast(f)$ in $\CC(X,Y)$ such that
\begin{equation} \label{equ:Ascoli-open}
g(x)\in O_{f(s)} \; \mbox{ for every } \; x\in U \; \mbox{ and }\; g\in U_{p^\ast(f)} \cap p^\ast(K).
\end{equation}
As $p$ is open, $p(U)$ is an open neighborhood of $s$. Set $U_f := (p^\ast)^{-1}(U_{p^\ast(f)})\cap K$, so $U_f$ is a neighborhood of $f$ in $K$. Now, for every $z=p(x)\in p(U)$ with $x\in U$ and each $F\in U_{f}$, (\ref{equ:Ascoli-open}) implies
\[
F(z)=F\circ p(x)=p^\ast(F)(x)\in O_{f(s)} .
\]
Thus $K$ is evenly continuous. \qed
\end{proof}

Since quotient maps of topological groups are open (see \cite[Theorem~5.26]{HR1}) Proposition \ref{p:open-image-Ascoli} implies
\begin{corollary} \label{c:open-image-Ascoli}
Let $Y$ be a Tychonoff space containing at least two points, and let $H$ be a normal closed subgroup of a $Y$-Tychonoff group $G$ such that the quotient group $G/H$ is $Y$-Tychonoff. If $G$ is (sequentially) $Y$-Ascoli, then so is $G/H$. In particular, a Hausdorff quotient group of a (sequentially) Ascoli group is a (sequentially) Ascoli group.
\end{corollary}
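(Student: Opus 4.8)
The plan is to derive the statement as a direct application of Proposition \ref{p:open-image-Ascoli}, taking as the open surjection the canonical quotient homomorphism. First I would consider the quotient map $q\colon G\to G/H$, $q(g):=gH$. This map is surjective by construction, and, since quotient homomorphisms of topological groups are open by \cite[Theorem~5.26]{HR1}, the map $q$ is open. By hypothesis both $G$ and $G/H$ are $Y$-Tychonoff spaces, and $G$ is assumed to be (sequentially) $Y$-Ascoli, so all the hypotheses of Proposition \ref{p:open-image-Ascoli} are met with $X:=G$, $Z:=G/H$ and $p:=q$. The proposition then yields at once that $G/H$ is (sequentially) $Y$-Ascoli.

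For the final ``in particular'' assertion I would specialize to $Y=\IR$. Recall that a topological space is Tychonoff if and only if it is $\IR$-Tychonoff, and that every topological group is assumed to be $T_0$ and hence Tychonoff; in particular $G$ is $\IR$-Tychonoff. A quotient group $G/H$ is Hausdorff precisely when $H$ is closed, in which case $G/H$ is a Hausdorff topological group and therefore Tychonoff, hence $\IR$-Tychonoff. Thus the first part of the corollary applies with $Y=\IR$, and since being Ascoli means being $\IR$-Ascoli, one concludes that the Hausdorff quotient $G/H$ of a (sequentially) Ascoli group $G$ is again (sequentially) Ascoli.

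Since the whole argument is a bookkeeping reduction to an already-established result, there is essentially no genuine obstacle to overcome; the only point requiring input beyond Proposition \ref{p:open-image-Ascoli} is the openness of the quotient homomorphism, which is the standard fact about topological groups cited from \cite{HR1}. Accordingly I expect the proof to be very short, amounting to verifying that the quotient map satisfies the three hypotheses (surjectivity, openness, and the $Y$-Tychonoff property of domain and range) of the preceding proposition.
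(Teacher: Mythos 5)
Your proposal is correct and matches the paper's own proof, which likewise derives the corollary in one line from Proposition \ref{p:open-image-Ascoli} together with the openness of quotient homomorphisms of topological groups cited from \cite[Theorem~5.26]{HR1}. Your added verification for the ``in particular'' clause (taking $Y=\IR$ and using that a space is Tychonoff iff it is $\IR$-Tychonoff, with Hausdorff quotient groups being Tychonoff) just makes explicit what the paper leaves implicit.
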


Let us recall that a topological space $X$ is called a {\em $P$-space} if each $G_\delta$-set in $X$ is open.
In Proposition 2.9 of \cite{Gabr:weak-bar-L(X)} we proved that any non-discrete Tychonoff $P$-space is  sequentially Ascoli but not Ascoli. Below we extend this result.
\begin{proposition} \label{p:P-space-2-Ascoli}
Let $X$ be a non-discrete Tychonoff $P$-space. Then:
\begin{enumerate}
\item[{\rm (i)}] $X$ is a hereditary sequentially Ascoli space;
\item[{\rm (ii)}] $X$ is not $2$-Ascoli;
\item[{\rm (iii)}] if additionally $X$ is a one-point Lindel\"{o}fication of an uncountable discrete set, then $X$ is a hereditary  Baire space.
\end{enumerate}
\end{proposition}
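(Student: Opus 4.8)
The plan rests on two structural facts about a Tychonoff (hence $T_1$) $P$-space $X$, both of which I would record first. First, every countable subset of $X$ is closed and discrete, since its complement, and indeed the complement of any of its subsets, is a $G_\delta$ and hence open; consequently every compact subset of $X$ is finite, so $\CC(X)=C_p(X)$ and a family of subsets of $X$ is compact-finite if and only if it is point-finite. Second, $X$ is zero-dimensional: given $x\in U$ with $U$ open, a Urysohn function $f$ with $f(x)=0$ and $f(X\SM U)=\{1\}$ yields the clopen set $f^{-1}(0)=\bigcap_{n\in\w}f^{-1}\big([0,1/n)\big)$, which is a $G_\delta$ (so open) and closed, with $x\in f^{-1}(0)\subseteq U$. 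I shall also use that every subspace of a $P$-space is again a $P$-space.

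\textbf{(i).} Since all subspaces of $X$ are Tychonoff $P$-spaces, it suffices to prove that every Tychonoff $P$-space is sequentially Ascoli, which I would do by verifying clause (vi) of Theorem \ref{t:seq-R-Ascoli}. Let $\{A_n:n\in\w\}$ be a strongly functionally compact-finite family of functionally closed sets, witnessed by functionally open $U_n\supseteq A_n$ with $\{U_n\}$ compact-finite, i.e.\ point-finite; then $\{A_n\}$ is point-finite, so for each $x\in X$ the set $J=\{n\in\w:x\in A_n\}$ is finite. The key move is that $O:=\bigcap_{n\notin J}(X\SM A_n)$ is a countable intersection of open sets, hence open in the $P$-space; it contains $x$ and meets no $A_n$ with $n\notin J$, so $\{A_n\}$ is locally finite and $X$ is sequentially Ascoli.

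\textbf{(ii).} As $X$ is non-discrete, I would fix a non-isolated point $z$ and, by Zorn's lemma, a maximal pairwise disjoint family $\U=\{U_i:i\in I\}$ of nonempty clopen sets with $z\notin U_i$ for all $i$, aiming to apply Proposition \ref{p:2-Ascoli-sufficient}. Since the $U_i$ are pairwise disjoint and every compact set is finite, $\U$ is automatically compact-finite, giving condition (i) there. The only substantive point, and the main obstacle, is condition (ii): that $z$ is a cluster point of $\bigcup_i U_i$. I would verify it by contradiction: if some clopen neighborhood $W$ of $z$ missed $\bigcup_i U_i$, then picking $y\in W\SM\{z\}$ and, by zero-dimensionality, a clopen $U$ with $y\in U\subseteq W$ and $z\notin U$, the set $U$ would be a nonempty clopen set disjoint from every $U_i$ and omitting $z$, contradicting maximality of $\U$. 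Proposition \ref{p:2-Ascoli-sufficient} then shows $X$ is not $2$-Ascoli; note that $I$ must be uncountable, consistent with part (i), since $X$ is sequentially $2$-Ascoli by Corollary \ref{l:seq-AscoliCat}.

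\textbf{(iii).} Writing $X=D\cup\{\infty\}$, where $D$ is the uncountable discrete set of isolated points and the neighborhoods of $\infty$ are the co-countable sets containing it, I would prove that every subspace $E\subseteq X$ has a dense set of isolated points; since a space whose isolated points are dense is Baire (every dense open set contains all isolated points, so any countable intersection of dense open sets contains the dense set of isolated points), this yields that every subspace of $X$ is Baire, i.e.\ $X$ is hereditarily Baire. The points of $E\cap D$ are isolated in $E$; if $\infty\notin E$, or if $\infty$ is isolated in $E$ (equivalently $E\cap D$ is countable), then $E$ is discrete, while otherwise every neighborhood of $\infty$ in $E$ meets the uncountable set $E\cap D$, so $\infty\in\overline{E\cap D}$ and the isolated points of $E$ are dense. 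The only care needed is the clean case split between $E\cap D$ countable and uncountable.
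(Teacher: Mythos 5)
Your proof is correct, and it takes a genuinely different route: it is self-contained where the paper simply cites external results. For (i) the paper observes that every subspace of a Tychonoff $P$-space is again a Tychonoff $P$-space and then invokes Proposition 2.9 of \cite{Gabr:weak-bar-L(X)}; you instead verify clause (vi) of Theorem \ref{t:seq-R-Ascoli} directly, and your two $P$-space observations (compact sets are finite, so compact-finite equals point-finite; countable intersections of open sets are open, so any point-finite sequence of closed sets is automatically locally finite) make this immediate. For (ii) the paper cites Lemma 2.8 of \cite{Gabr:weak-bar-L(X)} and Proposition 5.12 of \cite{BG}; your argument --- a maximal pairwise disjoint family of nonempty clopen sets avoiding a non-isolated point $z$, with maximality forcing $z$ to be a cluster point of the union, fed into Proposition \ref{p:2-Ascoli-sufficient} --- in effect reconstructs the content of that cited proposition, and it is the same Zorn's-lemma device the paper itself deploys in the proof of Theorem \ref{t:her-Ascoli-k-space}; all the ingredients (zero-dimensionality of Tychonoff $P$-spaces, compact-finiteness of a disjoint clopen family when compact sets are finite, $z\notin U_i$ guaranteeing non-equicontinuity) check out, and your remark that the index set must be uncountable is a correct consistency check against (i). For (iii) the paper splits subspaces into three cases (no $x_\infty$, countable, or again a one-point Lindel\"{o}fication) and then proves Baireness only of $X$ itself via the observation that every dense open set contains $X\SM\{x_\infty\}$; your criterion that a space with a dense set of isolated points is Baire handles all subspaces uniformly and is, if anything, cleaner, with the only delicate point --- the equivalence of ``$\infty$ is isolated in $E$'' with countability of $E\cap D$ --- correctly justified. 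In short, the paper's proof is shorter given its references, while yours buys independence from \cite{Gabr:weak-bar-L(X)} and \cite{BG} at the cost of redoing their (short) arguments inside the present framework.
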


\begin{proof}
(i) Let $Z$ be a subspace of $X$. Then $Z$ is also a Tychonoff $P$-space. Hence, by Proposition 2.9 of \cite{Gabr:weak-bar-L(X)}, $Z$ is a sequentially Ascoli space. Thus $X$ is a hereditary sequentially Ascoli space.

(ii) Every Tychonoff $P$-space is zero-dimensional and every compact subset of $X$ is finite (see Lemma 2.8 in \cite{Gabr:weak-bar-L(X)}). Thus $X$ is not $2$-Ascoli by Proposition 5.12 of \cite{BG}.

(iii)  Let $Z$ be a subspace of $X$.  If $Z$ does not contain the unique non-isolated point $x_\infty$ of $X$ or is countable, then $Z$ with the induced topology is discrete and hence Baire. If $Z$ is uncountable and contains $x_\infty$, then $Z$ is topologically isomorphic to a one-point Lindel\"{o}fication of the uncountable discrete space $Z\SM\{x_\infty\}$, and hence $Z$ is homeomorphic to a one point Lindel\"{o}fication of some uncountable discrete set. So it is sufficient to show that $X$ is Baire. It is easy to see that any open dense subset of $X$ is either $X$ or $X\SM \{x_\infty\}$. Therefore any intersection of dense open subsets of $X$ contains the dense subset $X\SM \{x_\infty\}$. Thus $X$ is Baire. \qed
\end{proof}

Proposition \ref{p:P-space-2-Ascoli} motivates the following problem.
\begin{problem} \label{prob:2-Ascoli-seq-Ascoli}
Is every $2$-Ascoli zero-dimensional space $X$  sequentially Ascoli?
\end{problem}

If $X$ is $2$-normal, Problem \ref{prob:2-Ascoli-seq-Ascoli} has a positive answer even in a stronger form.
\begin{proposition} \label{p:2-Ascoli-seq-Ascoli}
Let $X$ be a zero-dimensional space. If $X$ is $2$-normal, then $X$ is a sequentially $2$-Ascoli space if and only if it is sequentially Ascoli.
\end{proposition}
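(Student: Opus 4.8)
The plan is to prove the two implications separately; the forward one is immediate, and the reverse one is where both hypotheses are used.

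For \emph{sequentially Ascoli} $\Rightarrow$ \emph{sequentially $2$-Ascoli} I would simply invoke Corollary \ref{l:seq-AscoliCat}(i): a sequentially Ascoli space is sequentially $Y$-Ascoli for every Tychonoff space $Y$, and the two-point discrete space $\mathbf 2=\{0,1\}$ is Tychonoff. This direction uses neither zero-dimensionality nor $2$-normality. Note that one cannot run the converse through Corollary \ref{l:seq-AscoliCat}(ii), since $\IR$ is not zero-dimensional; this is exactly why the extra separation hypothesis is needed and why the unrestricted Problem \ref{prob:2-Ascoli-seq-Ascoli} is not settled this way.

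For the converse, the key observation I would exploit is that the combinatorial condition appearing as clause (ii) of Proposition \ref{p:seq-Ascoli-1} --- every strongly functionally compact-finite sequence of functionally closed subsets of $X$ is locally finite --- is \emph{verbatim} clause (vi) of Theorem \ref{t:seq-R-Ascoli}: it refers only to the topology of $X$ and is independent of the target. This shared condition is the bridge between the $\mathbf 2$-valued and the $\IR$-valued settings. Before using it I would record that a zero-dimensional (Tychonoff) space is automatically $\mathbf 2$-Tychonoff, viewing $\mathbf 2$ as the discrete two-element group: given a closed set $A$, a value $y_0\in\mathbf 2$, and a finite $F\subseteq X\setminus A$ carrying prescribed values $f$, the set $A\cup\{x\in F: f(x)=y_0\}$ is closed and disjoint from $G:=\{x\in F: f(x)\neq y_0\}$, so by the clopen base of $X$ I can enclose $G$ in a clopen set $V$ disjoint from it, and the map $\bar f$ equal to $1-y_0$ on $V$ and to $y_0$ on $X\setminus V$ is the required continuous extension. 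This makes Proposition \ref{p:seq-Ascoli-1} available with the abelian group $Y=\mathbf 2$.

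Now, since $X$ is $2$-normal, the implication (iii)$\Rightarrow$(i) of Proposition \ref{p:seq-Ascoli-1} applies: from the hypothesis that $X$ is sequentially $\mathbf 2$-Ascoli I obtain clause (i) there, and then (i)$\Rightarrow$(ii) yields the functionally compact-finite condition. Being clause (vi) of Theorem \ref{t:seq-R-Ascoli}, this condition is equivalent to clause (v), so $X$ is sequentially Ascoli, which closes the equivalence. I do not expect a genuine obstacle here; the only real content is recognising that the functionally compact-finite condition is target-independent, after which the proof is an assembly of cited results. The two hypotheses enter transparently: zero-dimensionality is what makes $X$ a $\mathbf 2$-Tychonoff space (so that Proposition \ref{p:seq-Ascoli-1} can be applied with $Y=\mathbf 2$), while $2$-normality is precisely what powers its non-trivial implication (iii)$\Rightarrow$(i). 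The mild care required is only in the $\mathbf 2$-Tychonoff verification and in matching the clause numbers across the two cited results.
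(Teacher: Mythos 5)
Your proof is correct, but it reaches the nontrivial implication by a genuinely different route than the paper. The paper proves sequentially $2$-Ascoli $\Rightarrow$ sequentially Ascoli by a direct construction: from a null-sequence $\{f_n\}\subseteq \CC(X)$ failing equicontinuity at some $x_0$ (normalized so that $f_n(x_0)=0$), it forms the disjoint closed sets $A_n=\{x: |f_n(x)|\geq 2\delta\}$ and $B_n=\{x: |f_n(x)|\leq\delta\}$, uses $2$-normality to produce $g_n\in C(X,\mathbf{2})$ with $g_n(A_n)\subseteq\{1\}$ and $g_n(B_n)=\{0\}$, checks that $g_n\to\mathbf{0}$ in $\CC(X,\mathbf{2})$ (for each compact $K$ one has $K\subseteq B_n$ eventually), and then contradicts sequential $2$-Ascoliness via Theorem \ref{t:seq-Ascoli}(v) with $Y=\mathbf{2}$ --- which is exactly where zero-dimensionality enters for the paper as well, since that theorem's blanket hypothesis requires $X$ to be $\mathbf{2}$-Tychonoff. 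You instead factor the argument through the target-independent clause: Proposition \ref{p:seq-Ascoli-1} (iii)$\Rightarrow$(i)$\Rightarrow$(ii) with $Y=\mathbf{2}$ viewed as the discrete two-element group (this is where $2$-normality is consumed), followed by (vi)$\Rightarrow$(v) of Theorem \ref{t:seq-R-Ascoli}; all hypotheses are matched correctly, and your hands-on verification that a zero-dimensional space (with the $T_1$ axiom built into the definition of $2$-normality, which makes the finite set closed) is $\mathbf{2}$-Tychonoff is sound --- it is also available from Example \ref{exa:zero-dim-Yp-approx} via Proposition 2.10 of \cite{GO}. What your route buys: it is pure assembly of cited results, it makes the role of each hypothesis transparent, and it in fact establishes the more general statement that for any abelian topological group $Y$ with at least two points, a $Y$-Tychonoff, $Y$-normal, sequentially $Y$-Ascoli space is sequentially Ascoli, since nothing in your chain is specific to $\mathbf{2}$ beyond the $\mathbf{2}$-Tychonoff check. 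What the paper's direct proof buys: it is self-contained at that point of the text and exhibits an explicit $\mathbf{2}$-valued null-sequence witnessing the failure of equicontinuity at the very point $x_0$, rather than passing through the compact-finite-families characterization --- whose proofs are, in effect, the same construction inlined.
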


\begin{proof}
Every sequentially Ascoli space is sequentially $2$-Ascoli. Conversely, assume that $X$ is a sequentially $2$-Ascoli space. To prove that $X$ is a sequentially Ascoli space, let $\{ f_n\}_{n\in\w}$ be a null-sequence in $\CC(X)$ and suppose for a contradiction that it is not equicontinuous at some point $x_0\in X$. Without loss of generality we can assume that $f_n(x_0)=0$ for all $n\in\w$ replacing $f_n$ by $f_n-f_n(x_0)$ if needed. Then there is $\delta>0$ such that the  sets
$A_n:= \{ x\in X: |f_n(x)|\geq 2\delta\}$  $(n\in\w)$ satisfy the following condition
\begin{equation} \label{equ:2-seq-Ascoli-1}
x_0 \in \overline{\bigcup_{n\in\w}A_n }.
\end{equation}
For every $n\in\w$, define $B_n:= \{ x\in X:  |f_n(x)|\leq \delta\}$. Then $A_n$ and $B_n$ are closed and disjoint. Since $X$ is $2$-normal, there is a function $g_n\in C(X,\mathbf{2})$ such that
\begin{equation} \label{equ:2-seq-Ascoli-2}
g_n(B_n)=\{0\} \;\; \mbox{ and }\;\; g_n(A_n)\subseteq\{1\} \quad (n\in\w).
\end{equation}
Now (\ref{equ:2-seq-Ascoli-1}) and (\ref{equ:2-seq-Ascoli-2}) imply that the sequence $\{ g_n\}_{n\in\w}$ is not equicontinuous at $x_0$. Therefore, by Theorem \ref{t:seq-Ascoli}(v), to get a desired contradiction it suffices to show that $g_n\to \mathbf{0}$ in $\CC(X,\mathbf{2})$. To this end, fix an arbitrary compact subset $K$ of $X$. Since $f_n\to \mathbf{0}$ in $\CC(X)$, choose an $m\in\w$ such that
\[
f_n\in W[\mathbf{0}; K, (-\delta,\delta)] \;\; \mbox{ for every }\; n\geq m.
\]
Then, for every $n\geq m$, we have $K\subseteq \{ x\in X:  |f_n(x)|<\delta\} \subseteq B_n$ and hence $g_n(K)=\{0\}$. Thus $g_n\to \mathbf{0}$ in $\CC(X,\mathbf{2})$.\qed
\end{proof}

Arhangel'skii proved (see \cite[3.12.15]{Eng}) that if $X$ is a hereditary $k$-space (i.e., {\em every} subspace of $X$ is a $k$-space), then $X$ is Fr\'{e}chet--Urysohn. Below we essentially strengthen this remarkable result by proving the following theorem.

\begin{theorem} \label{t:her-Ascoli-k-space}
A Tychonoff space is hereditary $2$-Ascoli if and only if it is Fr\'{e}chet--Urysohn. Consequently, a  Tychonoff space is hereditary Ascoli if and only if it is Fr\'{e}chet--Urysohn.
\end{theorem}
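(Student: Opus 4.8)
The plan is to prove the nontrivial implication---hereditary $\mathbf{2}$-Ascoli $\Rightarrow$ Fr\'echet--Urysohn---by contraposition, and to read off the Ascoli statement as a corollary. The two easy directions are immediate from the diagram in the introduction: since the Fr\'echet--Urysohn property is hereditary and yields $\kappa$-Fr\'echet--Urysohn, hence Ascoli, hence $\mathbf{2}$-Ascoli (a compact subset of $\CC(X,\mathbf{2})$ is compact in $\CC(X)$, and even continuity as $\IR$-valued maps restricts to even continuity as $\mathbf{2}$-valued maps), every subspace of a Fr\'echet--Urysohn space is both $\mathbf{2}$-Ascoli and Ascoli. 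For the converse it suffices to show: if $X$ is \emph{not} Fr\'echet--Urysohn, then some subspace of $X$ fails to be $\mathbf{2}$-Ascoli.

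So I would fix $A\subseteq X$ and $x_0\in\overline A\setminus A$ such that no sequence in $A$ converges to $x_0$. The core of the argument is the following clean case: suppose one can find a \emph{discrete} subspace $D\subseteq A$ with $x_0\in\overline D$. Put $Z:=D\cup\{x_0\}$, so that $x_0$ is the unique non-isolated point of $Z$ and each singleton $\{d\}$ $(d\in D)$ is clopen in $Z$. I would then apply Proposition \ref{p:2-Ascoli-sufficient} to the family $\{\{d\}:d\in D\}$ and the point $z=x_0$: condition (ii) is exactly $x_0\in\overline D$, while condition (i)---that every compact $C\subseteq Z$ meets $D$ in a finite set---follows from the no-convergent-sequence hypothesis. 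Indeed, if $C\cap D$ were infinite, pick a countably infinite $\{c_n\}\subseteq C\cap D$; its closure in the compact Hausdorff space $C$ is $\{c_n\}\cup\{x_0\}$, a countable compact Hausdorff space, hence first countable at $x_0$, so some subsequence of $(c_n)\subseteq A$ would converge to $x_0$, a contradiction. Thus $Z$ is not $\mathbf{2}$-Ascoli and $X$ is not hereditary $\mathbf{2}$-Ascoli.

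The hard part will be the extraction of a discrete $D\subseteq A$ with $x_0\in\overline D$. My approach is a Zorn's lemma argument on pairs $(D,\{U_d\}_{d\in D})$, where $D\subseteq A$ is discrete and each $U_d$ is an open neighbourhood of $d$ with $U_d\cap D=\{d\}$, the order being coordinatewise extension that forces each new point to lie outside the earlier neighbourhoods; the neighbourhoods are precisely what keeps the union along a chain discrete, so a maximal pair exists. Maximality gives $A\subseteq\overline D\cup\bigcup_{d\in D}U_d$, whence $x_0\in\overline A\subseteq\overline D\cup\overline{\bigcup_{d\in D}U_d}$. If $x_0\in\overline D$ we are in the clean case above. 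The genuine obstacle is the residual case $x_0\in\overline{\bigcup_{d\in D}U_d}\setminus\overline D$, in which the isolating neighbourhoods themselves accumulate at $x_0$.

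To dispose of the residual case I would split on the tightness of $X$. If $t(X)>\w$, then $X$ carries a free sequence of length $\w_1$; a free sequence is a discrete subspace whose countable subsets are bounded, so it clusters at some point $z$ with no convergent subsequence, and the ``singletons are clopen'' argument of the second paragraph applies verbatim to produce a non-$\mathbf{2}$-Ascoli subspace. If $t(X)=\w$, then I may take $A$ countable, and I would iterate the maximal-discrete construction inside the shrinking traces $A\cap V$ of neighbourhoods $V$ of $x_0$: each pass either lands in $\overline D$ (and we are done) or refines into a strictly smaller neighbourhood, and a diagonal choice of one point per refinement---together with the fact that no sequence converges to $x_0$---yields the required discrete set clustering at $x_0$. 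Once the discrete set is in hand the core argument finishes the proof, and the ``consequently'' clause is formal, since hereditary Ascoli $\Rightarrow$ hereditary $\mathbf{2}$-Ascoli $\Rightarrow$ Fr\'echet--Urysohn $\Rightarrow$ hereditary Ascoli.
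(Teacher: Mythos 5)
Your ``clean case'' is correct and is genuinely close to the paper's key construction (with singletons in place of a disjoint open family), but the reduction to it has a real gap: a discrete $D\subseteq A$ with $x_0\in\overline{D}$ simply need not exist, and your two-branch repair does not close this. Concretely, take a countable regular crowded maximal space $X$ (such spaces exist in ZFC, by van Douwen). Maximal spaces are nodec, and a discrete subset of a crowded $T_1$-space is nowhere dense, so in $X$ \emph{every discrete subspace is closed}; hence for any $A$ and $x_0\in\overline{A}\setminus A$ no discrete $D\subseteq A$ can cluster at $x_0$. Such an $X$ is countable, hence of countable tightness, so it lands squarely in your $t(X)=\w$ branch --- meaning the ``diagonal choice'' there is not merely a sketch that needs filling in, but a step that cannot succeed in the stated form: any argument that only ever builds witness subspaces of the shape $D\cup\{x_0\}$ with $D\subseteq A$ discrete is doomed (note $X$ is not Fr\'{e}chet--Urysohn, since a nontrivial convergent sequence minus its limit would be a discrete non-closed set, so by Theorem \ref{t:her-Ascoli-k-space} it \emph{does} contain a non-$\mathbf{2}$-Ascoli subspace, just not one of your form). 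The $t(X)>\w$ branch is also unsupported: the implication ``uncountable tightness yields a free sequence of length $\w_1$'' is Arhangel'skii's theorem for \emph{compact} spaces and is not available for arbitrary Tychonoff spaces; moreover a free sequence need not have any cluster point outside compact-type settings, and even when it clusters at some $z$, nothing prevents a subsequence from converging to $z$, which destroys exactly the compact-finiteness your clean case needs.

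The paper avoids all of this by not arguing directly from the Fr\'{e}chet--Urysohn failure (which carries only sequential information) but from a $k$-space failure, which controls \emph{all} compact sets: it shows that a hereditary $\mathbf{2}$-Ascoli space is a $k$-space, using a set $A$ with $A\cap K$ closed in $K$ for every compact $K$ and a point $z\in\overline{A}\setminus A$; this hypothesis yields that every compact subset of $Y=A\cup\{z\}$ either misses $z$ or has $z$ isolated. Then, instead of extracting discrete points, it applies Zorn's lemma to families of pairwise disjoint open subsets of $Y$ whose closures miss $z$, obtains a maximal such family $\{V_i\}_{i\in I}$ with $z\in\overline{\bigcup_{i\in I}V_i}$, and shows that in $Z:=\{z\}\cup\bigcup_{i\in I}V_i$ the set $\{\mathbf{0}\}\cup\{\mathbf{1}_{V_i}\}_{i\in I}$ is a compact, non-equicontinuous subset of $\CC(Z,\mathbf{2})$ --- the cellular family of open sets does the job your discrete set cannot, precisely because it survives in nodec-type spaces. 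Since the hereditary $\mathbf{2}$-Ascoli property passes to subspaces, every subspace is then a $k$-space, and Arhangel'skii's theorem \cite[3.12.15]{Eng} (hereditary $k$-space $\Rightarrow$ Fr\'{e}chet--Urysohn) finishes; this last reduction is also the natural repair for your argument: derive a non-$k$ subspace from the Fr\'{e}chet--Urysohn failure and run the cellular-family construction inside it, rather than trying to extract a discrete set clustering at $x_0$.
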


\begin{proof}
If we shall show that every hereditary $2$-Ascoli space is a $k$-space, we shall prove that it is a hereditary $k$-space, and applying
Arhangel'skii's theorem \cite[3.12.15]{Eng} 
we obtain that it is Fr\'{e}chet--Urysohn.
So, let $X$ be a hereditary $2$-Ascoli space and suppose for a contradiction that $X$ is not a $k$-space. Then there exists a non-closed subspace $A$ of $X$ such that $A\cap K$ is closed in $K$ for every compact subspace $K$ of $X$. Choose an arbitrary point $z\in \overline{A}\SM A$ and consider the subspace $Y:= A\cup\{ z\}$ of $X$.

We claim that if $K$ is a compact subspace of $Y$ then either $z\not\in K$ or $z$ is an isolated point of $K$. Indeed, if $z$ is a non-isolated point of $K$, then $z \in \overline{A\cap K}$. Since $z\not\in A$ we obtain that $A\cap K$ is not closed in $K$ that contradicts the choice of $A$.

Consider the family $\U$ of all collections $\{ U_i\}_{i\in I}$ of pairwise disjoint open sets in $Y$ such that $z\not\in \bigcup_{i\in I} \overline{U_i}$. By Zorn's lemma, the family $\U$ has a maximal (under inclusion) collection $\{ V_i\}_{i\in I}$. The maximality of $\{ V_i\}_{i\in I}$ and the fact that $z$ is not isolated in $Y$ imply that $z\in \overline{\bigcup_{i\in I} V_i}$.

Consider the subspace $Z:= \{ z\} \cup \bigcup_{i\in I} V_i$ of $Y$ and hence of $X$. Since $X$ is hereditary $2$-Ascoli, the space $Z$ is a $2$-Ascoli space. Observe that $z$ is a non-isolated point of $Z$, and the fact $z\not\in \bigcup_{i\in I} \overline{V_i}$ implies that all subspaces $V_i$ are clopen in $Z$. Therefore, for every $i\in I$, the function $\mathbf{1}_{V_i}$ is continuous on $Z$. Set $\KK:= \{\mathbf{0}\}\cup \{ \mathbf{1}_{V_i}\}_{i\in I}$. Let us show that $\KK$ is a compact subset of $\CC(Z,\mathbf{2})$. Indeed, let $[K;0]$ be a standard open neighborhood of the zero function $\mathbf{0}\in \CC(Z,\mathbf{2})$, where $K$ is a compact subset of $Z$. By the claim we can assume that $K= \{z\} \cup K_0$, where $K_0$ is a compact subset of  $\bigcup_{i\in I} V_i$ and $z\not\in K_0$. Then the set $J:=\{i\in I: V_i\cap K_0 \not=\emptyset\}$ is finite. Clearly, if $i\not\in J$, then $\mathbf{1}_{V_i}\in [K;0]$. Thus $\KK$ is compact.
Since $z\in \overline{\bigcup_{i\in I} V_i}$, it follows that the compact set $\KK$ is not equicontinuous at the point $z$. Therefore the space $Z$ is not $2$-Ascoli, a contradiction. Thus the space $X$ must be a $k$-space. \qed
\end{proof}

We end this section with the question to find natural classes of sequentially Ascoli spaces $X$ which are Ascoli.

\begin{problem} \label{prob:seq-Ascoli-1}
Let $X$ be a Tychonoff space such that every compact subset of $\CC(X)$ is metrizable (for example, $X$ is a cosmic space or more generally $X$ contains a dense $\sigma$-compact subspace). Is it true that $X$ is sequentially Ascoli if and only if it is an Ascoli space?
\end{problem}

The condition for the compact subsets of $\CC(X)$ of being metrizable  in Problem \ref{prob:seq-Ascoli-1} is essential (also one cannot replace the condition to have a dense $\sigma$-compact subspace by Lindel\"{o}fness of $X$  as Proposition \ref{p:P-space-2-Ascoli} shows).
\begin{example} \label{exa:Ascoli-seq-Ascoli}
There is a sequentially Ascoli non-Ascoli space $X$ such that every compact subset of $\CC(X)$ is  Fr\'{e}chet--Urysohn.
\end{example}

\begin{proof}
Let $X$ be a non-discrete Lindel\"{o}f $P$-space. Then, by Proposition \ref{p:P-space-2-Ascoli}, $X$ is a sequentially Ascoli non-Ascoli space. On the other hand, since every compact subset of a $P$-space is finite we have $\CC(X)=C_p(X)$. Therefore, by Theorem II.7.16 of \cite{Arhangel}, the space $\CC(X)$ and hence all its compact subspaces are Fr\'{e}chet--Urysohn. \qed
\end{proof}



\section{Sequentially Ascoli spaces in some compact-type classes of Tychonoff spaces} \label{sec:compact-Ascoli}



Let $G$ be a locally compact abelian (lca for short) group. The group $G$ endowed with the Borh topology $\tau_b$ induced from the Bohr compactification $bG$ of $G$ is denoted by $G^+$. Numerous topological and algebraic-topological properties of the precompact group $G^+$ are well-studied, we refer the reader to Chapter 9 of \cite{ArT} and references therein. It is an easy consequence of the Glicksberg theorem (which states that  the groups $G$ and $G^+$ have the same compact sets) that the group $G^+$ is a $k$-space if and only if $G$ is compact.  Below we essentially extend this result.

\begin{theorem} \label{t:seq-Ascoli-G+}
If $G$ is an lca group, then $G^+$ is sequentially Ascoli if and only if $G$ is compact.
\end{theorem}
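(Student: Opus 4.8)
The plan is to prove both implications, with all the content in the reverse direction, where I would produce a single explicit convergent sequence of characters that fails to be equicontinuous. The forward direction is immediate: if $G$ is compact then its Bohr compactification is $G$ itself, so $\tau_b$ coincides with the original topology, $G^+=G$ is compact, hence a $k$-space, hence Ascoli (by the Ascoli theorem quoted in the Introduction) and therefore sequentially Ascoli. For the converse I would argue contrapositively, assuming $G$ non-compact and exhibiting a convergent sequence in $\CC(G^+,\TT)$ that is not equicontinuous; since $G^+$ is $\TT$-Tychonoff (it is Tychonoff, so by the remark after Definition \ref{def:relatively-Y-Tych} it is even $\TT_k$-Tychonoff) and $\TT$ is an abelian group, Theorem \ref{t:seq-Ascoli}(v) then shows $G^+$ is not sequentially $\TT$-Ascoli, whence by Corollary \ref{l:seq-AscoliCat}(i) it is not sequentially Ascoli. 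The sequence will consist of characters: every $\chi\in\widehat{G}$ is $\tau_b$-continuous, the topology of $\widehat{G}$ is uniform convergence on compact subsets of $G$, and by Glicksberg's theorem the compact subsets of $G^+$ are exactly those of $G$; hence any sequence $\chi_n\to\mathbf{1}$ in $\widehat{G}$ is automatically a convergent sequence in $\CC(G^+,\TT)$ with limit the constant $\mathbf{1}$.

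The heart of the argument is the non-equicontinuity, and here I would exploit precompactness of $G^+$. Suppose, for contradiction, that a sequence of \emph{nontrivial} characters $\chi_n\to\mathbf{1}$ in $\widehat{G}$ were equicontinuous on $G^+$. Since $G^+$ is a precompact (totally bounded) group, an equicontinuous family converging pointwise converges uniformly on the whole group, so $\sup_{x\in G}|\chi_n(x)-1|\to 0$. But for a nontrivial character the closure of $\chi_n(G)$ is a nontrivial closed subgroup of $\TT$, which always contains a point at distance at least $\sqrt{3}$ from $1$; thus $\sup_{x\in G}|\chi_n(x)-1|\ge\sqrt{3}$ for every $n$, a contradiction. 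Consequently, \emph{any} sequence of nontrivial characters tending to $\mathbf{1}$ in $\widehat{G}$ is the desired non-equicontinuous convergent sequence, and this part of the proof is uniform in $G$, requiring no delicate compact-finite constructions.

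It remains to produce nontrivial characters $\chi_n\to\mathbf{1}$, that is, a nontrivial convergent sequence in $\widehat{G}$, and this is the step I expect to be the main obstacle. Since $G$ is non-compact, $\widehat{G}$ is non-discrete; writing $\widehat{G}\cong\IR^a\times B$ with $B$ containing a compact open subgroup $K$, the case $a\ge 1$ is settled by a convergent sequence in the $\IR$-factor, while for $a=0$ non-discreteness forces $K$ to be an infinite compact abelian group. I would then invoke the fact that every infinite compact abelian group contains a nontrivial convergent sequence. The cleanest route is by duality: $\widehat{K}$ is infinite discrete, and every infinite abelian group admits a countably infinite quotient $\widehat{K}\twoheadrightarrow Q$ (treating separately the case of an element of infinite order, where a subgroup maximal for being disjoint from it yields a quotient embedding into $\mathbb{Q}$, and the torsion case via the primary decomposition); dualizing, $\widehat{Q}$ is an infinite compact \emph{metrizable} subgroup of $K\le\widehat{G}$, and any non-discrete metrizable group clearly has nontrivial convergent sequences. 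Pulling such a sequence back along the inclusion $K\le\widehat{G}$, discarding any terms equal to $\mathbf{1}$, supplies the required $\chi_n$ and completes the proof.
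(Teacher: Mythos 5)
Your proof is correct, but it takes a genuinely different route from the paper's. The paper reduces $G$ itself via quotients: using the structure theorem $G\cong\IR^n\times G_0$ and Corollary \ref{c:open-image-Ascoli}, it splits into two cases, showing first that $\IR^+$ is not sequentially Ascoli by an explicit sequence of real-valued bump functions $f_n$ (with $f_n=0$ on $[-n,n]$ and $f_n=2$ on $[-3n,-n-1]\cup[n+1,3n]$, exploiting the fact that Bohr neighborhoods of $0$ are unbounded), and second that a non-compact $G_0$ has a countably infinite discrete quotient $T$ (via HR 16.13(c)) for which $T^+$ has all compact sets finite and so fails to be sequentially Ascoli by a cited proposition. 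You instead apply the structure theorem to the dual $\widehat{G}$, produce a single nontrivial null sequence of characters $\chi_n\to\mathbf{1}$ (in the $a=0$ case via the same countable-quotient fact, dualized to an infinite compact metrizable subgroup), transfer it to $\CC(G^+,\TT)$ by Glicksberg, and derive non-equicontinuity uniformly from precompactness of $G^+$ together with the quantitative observation that every nontrivial closed subgroup of $\TT$ meets the complement of the $\sqrt{3}$-ball around $1$; Theorem \ref{t:seq-Ascoli}(v) and Corollary \ref{l:seq-AscoliCat}(i) then finish. Your approach buys a unified, case-light contradiction (no quotient-stability corollary, no appeal to the external proposition on spaces with all compact sets finite) at the cost of working with $\TT$-valued machinery and more duality theory; the paper's buys an argument entirely inside real-valued $\CC(X)$ theory and its own toolkit. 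One point you should make explicit: on a merely totally bounded (non-compact) group, pointwise equicontinuity plus pointwise convergence does \emph{not} in general yield uniform convergence --- your step is valid only because the $\chi_n$ are homomorphisms, so equicontinuity at the identity upgrades to uniform equicontinuity via $\chi_n(x)\overline{\chi_n(y)}=\chi_n(x-y)$; a one-line remark there would close the only soft spot in the write-up.
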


\begin{proof}
If $G$ is compact, then $G^+=G$ is an Ascoli space. Conversely, assume that $G^+$ is sequentially Ascoli. We have to show that $G$ is compact.  By Theorem 24.30 of \cite{HR1}, $G$ is topologically isomorphic to $\IR^n\times G_0$, where $n\in\w$ and $G_0$ is an lca group containing an open compact subgroup $G_1$.

{\em Claim 1. $n=0$ and hence $G=G_0$}. Indeed, suppose for a contradiction that $n>0$. Then $\IR^+$ is a direct summand of $G^+$ (see for example Problem 9.9.N of \cite{ArT}). Hence, by Corollary \ref{c:open-image-Ascoli}, $\IR^+$ is sequentially Ascoli. So to get a contradiction we shall show below that the space $\IR^+$ is {\em not} sequentially Ascoli. Since $\IR^+$ is Tychonoff, for every natural number $n>0$, there is a continuous function $f_n:\IR^+ \to [0,2]$ such that
\[
f_n \big( [-n,n]\big) =\{0\} \; \mbox{ and }\; f_n \big( [-3n,-n-1]\cup[n+1,3n]\big)=\{2\}.
\]
By the Glicksberg theorem, for every compact subset $K$ of $\IR^+$ there is an $n\in\w$ such that $K\subseteq [-n,n]$. This fact and the definition of the functions $f_n$ imply that $f_n\to f_0:=\mathbf{0}$ in $\CC(\IR^+)$. We show that the null-sequence $S=\{ f_n\}_{n\in\w}$ is not equicontinuous at zero $0\in \IR^+$ (which means that $\IR^+$ is  not sequentially Ascoli, see Theorem \ref{t:seq-R-Ascoli}). Indeed, let $\e=1$ and fix an arbitrary open neighborhood $U$ of $0\in \IR^+$. Since the group $\IR^+$ is not locally compact, for every $n\in\w$ the intersection
\[
U\cap \big( (-\infty,-n-1]\cup[n+1,\infty)\big)
\]
is not empty. So there are $n>0$ and $x_n\in U\cap\big([-3n,-n-1]\cup[n+1,3n]\big)$. Then $f_n(x_n)-f_n(0)=2> \e$. Thus $S$ is not  equicontinuous.

{\em Claim 2. $G=G_0$ is compact.} Indeed, suppose for a contradiction that $G$ is not compact. Then the quotient group $G/G_1$ is infinite and discrete. Applying Problem 9.9.N of \cite{ArT} (or a more general Lemma 2.3 of \cite{Gab-Top-Nul}), we obtain $(G/G_1)^+ = G^+/G_1$ and hence, by Corollary \ref{c:open-image-Ascoli},  the group $(G/G_1)^+$ is sequentially Ascoli. By 16.13(c) of \cite{HR1}, the group $G/G_1$ has a subgroup $H$ such that the discrete group $T:=(G/G_1)/H$ is countably infinite. Once again applying Problem 9.9.N of \cite{ArT} and Corollary \ref{c:open-image-Ascoli}, we obtain that the countably infinite group $T^+$ is sequentially Ascoli. By the Glicksberg theorem, every compact subset of $T^+$ is finite. Therefore, by Proposition 2.10 of \cite{Gabr:weak-bar-L(X)}, $T^+$ is not sequentially Ascoli. This contradiction shows that the group $G$ must be compact.\qed
\end{proof}

Every Hausdorff compact space is Ascoli. So it is natural to consider other classes of compact-type spaces. Let us recall that a topological space $X$ is called
\begin{enumerate}
\item[$\bullet$] {\em sequentially compact} if every sequence in $X$ has a convergent subsequence;
\item[$\bullet$] {\em $\w$-bounded} if every sequence in $X$ has compact closure;
\item[$\bullet$] {\em totally countably compact} if every sequence in $X$ has a subsequence with compact closure;
\item[$\bullet$]  {\em near sequentially compact } if for any sequence $(U_n)_{n\in\w}$ of open sets in $X$ there exists a sequence $(x_n)_{n\in\w}\in\prod_{n\in\w}U_n$ containing a convergent subsequence $(x_{n_k})_{k\in\w}$;
\item[$\bullet$] {\em countably compact} if every sequence in $X$ has a cluster point.
\end{enumerate}
Near sequentially compact spaces  were introduced and  studied by Dorantes-Aldama and Shakhmatov \cite{DAS1} as selectively sequentially pseudocompact spaces. However we shall use the term ``near sequentially compact spaces'' since it is more self-suggesting and is a partial case of a more general notion of ``near sequentially compact subset'' introduced in \cite{BG-JNP}.
In the next diagram we summarize the relationships between the above-defined notions and note that none of these implications is reversible (see \cite{DAS1} and \cite[Section~2]{Vaughan}):
\[
\xymatrix{
\mbox{compact} \ar@{=>}[r] & \mbox{$\w$-bounded}  \ar@{=>}[r] & {\substack{\mbox{totally countably} \\ \mbox{compact}}}  \ar@{=>}[r] & {\substack{\mbox{countably} \\ \mbox{compact}}}  \ar@{=>}[d]\\
& {\substack{\mbox{sequentially} \\ \mbox{compact}}}  \ar@{=>}[r]  \ar@{=>}[ru] &  {\substack{\mbox{near sequentially} \\ \mbox{compact}}} \ar@{=>}[r]  & \mbox{pseudocompact}
}
\]


\begin{theorem} \label{t:seq-compact-seq-Ascoli}
Let $X$ be a Tychonoff space satisfying at least one of the next conditions:
\begin{enumerate}
\item[{\rm(i)}] $X$ is totally countably compact;
\item[{\rm(ii)}] $X$ is near sequentially compact.
\end{enumerate}
Then $X$ is a sequentially Asoli space.
\end{theorem}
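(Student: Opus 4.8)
The plan is to reduce everything to the characterization already established in Theorem \ref{t:seq-R-Ascoli}, using the equivalence between sequential Ascoli-ness and condition (vi) there. Thus it suffices to show that, under either hypothesis, every strongly functionally compact-finite sequence $\{A_n:n\in\w\}$ of functionally closed subsets of $X$ is locally finite. So I would fix such a sequence together with functionally open sets $U_n\supseteq A_n$ witnessing strong functional compact-finiteness, i.e.\ with the family $\{U_n:n\in\w\}$ compact-finite. The whole argument then rests on a single observation that unifies the two cases: both compactness-type hypotheses are designed precisely to manufacture, from points chosen in the $U_n$, a compact set that meets infinitely many of them, which is incompatible with their compact-finiteness.

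Concretely, I would argue by contradiction. If $\{A_n\}$ were not locally finite, then infinitely many $A_n$, and hence infinitely many $U_n$, are nonempty (were only finitely many nonempty, the family would trivially be locally finite); after relabelling I may assume $U_n\neq\emptyset$ for every $n\in\w$. Under hypothesis (i) I choose an arbitrary point $x_n\in U_n$ for each $n$ and invoke total countable compactness to extract a subsequence $(x_{n_k})_{k\in\w}$ whose closure $C$ is compact. Under hypothesis (ii) near sequential compactness, applied directly to the sequence $(U_n)_{n\in\w}$ of open sets, furnishes a selection $x_n\in U_n$ admitting a convergent subsequence $x_{n_k}\to x_\ast$; here I set $C:=\{x_\ast\}\cup\{x_{n_k}:k\in\w\}$, which is compact since a convergent sequence together with its limit point is compact. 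In either case $C$ is a compact set satisfying $x_{n_k}\in C\cap U_{n_k}$, so $C$ meets the members $U_{n_k}$ for infinitely many distinct indices, contradicting the compact-finiteness of $\{U_n\}$.

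This contradiction shows that only finitely many $A_n$ are nonempty, whence $\{A_n\}$ is locally finite, and Theorem \ref{t:seq-R-Ascoli} then yields that $X$ is sequentially Ascoli.

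I do not expect a serious obstacle here, since the mathematical content has been front-loaded into Theorem \ref{t:seq-R-Ascoli} and what remains is essentially a reduction. The two points that require care are: (a) passing to the subsequence of nonempty $U_n$ before applying the definition of near sequential compactness, whose formulation selects one point from each open set and hence needs them nonempty; and (b) remembering that compact-finiteness is a property of the enlargements $U_n$ rather than of the closed sets $A_n$ themselves, so the compact set $C$ must be tested against the $U_n$. Since $A_{n_k}\subseteq U_{n_k}$ and the chosen points already lie in the $U_n$, this last point is automatic.
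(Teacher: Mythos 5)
Your proof is correct. It does not reproduce the paper's argument verbatim, but the difference is one of packaging rather than of the key idea. The paper argues directly: assuming $X$ is not sequentially Ascoli, it invokes the equivalence (i)$\Leftrightarrow$(v) of Theorem \ref{t:seq-Ascoli} to obtain a non-equicontinuous null-sequence $\{f_n\}$ in $\CC(X)$, manufactures from it the closed sets $A_n=\{x:|f_n(x)|\geq\e\}$ and the nonempty open sets $B_n=\{x:|f_n(x)|>\e\}\subseteq A_n$, proves by hand that $\{A_n\}$ is compact-finite (using $f_n\to\mathbf{0}$ in $\tau_k$), and then applies the two compactness hypotheses to points chosen in $A_n$ (case (i)) or to the open sets $B_n$ (case (ii)) to contradict compact-finiteness. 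You instead route through clause (vi) of Theorem \ref{t:seq-R-Ascoli}, so the function-level construction never appears in your proof --- it is absorbed into the already-proved characterization (in effect, into Proposition \ref{p:seq-Ascoli-1}(ii)$\Rightarrow$(iii)). What your version buys is a cleaner and in fact slightly stronger intermediate statement: in a totally countably compact or near sequentially compact space, \emph{every} compact-finite sequence of open sets has only finitely many nonempty members, so condition (vi) holds almost vacuously (the witnessing family $\{U_n\}$ can contain only finitely many nonempty sets, whence $\{A_n\}$ is trivially locally finite). The final combinatorial move --- select points in the open sets, extract a subsequence with compact closure (resp.\ a convergent subsequence together with its limit), and test the resulting compact set against compact-finiteness --- is identical in both proofs, down to the same care point: the paper also passes to a subsequence to ensure all $B_n$ are nonempty before applying near sequential compactness, exactly as you do in your point (a). The one structural difference worth noting is that the paper applies the hypotheses to the sets $B_n\subseteq A_n$ sitting \emph{inside} the closed sets, while you apply them to the enlargements $U_n\supseteq A_n$; since in your reduction compact-finiteness is hypothesized of the $U_n$ rather than derived for the $A_n$, your choice is the right one, and your point (b) correctly identifies this.
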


\begin{proof}
Suppose for a contradiction that $X$ is not sequentially Ascoli. Then, by the equivalence (i)$\Leftrightarrow$(v) of Theorem \ref{t:seq-Ascoli}, there exists a null-sequence $\{ f_n:n\in\w\}$ in $\CC(X)$ which is not equicontinuous at some point $x_0\in X$. Replacing $f_n$ by $f_n -f_n(x_0)$ if needed we can assume that $f_n(x_0)=0$ for every $n\in\w$. Then there is an $\e>0$ such that the closed sets
\[
A_n := \{ x \in X: f_n(x) \in (-\infty, -\e]\cup[\e,\infty)\} \quad (n\in\w)
\]
and the open sets
\[
B_n := \{ x \in X: f_n(x) \in (-\infty, -\e)\cup(\e,\infty)\} \quad (n\in\w)
\]
satisfy the condition $x_0\in \overline{\bigcup_{n\in\w} B_n} \setminus \bigcup_{n\in\w} A_n$. Passing to a subsequence if needed we can assume also that all sets $B_n$s are nonempty. Let us show that the family $\AAA=\{ A_n:n\in\w\}$ is compact-finite. Indeed, let $K$ be a compact subset of $X$. Since $f_n\to \mathbf{0}$ in the compact-open topology, there exists an $N\in \w$ such that $f_n\in [K;\e]$ for every $n\geq N$. Therefore, $A_n\cap K=\emptyset$ for every $n\geq N$. Thus $\AAA$ is compact-finite. Now we consider the cases (i) and (ii).

(i) Assume that $X$ is totally countably compact. For every $n\in\w$ choose a point $a_n\in A_n$. Since $X$ is totally countably compact, there is a subsequence $\{a_{n_k}\}_{k\in\w}$ of $\{a_{n}\}_{n\in\w}$ which has compact closure $C$. It is clear that the compact set $C$ intersects infinitely many $A_n$s, and hence $\AAA$ is not compact-finite. This is a desired contradiction.

(ii) Assume that $X$ is near sequentially compact. Since all $B_n$s are open in $X$ and $X$ is near sequentially compact, there is a sequence $(x_n)_{n\in\w}\in\prod_{n\in\w}B_n$ containing a subsequence $(x_{n_k})_{k\in\w}$ that converges to some point $x^\ast\in X$. So the convergent sequence $(x_{n_k})_{k\in\w}\cup\{ x^\ast\}$ intersects infinitely many $A_n$s, and hence $\AAA$ is not compact-finite.  This contradiction finishes the proof.\qed
\end{proof}

However, there are countably compact Tychonoff spaces $X$ which are not sequentially Ascoli as the next proposition shows (for a concrete example of a separable space $X$ satisfying the conditions of this proposition  see \cite[2.13]{Vaughan}).

\begin{proposition} \label{p:countably-compa-not-Ascoli}
Let $X$ be an infinite countably compact zero-dimensional $T_1$-space. If every compact subset of $X$ is finite, then $X$ is not sequentially $2$-Ascoli.
\end{proposition}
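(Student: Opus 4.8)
The plan is to apply Proposition \ref{p:2-Ascoli-sufficient}, which requires producing a compact-finite family of clopen sets clustering at a point. Since $X$ is infinite and $T_1$, it has an infinite discrete sequence of points, and the goal is to build from these a family $\U=\{U_i\}_{i\in\w}$ of clopen sets together with a point $z$ satisfying the two hypotheses (i) and (ii) of that proposition. The key structural facts I would exploit are that $X$ is zero-dimensional (so clopen sets are plentiful), that every compact subset is finite (so compact-finiteness reduces to finiteness of the intersection pattern against finite sets), and that $X$ is countably compact (so any infinite set has a cluster point, supplying the required $z$).

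First I would choose, by countable compactness and the fact that compact subsets are finite, an infinite subset $D=\{d_n:n\in\w\}$ of $X$ consisting of distinct points; since $X$ is $T_1$ and $D$ is infinite, $D$ is not compact and hence not closed, so $D$ has a cluster point $z$ (here is where countable compactness of $X$ is used to guarantee $z$ exists). Next, using zero-dimensionality and the $T_1$ property, for each $n$ I would separate $d_n$ from the finite set of earlier chosen points by a clopen neighborhood $U_n\ni d_n$, shrinking so that the $U_n$ are pairwise disjoint. The crucial point is to arrange that $\{U_n\}_{n\in\w}$ is compact-finite: since every compact $C\subseteq X$ is finite and the $U_n$ are pairwise disjoint, any such $C$ can meet only finitely many $U_n$ provided each $U_n$ contains only finitely many points of $C$'s possible intersection — and disjointness already forces $|\{n:C\cap U_n\neq\emptyset\}|\le |C|<\infty$, giving condition (i) of Proposition \ref{p:2-Ascoli-sufficient} essentially for free. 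Finally, because $z$ is a cluster point of $D\subseteq\bigcup_n U_n$, condition (ii) holds.

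I expect the main obstacle to be the careful selection of the discrete sequence and the clopen sets so that simultaneously (a) the $U_n$ are pairwise disjoint, (b) each $U_n$ avoids the cluster point $z$ (or at least $z\notin\bigcup\overline{U_n}$ is \emph{not} required here — only that $z$ clusters $\bigcup U_n$), and (c) $z$ genuinely clusters the union. The delicate issue is ensuring $z$ is not swallowed into one of the $U_n$ and that the clustering survives the passage to disjointified neighborhoods; I would handle this by choosing the $U_n$ via a recursive construction in which, at stage $n$, I pick a point $d_n$ from the cluster of $z$ distinct from $z$ and outside the finite union $U_0\cup\cdots\cup U_{n-1}$ and outside a clopen separator of $z$ from those finitely many points, then enclose $d_n$ in a small clopen set disjoint from the previous ones and from $z$. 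Then $z\in\overline{\{d_n:n\in\w\}}\subseteq\overline{\bigcup_n U_n}$, and Proposition \ref{p:2-Ascoli-sufficient} yields that $X$ is not sequentially $2$-Ascoli.
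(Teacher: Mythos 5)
Your overall strategy---a pairwise disjoint family of clopen sets, a cluster point, and then Proposition \ref{p:2-Ascoli-sufficient} applied to the characteristic functions---is exactly the paper's argument in spirit (the paper simply inlines the proof of Proposition \ref{p:2-Ascoli-sufficient} rather than citing it). But your execution has a genuine gap: you fix the cluster point $z$ \emph{before} building the family, and your recursive selection does not secure condition (ii). At stage $n$ you require only that $d_n$ lie in one specific clopen neighborhood of $z$ (the complement of $U_0\cup\cdots\cup U_{n-1}$, minus a separator), and nothing forces the countably many chosen points to accumulate at $z$: the point $z$ may have uncountable character, so no countable sequence of neighborhood constraints is cofinal in its neighborhood filter; concretely, if $D=A\cup B$ with $z$ a cluster point of $A$ but not of $B$, your recursion may draw every $d_n$ from $B$, and then $z\notin\overline{\{d_n:n\in\omega\}}$ and $z$ need not cluster $\bigcup_n U_n$ once the sets are shrunk. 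The asserted inclusion $z\in\overline{\{d_n:n\in\omega\}}$ is precisely what the recursion fails to deliver. There is also a secondary defect: separating $d_n$ only from the \emph{earlier} points allows $U_n$ to swallow a later point $d_m$, after which no clopen $U_m\ni d_m$ disjoint from $U_n$ exists; here you genuinely need the relative discreteness you mention only in passing, i.e., clopen witnesses $V_{d_n}$ with $V_{d_n}\cap D=\{d_n\}$ inside which the $U_n$ are taken.

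Both defects disappear with either of two repairs. First repair: keep \emph{all} of $D$. Choose $D$ relatively discrete with clopen witnesses $V_{d_n}$ as above and set $U_n:=V_{d_n}\setminus(U_0\cup\cdots\cup U_{n-1})$; then $d_n\in U_n$ for every $n$, so $D\subseteq\bigcup_n U_n$ and $z\in\overline{D}\subseteq\overline{\bigcup_n U_n}$ with no ``survival'' issue, while $z\notin\bigcup_n U_n$ comes for free: each $U_n$ meets $D$ only in $d_n$, whereas in a $T_1$-space every neighborhood of the cluster point $z$ contains infinitely many points of $D$. Second repair (this is the paper's proof): reverse the quantifiers---first take any pairwise disjoint sequence $\{U_n\}_{n\in\omega}$ of nonempty clopen sets, pick $x_n\in U_n$, and only \emph{then} invoke countable compactness to get a cluster point $z$ of $\{x_n\}_{n\in\omega}$; since each $U_n$ contains only the single term $x_n$, automatically $z\notin\bigcup_n U_n$ and $z$ clusters the union, so the delicate issue you flagged never arises. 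With either repair your reduction is complete: your observation that disjointness plus finiteness of all compact subsets yields $\big|\{n:C\cap U_n\neq\emptyset\}\big|\leq|C|<\infty$ is correct, and Proposition \ref{p:2-Ascoli-sufficient} (with $I=\omega$) then gives that $X$ is not sequentially $2$-Ascoli, which is word for word what the paper proves directly via the null-sequence $\{\mathbf{1}_{U_n}\}_{n\in\omega}$.
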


\begin{proof}
Since $X$ is infinite and Tychonoff, there is a sequence $\{U_{n}\}_{n\in\w}$ of clopen nonempty subsets of $X$ such that $U_n\cap U_m=\emptyset$ for all distinct $n,m\in\w$. For every $n\in\w$ take a point $x_n\in U_n$. Since $X$ is countably compact, there is a cluster point $z$ of the sequence $\{x_{n}\}_{n\in\w}$. Observe that $z\not\in \bigcup_{n\in\w} U_n$ because all $U_n$s are open and pairwise disjoint.

For every $n\in\w$, set $f_n:=\mathbf{1}_{U_n}$. Since all compact subsets of $X$ are finite, the pairwise disjointness of $U_n$s implies that $f_n\to \mathbf{0}$ in $\CC(X,\mathbf{2})$. On the other hand, since any neighborhood $V$ of $z$ contains some $x_{k}$ we have
$
f_k(x_k)-f_k(z)=1.
$
Therefore the null-sequence $\{f_{n}\}_{n\in\w}$ is not equicontinuous. Thus $X$ is not sequentially $2$-Ascoli.\qed
\end{proof}

Proposition \ref{p:P-space-2-Ascoli} and Theorem \ref{t:seq-compact-seq-Ascoli} motivate the following problem.
\begin{problem}
Does there exist a totally countably compact {\rm ((}near{\rm )} sequentially compact or $\w$-bounded{\rm )}  Tychonoff space $X$ which is not Ascoli?
\end{problem}



\section{A characterization of locally compact $\mu$-spaces}  \label{sec:product-Ascoli}


Essentially using a result of Whitehead \cite[Lemma~4]{Whitehead}, Cohen proved that, if $Z$ is a locally compact Hausdorff space, then $Z\times X$ is a $k$-space for every $k$-space $X$. In \cite{Michael-lc} Michael proved the converse assertion. So we obtain a characterization of locally compact spaces, see \cite[3.12.14(c)]{Eng}: {\em A Tychonoff space $Z$ is locally compact if and only if the product $Z\times X$ is a $k$-space for every $k$-space $X$.} This result motivates the following problem:
\begin{problem} \label{prob:product-Ascoli}
Is it true that  a Tychonoff space $Z$ is locally compact if and only if the product $Z\times X$ is a (sequentially) Ascoli space for every (sequentially) Ascoli space  $X$?
\end{problem}
In Theorem \ref{t:Ascoli-product-mu} below, for the case of Ascoli spaces,  we obtain a positive answer to this problem under additional assumption that $X$ is a $\mu$-space. Recall that a topological space is called a {\em $\mu$-space} if every  functionally bounded subset of $X$ is relatively compact (a  subset $A$ of $X$ is called {\em functionally bounded} if $f(A)$ is a bounded subset of $\IR$ for every $f\in C(X)$). First we prove the following analogue of the Whitehead--Cohen result mentioned above.

\begin{theorem} \label{t:product-lc-Ascoli}
Let $Z$ be a locally compact space. If $X$ is an Ascoli space, then so is $Z\times X$.
\end{theorem}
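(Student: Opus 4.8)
The plan is to use the characterization of Ascoli spaces via the canonical valuation map that the paper recalls just before Theorem \ref{t:seq-Ascoli}: $X$ is Ascoli if and only if $\delta\colon X\to \CC(\CC(X),\IR)$ is continuous, and more usefully, to argue directly with even continuity of compact sets in $\CC(Z\times X)$. Let $\KK$ be a compact subset of $\CC(Z\times X)$; I must show the evaluation map $(Z\times X)\times \KK \to \IR$ is continuous at an arbitrary point $\big((z_0,x_0),h_0\big)$. The key idea, following the Whitehead--Cohen strategy, is to exploit local compactness of $Z$ to ``freeze'' the $Z$-coordinate inside a fixed compact neighborhood and then transfer the even-continuity problem to the Ascoli space $X$.

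First I would fix a compact neighborhood $Q$ of $z_0$ in $Z$ (using local compactness). The point of $Q$ is that restriction to $Q\times X$ together with the exponential/adjunction correspondence for the compact-open topology turns functions on $Z\times X$ into $X$-indexed families of functions on the compact set $Q$. Concretely, for a compact Hausdorff $Q$ the natural map sending $h\in \CC(Q\times X)$ to the curried function $\hat h\colon X\to \CC(Q)$, $\hat h(x)=h(\cdot,x)$, is a homeomorphism onto its image (this is the standard exponential law $\CC(Q\times X)\cong \CC(X,\CC(Q))$ valid because $Q$ is locally compact, indeed compact). Thus the restricted compact set $\KK|_{Q\times X}$ corresponds to a compact set $\hat{\KK}$ in $\CC\big(X,\CC(Q)\big)$. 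The plan is then to compose with evaluation at points of $Q$: for each $z\in Q$ the map $\mathrm{ev}_z\colon \CC(Q)\to\IR$ is continuous, and I want to produce, from the Ascoli property of $X$, a single neighborhood of $x_0$ that works simultaneously for all $z$ near $z_0$ and all $g$ near $h_0$ in $\KK$.

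The mechanism to get that uniformity is to observe that even continuity of $\KK$ at the given point reduces, via the adjunction, to even continuity of the compact family $\hat{\KK}\subseteq \CC(X,\CC(Q))$ at $(x_0,\hat h_0)$, where the target $\CC(Q)$ is a metrizable (since $Q$ is compact metrizable? — not in general) space; so I should be careful and instead argue with even continuity valued in the \emph{space} $\CC(Q)$ rather than in $\IR$. The correct route is: by Corollary \ref{l:seq-AscoliCat}(i) an Ascoli space $X$ is $Y$-Ascoli for every Tychonoff space $Y$, in particular $X$ is $\CC(Q)$-Ascoli. Hence every compact subset of $\CC\big(X,\CC(Q)\big)$ is evenly continuous. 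Applying this to $\hat{\KK}$ at $(x_0,\hat h_0)$ yields a neighborhood $O_{x_0}$ of $x_0$ in $X$ and a neighborhood $V$ of $\hat h_0$ in $\hat\KK$ such that $\hat g(x)$ stays in a prescribed $\CC(Q)$-neighborhood of $\hat h_0(x_0)=h_0(z_0,\cdot)|_Q$ for all $x\in O_{x_0}$ and all $\hat g\in V$. Unwinding the compact-open neighborhood in $\CC(Q)$ back to pointwise control over $z$, and using that $z_0$ has the compact neighborhood $Q$, produces an open $O_{z_0}\subseteq Q$ and a neighborhood of $h_0$ in $\KK$ on which $|g(z,x)-h_0(z_0,x_0)|$ is small. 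This gives even continuity of $\KK$ at $\big((z_0,x_0),h_0\big)$, as required.

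The main obstacle I anticipate is making the exponential correspondence rigorous and ensuring that the compact set $\KK\subseteq\CC(Z\times X)$ really does correspond, after restriction to $Q\times X$, to a \emph{compact} subset of $\CC\big(X,\CC(Q)\big)$ to which the $\CC(Q)$-Ascoli property of $X$ applies; this requires that the adjunction map be a homeomorphism onto its image (hence preserves compactness), which is exactly where local compactness of $Z$ (furnishing the compact $Q$) is used. A secondary technical point is the translation of even continuity valued in $\CC(Q)$ back to numerical even continuity on $Z\times X$: one must choose the $\CC(Q)$-neighborhood of the form $[Q;\e]$-type (a uniform-on-$Q$ ball around the restriction $h_0(z_0,\cdot)$) and then use continuity of $z\mapsto h_0(z,x_0)$ on the compact $Q$ to absorb the variation in the $z$-coordinate into the chosen $\e$. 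Provided these two bookkeeping steps are handled, the argument reduces cleanly to invoking that $X$, being Ascoli, is $\CC(Q)$-Ascoli.
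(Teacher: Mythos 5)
Your proof is correct in substance, but it takes a genuinely different route from the paper's. The paper works with the map $T:\overline{U}\times K\to\CC(X)$, $T(z,f)(x):=f(z,x)$, proves its joint continuity by a direct $\e/3$-argument, concludes that $T(\overline{U}\times K)$ is a compact and hence (by the Ascoli property of $X$) equicontinuous subset of $\CC(X)$, and then handles the $z$-variable with the restriction map $R(f):=f(\cdot,x_0)$ into $\CC(Z)$, using that $Z$, being locally compact, is itself Ascoli; the outcome is the stronger conclusion that $K$ is equicontinuous. You instead curry in the other variable: restrict to $Q\times X$ for a compact neighborhood $Q$ of $z_0$, pass to $\hat{\KK}\subseteq\CC\big(X,\CC(Q)\big)$, and invoke the $\CC(Q)$-Ascoli property of $X$. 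This buys you two things the paper's argument does not exploit: you never use the Ascoli property of $Z$ at all (continuity of the single function $h_0$ in the $z$-variable on $Q$ suffices for the final triangle inequality), and the unwinding is painless because for compact $Q$ the compact-open topology on $\CC(Q)$ is the sup-norm topology, so the uniform $\e$-ball around the function $z\mapsto h_0(z,x_0)$ is a legitimate open neighborhood. The cost is the machinery you must verify, and here two points need repair. First, Corollary \ref{l:seq-AscoliCat}(i), which you cite, is stated only for \emph{sequentially} Ascoli spaces; the compact version you actually need (Ascoli implies $Y$-Ascoli for every Tychonoff $Y$) is available, but from Proposition 5.2 of \cite{BG} together with the embedding $\CC(Q)\hookrightarrow\IR^\kappa$, so the citation should be adjusted. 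Second, for the adjunction step a full exponential-law homeomorphism is overkill: continuity of the currying map $\CC(Q\times X)\to\CC\big(X,\CC(Q)\big)$ already transports compactness, and it holds because the preimage of a subbasic set $[C;[K;V]]$ is $[K\times C;V]$, combined with the standard Jackson-type lemma that the sets $[C;W]$, with $W$ running over a subbase of $\CC(Q)$, form a subbase of $\CC\big(X,\CC(Q)\big)$ (valid here since compact subsets of the Tychonoff space $X$ are regular). Two small slips to correct when writing this up: $\hat h_0(x_0)$ is the function $z\mapsto h_0(z,x_0)$ on $Q$, not ``$h_0(z_0,\cdot){\restriction}_Q$''; and your hesitation about metrizability of $\CC(Q)$ is unnecessary, since $\CC(Q)$ is sup-norm metrizable for \emph{every} compact Hausdorff $Q$ --- though your detour through the $\CC(Q)$-Ascoli property cleanly avoids needing this fact anyway.
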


\begin{proof}
Let $K$ be a compact subset of $\CC(Z\times X)$. We have to show that $K$ is equicontinuous. Fix $\delta>0$ and a point $(z_0,x_0)\in Z\times X$.

Choose an open neighborhood $U$ of $z_0$ with compact closure.
Define a map $T: \overline{U}\times K \to \CC(X)$ by $T(z,f)(x):=f(z,x)\in \CC(X)$. We claim that $T$ is continuous. Indeed, fix an arbitrary point $(y,g)\in \overline{U}\times K$ and an open standard neighborhood $T(y,g)+ [C; \e]$ of the function $T(y,g)\in \CC(X)$, where $C\subseteq X$ is compact and $\e>0$. For every $x\in C$, choose an open neighborhood $V_x\subseteq \overline{U}$ of $y$ and an open neighborhood $O_x \subseteq X$ of $x$ such that
\begin{equation} \label{equ:product-Ascoli-1}
|g(z,t)-g(y,x)|<\tfrac{\e}{3}\; \mbox{ for all } \; z\in V_x \; \mbox{ and } \; t\in O_x.
\end{equation}
Since $C$ is compact, there are $x_1,\dots,x_s\in C$ such that $C\subseteq \bigcup_{i=1}^s O_{x_i}$. For every $x\in C$, choose $i(x)\in\{1,\dots,s\}$ such that $x\in O_{x_{i(x)}}$ and set $V:=\bigcap_{i=1}^s V_{x_i}\subseteq \overline{U}$. Then $V$ is an open neighborhood of $y$ in $\overline{U}$. Now, for every $z\in V$, $x\in C$ and each function $g+h\in \big(g+\big[ \overline{U}\times C; \tfrac{\e}{3}\big]\big)\cap K$, (\ref{equ:product-Ascoli-1}) implies
\[
\begin{aligned}
|T(z,g& +h)(x)  -T(y,g)(x)|  =|g(z,x)+h(z,x)-g(y,x)|\\
& \leq|g(z,x)-g(y,x_{i(x)})| + |g(y,x_{i(x)})-g(y,x)|+  |h(z,x)| <\tfrac{\e}{3}+\tfrac{\e}{3}+\tfrac{\e}{3}=\e.
\end{aligned}
\]
Thus $T$ is continuous, and hence the set $T(\overline{U}\times K)\subseteq \CC(X)$ is compact.

Since $X$ is an Ascoli space, it follows that the compact set $T(\overline{U}\times K)$ is equicontinuous. In particular,  there is an open neighborhood $\mathcal{O}$ of $x_0$ such that
\begin{equation} \label{equ:product-Ascoli-2}
|f(z,x)-f(z,x_0)|=|T(z,f)(x)-T(z,f)(x_0)| <\tfrac{\delta}{2} \; \mbox{ for all } \;  x\in \mathcal{O}, z\in  \overline{U} \; \mbox{ and } \; f\in K.
\end{equation}

Consider the restriction map $R: \CC(Z\times X)\to \CC(Z)$ defined by $R(f):= f(z,x_0)$. Then $R(K)$ is a compact subset of $\CC(Z)$. The space $Z$ being locally compact is Ascoli. Therefore there is an open neighborhood $W\subseteq U$ of $z_0$ such that
\begin{equation} \label{equ:product-Ascoli-3}
|f(z,x_0)-f(z_0,x_0)|=|R(f)(z)-R(f)(z_0)| <\tfrac{\delta}{2} \; \mbox{ for all } \; z\in  W \; \mbox{ and } \; f\in K.
\end{equation}
Now, for every $(z,x)\in W\times \mathcal{O}$ and each $f\in K$,  (\ref{equ:product-Ascoli-2}) and (\ref{equ:product-Ascoli-3}) imply
\[
|f(z,x)-f(z_0,x_0)|\leq |f(z,x)-f(z,x_0)|+|f(z,x_0)-f(z_0,x_0)|<\delta.
\]
Therefore $K$ is equicontinuous. Thus $Z\times X$ is an Ascoli space.\qed
\end{proof}




We say that a topological space $X$ is {\em locally functionally bounded} if every point of $X$ has a functionally bounded neighborhood.
To prove Theorem \ref{t:Ascoli-product-mu}  we need the following assertion.

\begin{proposition} \label{p:product-Ascoli}
If a  Tychonoff space $Z$ is not locally functionally bounded, then there is a Fr\'{e}chet--Urysohn fan $X$ such that the product $Z\times X$ is not an Ascoli space.
\end{proposition}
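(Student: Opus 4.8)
The plan is to construct, from the failure of local functional boundedness at some point $z_0\in Z$, a sequence of functionally bounded sets that ``escapes'' every neighborhood of $z_0$, and to package this combinatorial data into a Fr\'echet--Urysohn fan $X=V(\kappa)$ together with a compact set in $\CC(Z\times X)$ witnessing the failure of the Ascoli property. Since $z_0$ has no functionally bounded neighborhood, for every neighborhood $U$ of $z_0$ the set $U$ is not functionally bounded, so there is a continuous function on $Z$ unbounded on $U$. First I would use this to produce, for a fixed decreasing (or at least cofinal) base of neighborhoods, a countable family of points and functions exhibiting unboundedness arbitrarily close to $z_0$; more precisely I expect to extract a sequence of points $z_n\to z_0$ (or clustering at $z_0$) together with continuous functions $g_n\in C(Z)$ such that $g_n(z_n)$ is large while $g_n$ is controlled on a prescribed compact set away from $z_n$.

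The key idea is then to transfer this to the product. The fan $X=V(\kappa)$ has its isolated spikes $S_i^-$ converging to the single non-isolated point $x_\infty$, and crucially every compact subset of $X$ meets only finitely many spikes. I would define functions $f_n\in C(Z\times X)$ that are supported, in the $X$-coordinate, on the $n$-th spike (so they vanish on all but finitely many spikes over any compact $C\subseteq X$) and that in the $Z$-coordinate reproduce the unbounded behavior of the $g_n$ near $z_0$. The compact-finiteness of the spikes is exactly what makes the collection $\KK:=\{f_n\}\cup\{\mathbf 0\}$ (or its closure) compact in $\CC(Z\times X)$: over any compact $C=C_Z\times C_X$ only finitely many $f_n$ are nonzero, so the $f_n$ converge to $\mathbf 0$ uniformly on compacta. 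On the other hand the sets $A_n:=\{(z,x): |f_n(z,x)|\ge \e\}$ will cluster at the point $(z_0,x_\infty)$ without being locally finite there, so $\KK$ fails to be equicontinuous at $(z_0,x_\infty)$. This is precisely the configuration of Proposition~\ref{p:Ascoli-sufficient}, which I would invoke to conclude that $Z\times X$ is not Ascoli.

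Concretely I would try to arrange, for each $n$, a point $(z_n,x_n)$ with $x_n$ on the $n$-th spike and $f_n(z_n,x_n)$ bounded below by a fixed $\e>0$, while $f_n$ is small on a neighborhood of the ``rest'' of $Z\times X$; the cluster of the $z_n$ at $z_0$ together with $x_n\to x_\infty$ forces $(z_0,x_\infty)$ to be a cluster point of $\{(z_n,x_n)\}$, giving clause (iii) of Proposition~\ref{p:Ascoli-sufficient}, while clause (ii) follows from compact-finiteness of the spikes combined with the construction of $f_n$. The number $\kappa$ of spikes may simply be taken to be $\w$, so $X=V(\w)$ suffices.

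The main obstacle I anticipate is the quantitative control in the $Z$-direction: I must ensure simultaneously that the $f_n$ genuinely converge to $\mathbf 0$ in the compact-open topology on $Z\times X$ (which needs each $g_n$ to be uniformly small on any fixed compact subset of $Z$ for all but finitely many $n$) and yet remain bounded below by $\e$ at the escaping points $z_n$. Reconciling these requires choosing the neighborhoods of $z_0$ and the unbounded functions $g_n$ carefully — in effect building the $g_n$ so that the locus where $|g_n|\ge\e$ recedes from every compact set not containing a neighborhood of $z_0$, which is exactly where the \emph{failure} of functional boundedness of each neighborhood of $z_0$ is used rather than mere non-compactness. Handling the case where $z_0$ has a countable versus uncountable neighborhood base cleanly, so that the single escaping sequence suffices, is the delicate point; once the $g_n$ are in hand, the rest is the routine verification of the hypotheses of Proposition~\ref{p:Ascoli-sufficient}.
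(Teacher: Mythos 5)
There is a genuine gap, and it is fatal to the construction as you describe it: your cluster-point claim is false in the fan. You place one witness point $x_n$ on the $n$-th spike and assert that ``$x_n\to x_\infty$'' forces $(z_0,x_\infty)$ to be a cluster point of $\{(z_n,x_n)\}$. But a set meeting each spike in at most one point is closed and discrete in $V(\kappa)$: the set $W:=V(\kappa)\SM\{x_n:n\in\w\}$ omits only finitely many (namely, at most one) points of each $S_i$, hence is an open neighborhood of $x_\infty$ containing no $x_n$ whatsoever. So the diagonal sequence does not converge to $x_\infty$, clause (iii) of Proposition \ref{p:Ascoli-sufficient} fails, and the whole argument collapses. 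The paper's proof avoids this by a different combinatorial design: the spikes are indexed not by $\w$ but by an entire open base $\{V_\alpha\}_{\alpha\in\AAA}$ at $z_0$, and along the spike $S_\alpha$ one places \emph{infinitely many} witnesses $a_{n,\alpha}=(z_{n,\alpha},x_{n,\alpha})$ whose $Z$-coordinates all lie in $V_\alpha$. Then any basic neighborhood $U\times W$ of $(z_0,x_\infty)$ contains some $V_\alpha\subseteq U$, and since $W$ contains cofinitely many points of the spike $S_\alpha$, it catches cofinally many $a_{n,\alpha}$; this pairing of each basic neighborhood with its own spike is precisely the mechanism your proposal lacks. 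For the same reason your claim that ``$\kappa=\w$ suffices'' is unjustified: the needed number of spikes is the character of $Z$ at $z_0$, which may be uncountable (the proposition only asserts the existence of \emph{some} fan), and with countably many spikes your scheme has in general no way to make the $Z$-coordinates of a countable witness set cluster at a point $z_0$ of uncountable character, e.g.\ when $z_0$ lies in the closure of no countable subset of $Z\SM\{z_0\}$.

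A secondary point: the ``delicate quantitative control'' you anticipate for building continuous functions $f_n\to\mathbf{0}$ in $\CC(Z\times X)$ is unnecessary, and you never actually carry it out. Proposition \ref{p:Ascoli-sufficient} requires no functions at all, only a family of open sets, points, and a cluster point. In the paper, the failure of functional boundedness of $V_\alpha$ yields a function $f_\alpha\in C(Z)$ unbounded on $V_\alpha$, from which one extracts a \emph{discrete} family $\{V_{n,\alpha}\}_{n\in\w}$ of open subsets of $V_\alpha\SM\{z_0\}$ (level sets around points where $|f_\alpha|$ escalates; this is where unboundedness, rather than mere non-compactness, is used, matching your intuition that the large-value locus ``recedes'' from compacta). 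Setting $U_{n,\alpha}:=V_{n,\alpha}\times\{x_{n,\alpha}\}$, compact-finiteness is then automatic: a compact $C\subseteq Z\times V(\AAA)$ projects into finitely many spikes, and per spike the discreteness of $\{V_{n,\alpha}\}_n$ lets $\pi_Z(C)$ meet only finitely many $V_{n,\alpha}$. Your proposal contains the right raw ingredients (unbounded functions near $z_0$, compact-finiteness of the spikes), but without the base-indexed fan, the infinitely-many-witnesses-per-spike layout, and the discrete-family extraction, the two hypotheses of Proposition \ref{p:Ascoli-sufficient} cannot both be met.
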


\begin{proof}
Let $z_0$ be a point of $Z$ which does not have a functionally bounded neighborhood. Fix an open base $\{ V_\alpha\}_{\alpha\in\AAA}$ of $z_0$. For every $\alpha\in\AAA$, choose a function $f_\alpha\in C(X)$ such that $f_\alpha{\restriction}_{V_\alpha}$ is unbounded. Then we can choose a discrete sequence  $\{ V_{n,\alpha}\}_{n\in\w}$ in $X$ such that all $V_{n,\alpha}$ are open subsets of $V_{\alpha}\SM \{z_0\}$. For every $n\in\w$, choose an arbitrary point $z_{n,\alpha}\in V_{n,\alpha}$. Consider the Fr\'{e}chet--Urysohn fan $X:=V(\AAA)$ with the unique non-isolated point $x_\infty$ (so $S_\alpha=\{ x_{n,\alpha}\}\cup\{ x_\infty\}$ and $x_{n,\alpha}\to x_\infty$ as $n\to\infty$). For each $\alpha\in\AAA$ and every $n\in\w$, set
\[
a_{n,\alpha}:= (z_{n,\alpha}, x_{n,\alpha})\in Z\times V(\AAA), \quad U_{n,\alpha}:= V_{n,\alpha} \times \{ x_{n,\alpha}\}.
\]
It is clear that $U_{n,\alpha}$ is an open neighborhood of $a_{n,\alpha}$ and $(z_0,x_\infty)$ is a cluster point of the family $\{a_{n,\alpha}:\alpha\in\AAA,n\in\w\}$. Therefore, to apply Proposition \ref{p:Ascoli-sufficient} we have to check that for every compact subset $C$ of $Z\times V(\AAA)$, the set
\[
\AAA_0 :=\{ (n,\alpha)\in \w\times \AAA: \; U_{n,\alpha}\cap C\not=\emptyset\}
\]
is finite. To this end, consider the projection $K:=\pi_{V(\AAA)}(C)$ of $C$ into $V(\AAA)$. Then $K$ is a compact subset of $V(\AAA)$, and hence there is a finite subset $I\subseteq \AAA$ such that  $\{ x_{n,\alpha}\}\cap K\not=\emptyset$ only if  $\alpha\in I$. Now,  for every $\alpha\in I$ the family $\{ \pi_Z(U_{n,\alpha})=V_{n,\alpha}: n\in\w\}$ of projections onto $Z$ is discrete in $Z$. Therefore,  for every $\alpha\in I$, the family
\[
\{ n\in\w: V_{n,\alpha} \cap \pi_Z(C)\not=\emptyset\}
\]
is finite. Thus $\AAA_0$ is finite.
Finally, Proposition \ref{p:Ascoli-sufficient} implies that the space $Z\times V(\AAA)$ is not Ascoli. \qed
\end{proof}

\begin{theorem} \label{t:Ascoli-product-mu}
For a $\mu$-space $Z$ the following assertions are equivalent:
\begin{enumerate}
\item[{\rm (i)}] $Z$ is locally compact;
\item[{\rm (ii)}] $Z\times X$ is an Ascoli space for each Ascoli space $X$;
\item[{\rm (iii)}] $Z\times X$ is an Ascoli space for each Fr\'{e}chet--Urysohn space $X$.
\end{enumerate}
\end{theorem}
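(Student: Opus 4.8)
The plan is to establish the cycle (i)$\Rightarrow$(ii)$\Rightarrow$(iii)$\Rightarrow$(i), using the two results just proved together with one observation special to $\mu$-spaces. The genuine content lies almost entirely in the previously established Theorem \ref{t:product-lc-Ascoli} and Proposition \ref{p:product-Ascoli}; the theorem itself is the bookkeeping that stitches them together.

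First, (i)$\Rightarrow$(ii) is immediate from Theorem \ref{t:product-lc-Ascoli}: if $Z$ is locally compact and $X$ is any Ascoli space, then $Z\times X$ is Ascoli. The implication (ii)$\Rightarrow$(iii) is trivial, since every Fr\'{e}chet--Urysohn space is Ascoli (by the displayed diagram it is even $\kappa$-Fr\'{e}chet--Urysohn, and $\kappa$-Fr\'{e}chet--Urysohn $\Rightarrow$ Ascoli). Hence the only implication carrying real weight is (iii)$\Rightarrow$(i), which I would prove by contraposition.

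For (iii)$\Rightarrow$(i), the key preliminary step is to record that \emph{for a $\mu$-space $Z$, local compactness is equivalent to local functional boundedness}. One direction is general: if $Z$ is locally compact, each point has a neighborhood with compact, hence functionally bounded, closure, so $Z$ is locally functionally bounded. For the converse I would invoke the $\mu$-space hypothesis: if a point $z$ has a functionally bounded neighborhood $V$, then $\overline{V}$ is again functionally bounded (the closure of a functionally bounded set is functionally bounded, because $f(\overline{V})\subseteq\overline{f(V)}$ for every $f\in C(Z)$), and being a $\mu$-space forces $\overline{V}$ to be relatively compact, i.e. compact; thus $z$ has a neighborhood with compact closure and $Z$ is locally compact.

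With this dictionary in hand the argument closes at once. Assuming (iii) but supposing for a contradiction that $Z$ is not locally compact, the equivalence above shows that $Z$ is not locally functionally bounded. Proposition \ref{p:product-Ascoli} then produces a Fr\'{e}chet--Urysohn fan $X=V(\AAA)$ for which $Z\times X$ fails to be Ascoli; since a Fr\'{e}chet--Urysohn fan is Fr\'{e}chet--Urysohn, this contradicts (iii) and completes the proof. I expect the only subtle point to be the $\mu$-space reduction from functional boundedness to compactness, where one must check the closure of a functionally bounded set remains functionally bounded; all the hard analytic work has already been absorbed into Theorem \ref{t:product-lc-Ascoli} and Proposition \ref{p:product-Ascoli}, so once this reduction is in place the remaining steps are purely formal.
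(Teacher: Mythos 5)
Your proof is correct and follows essentially the same route as the paper: (i)$\Rightarrow$(ii) via Theorem \ref{t:product-lc-Ascoli}, (ii)$\Rightarrow$(iii) trivially since Fr\'{e}chet--Urysohn spaces are Ascoli, and (iii)$\Rightarrow$(i) via Proposition \ref{p:product-Ascoli} combined with the $\mu$-space hypothesis, with your contrapositive phrasing being only a cosmetic difference from the paper's direct one. Your extra verification that $\overline{V}$ is functionally bounded is harmless but unnecessary: the definition of a $\mu$-space already gives that the functionally bounded set $V$ itself is relatively compact, so $\overline{V}$ is compact at once, which is exactly the paper's shortcut.
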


\begin{proof}
The implication (i)$\Rightarrow$(ii) follows from Theorem \ref{t:product-lc-Ascoli}, and (ii)$\Rightarrow$(iii) is trivial.

(iii)$\Rightarrow$(i) By Proposition \ref{p:product-Ascoli}, the space $Z$ is locally functionally bounded. Fix a point $z\in Z$ and its functionally bounded neighborhood $V_z$. Since $Z$ is a $\mu$-space, it follows that the closure $\overline{V_z}$ of $V_z$ is a compact subset of $Z$. So $z$ has a compact neighborhood  $\overline{V_z}$. Thus $Z$ is locally compact.\qed
\end{proof}


\section{Sequentially Ascoli function spaces} \label{sec:seq-Ascoli-Cp} 


Let $X$ be a Tychonoff space and let $(Y,\rho)$ be a metric space. We denote by $Y^X$ the family of all maps from $X$ to $Y$,  and let $E$ be a subspace of $Y^X$. In particular, if $E$ is  endowed with the pointwise topology, then for a function $f\in E$, a finite subset $A$ of $X$ and $\e>0$, we set
\[
W[f;A,\e] := \big\{ g\in E: \rho\big(f(x),g(x)\big)<\e \mbox{ for every } x\in A\big\},
\]
so $W[f;A,\e]$ is a standard open neighborhood of $f$ in $E$.

If additionally $Y$ is a (metrizable) locally convex space, there is a long tradition to investigate locally convex properties of the space $C(X,Y)$ endowed with the pointwise topology or the compact-open topology by means of topological properties of $X$ and locally convex properties of $Y$. Of course the most important case is the case when $Y$ is a Banach space. For numerous results obtained in the eighties of the last century and historical remarks we refer the reader to the well known lecture notes of Schmets \cite{Schmets}.

The study of topological properties of spaces $C_p(X,Y)$ and  $\CC(X,Y)$, when $X=\IR, [0,1]$ or the doubleton $\mathbf{2}=\{0,1\}$, is one of the main direction in General Topology. We refer the reader to the books \cite{Arhangel,mcoy,Tkachuk-Book-2010} and the papers \cite{BG,Gabr-B1,Gabr-C2,GGKZ,GKP,GO,Pol-1974,Sak2,Sakai-3} and references therein.

Although the spaces $C_p(X,Y)$ and  $\CC(X,Y)$ are the most important, there are other important classes of (discontinuous) functions from $X$ to $Y$ widely studied in General Topology and Analysis as, for example, the classes of bounded/precompact continuous functions and the classes of Baire functions (we recall their definitions below). So the question of the study of topological properties of these classes of functions arises naturally. It turns out that the study of the aforementioned classes of functions can be realized simultaneously using the following idea successfully applied in  \cite{Gabr-B1}: instead the whole space $C(X,Y)$ we consider only some of its sufficiently rich subspaces in the sense of Definitions \ref{def:relatively-Y-Tych} and \ref{def:rel-p-Tych}.

The main purpose of this section is to characterize those spaces $X$ for which the function space $C_p(X,Y)$ and classes of Baire functions are (sequentially) Ascoli, also we obtain a necessary condition on $X$ for which the space $\CC(X)$ is sequentially Ascoli. To this end, we shall essentially use the ideas from \cite{Gabr-B1,GGKZ} and the papers of Sakai \cite{Sak2,Sakai-3}; in fact (the proofs of) Lemmas \ref{l:Cp-Ascoli-1} and \ref{l:Cp-Ascoli-2},  Proposition \ref{p:Cp-seq-Ascoli} and  Proposition \ref{p:Cp-kappa-FU} given below are corresponding modifications and extensions (of the proofs) of Lemmas 2.1 and 2.2 of \cite{GGKZ}, Theorem~1.1 of \cite{GGKZ} and  Theorem~2.1 of \cite{Sak2}, and of Theorem 2.1 of \cite{Sak2}, respectively (nevertheless we give their detailed proofs for the reader convenience and to make the paper self-contained).

\begin{lemma} \label{l:Cp-Ascoli-1}
Let  $(Y,\rho)$ be a metric space, $X$ be a set, $E$ be a dense subspace of the topological product $Y^X$, and let $g\in E$. For every $n\in\w$, let $U_n$ be an open subset of $E$ such that $g\in\overline{U_n}$, and let $\W_n$ be an open cover of $U_n$. Then for every $n\in\w$, there exists $W_n\in\W_n$ such that $g\in\overline{\bigcup\{W_n:n\in\w\}}$.
\end{lemma}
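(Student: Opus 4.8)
The plan is to verify directly that, for a suitable choice of the $W_n$, every basic neighbourhood of $g$ in $E$ meets $\bigcup_{n\in\w}W_n$; since a basic neighbourhood of $g$ in the pointwise topology has the form $W[g;A,\e]$ with $A\subseteq X$ finite and $\e>0$, this is exactly the assertion $g\in\overline{\bigcup_{n\in\w}W_n}$. The essential difficulty is that there may be uncountably many coordinates in $X$, so no countable family of \emph{points} can approximate $g$ in every neighbourhood; the role of the density of $E$ in $Y^X$ is to let us \emph{fill in} arbitrary coordinates of a witness once its values on a fixed finite set have been prescribed. Accordingly, I would build the $W_n$ so that each of them is pinned down on only finitely many coordinates, all lying in a single countable ``tower'' on which the chosen witnesses converge to $g$.

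Concretely, I would construct by recursion on $n\in\w$ an increasing sequence of finite sets $A_0\subseteq A_1\subseteq\cdots\subseteq X$ (starting from $A_0=\emptyset$), together with points $h_n\in E$ and sets $W_n\in\W_n$, as follows. Since $g\in\overline{U_n}$, the neighbourhood $W[g;A_n,2^{-n}]$ meets $U_n$; choose $h_n\in W[g;A_n,2^{-n}]\cap U_n$, so that $\rho\big(h_n(x),g(x)\big)<2^{-n}$ for $x\in A_n$. As $\W_n$ covers $U_n\ni h_n$, pick $W_n\in\W_n$ with $h_n\in W_n$. Because $W_n$ is open in $E$, there are a finite set $B_n\subseteq X$ and $\delta_n>0$ with $W[h_n;B_n,\delta_n]\subseteq W_n$; put $A_{n+1}:=A_n\cup B_n$. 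Writing $T:=\bigcup_{n\in\w}A_n$, the tower property gives, for each fixed $x\in T$, that $x\in A_n$ for all large $n$ and hence $\rho\big(h_n(x),g(x)\big)<2^{-n}$ eventually, i.e. $h_n(x)\to g(x)$; thus $h_n\to g$ pointwise on $T$, and note that $B_n\subseteq A_{n+1}\subseteq T$ for every $n$.

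It then remains to check $g\in\overline{\bigcup_n W_n}$. Fix a basic neighbourhood $N=W[g;A,\e]$ of $g$. Since $A\cap T$ is finite and $h_n\to g$ pointwise on $T$, there is an $n$ with $\rho\big(h_n(x),g(x)\big)<\e/2$ for all $x\in A\cap T$, and in particular for all $x\in A\cap B_n$. Define $\phi\in Y^X$ by $\phi(x):=h_n(x)$ for $x\in B_n$ and $\phi(x):=g(x)$ for $x\in A\setminus B_n$ (and arbitrarily elsewhere). By the density of $E$ in $Y^X$, applied to the finite set $A\cup B_n$ and $\eta:=\min(\delta_n,\e/2)$, there is $f\in E$ with $\rho\big(f(x),\phi(x)\big)<\eta$ for all $x\in A\cup B_n$. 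Then $f\in W[h_n;B_n,\delta_n]\subseteq W_n$, while on $A$ one checks $\rho\big(f(x),g(x)\big)<\e$ (using $\rho\big(h_n(x),g(x)\big)<\e/2$ together with $\rho\big(f(x),h_n(x)\big)<\e/2$ on $A\cap B_n$, and $\phi(x)=g(x)$ on $A\setminus B_n$), so $f\in N$. Hence $N\cap W_n\neq\emptyset$, and since $N$ was arbitrary, $g\in\overline{\bigcup_n W_n}$.

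The step I expect to be the crux is the verification paragraph: the whole point is that the only coordinates on which membership in $W_n$ constrains a witness are the finitely many coordinates $B_n\subseteq T$, on which $h_n$ is already close to $g$, so that density is free to adjust the remaining coordinates to match $g$ on $A$. Ensuring this by forcing $B_n$ into the tower (via $A_{n+1}\supseteq B_n$) and thereby obtaining $h_n\to g$ on $T$ is the key design choice of the recursion; without it, a chosen $W_n$ could pin down a coordinate on which $h_n$ is far from $g$, and density alone would not repair the mismatch.
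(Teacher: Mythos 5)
Your proof is correct and takes essentially the same route as the paper's: an inductive construction that pins each $W_n$ on finitely many coordinates collected into an increasing tower of finite sets on which the chosen witnesses approximate $g$, followed by a density argument that splices together a single function in $E$ close to $h_n$ on the pinned coordinates and close to $g$ on the remaining ones. The differences are only bookkeeping: the paper nests $W[h_{n+1};A_{n+1},\varepsilon_{n+1}]$ inside $W_{n+1}\cap W[g;A_n,\tfrac{1}{n+1}]$ to transfer closeness to $g$, whereas you control $\rho\big(h_n(x),g(x)\big)<2^{-n}$ on $A_n$ directly and handle $A\cap B_n$ by the triangle inequality.
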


\begin{proof}
We proceed by induction on $n\in\w$. For $n=0$, choose arbitrarily $W_0\in \W_0$ and $h_0\in W_0$. Since $W_0$ is open, we can choose a finite subset $A_0$ of $X$ and $\e_0>0$ such that $W[h_0;A_0,\e_0]\subseteq W_0$. For $n=1$, choose $W_1\in\W_1$ such that $W_1 \cap W[g;A_0, 1]$ is not empty (this is possible since $g\in\overline{U_1}$ and $\W_1$ covers $U_1$). Take arbitrarily an $h_1\in W_1 \cap W[g;A_0, 1]$, a finite subset $A_1$ of $X$ containing $A_0$ and $\e_1$, $0<\e_1<\tfrac{\e_0}{2}$, such that
\[
W[h_1;A_1,\e_1]\subseteq W_1 \cap W[g;A_0, 1].
\]
Continuing this process we can construct an increasing sequence $\{ A_n:n\in\w \}$ of finite subsets of $X$, a decreasing null-sequence $\{\e_n:n\in\w\}$ of
positive reals, a sequence $\{ W_n \in\W_n: n\in\w\}$ of open subsets of $E$  and a sequence $\{ h_n:n\in\w\}$ in $E$ such that
\begin{equation} \label{equ:seq-Ascoli-A}
W\big[h_{n+1};A_{n+1},\e_{n+1}\big] \subseteq W_{n+1} \cap W\big[g;A_n, \tfrac{1}{n+1} \big] \quad \mbox{ for all } \; n\in\w.
\end{equation}
We claim that $\{ W_n:n\in\w\}$ is as required. Indeed, fix a finite $F\subseteq X$ and $\e >0$. Choose a natural number $n_0 >1$ such that
$F\cap(\bigcup_{n\in\w}A_n)\subseteq A_{n_0}$ and $\frac{1}{n_0}+\e_{n_0+1}<\e$. Since $E$ is dense in $Y^X$, there is an $h\in E$ such
that
\[
\rho\big(h(x),h_{n_0+1}(x)\big)<\e_{n_0+1} \mbox{ for every } x\in  A_{n_0+1},
\]
(so, by (\ref{equ:seq-Ascoli-A}), $\rho\big(h(x),g(x)\big)<\tfrac{1}{n_0+1}+\e_{n_0+1}<\e$ for every $x\in F\cap A_{n_0}=F\cap A_{n_0+1}$) and
\[
\rho\big(h(x),g(x)\big)<\e \mbox{ for every } x\in F\setminus A_{n_0+1}.
\]
Then $h\in W\big[ h_{n_0+1}; A_{n_0+1},\e_{n_0+1}\big] \subseteq W_{n_0+1}$ and $h\in W[g;F,\e]$. Thus $h\in  W_{n_0+1}\cap W[g;F,\e]$. \qed
\end{proof}

\begin{lemma} \label{l:Cp-Ascoli-2}
Let  $(Y,\rho)$ be a metric space containing at least two points, $X$ be a set, $E$ be a dense subspace of the topological product $Y^X$, and let $g\in E$. Assume that $E$ is a sequentially Ascoli space and $\{U_n:n\in\w\}$ is a sequence of open subsets of $E$ such  that $g\in\overline{\bigcup\{U_n:n\in\w\}}$ but $g\not\in\overline{U_n}$ for all $n\in\w$. Then there exists a compact subspace $K$ of $E$ such that the set $\{n\in\w :K\cap U_n\neq\emptyset\}$ is infinite.
\end{lemma}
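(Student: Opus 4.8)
The plan is to argue by contradiction. Suppose that no compact $K\subseteq E$ meets infinitely many of the $U_n$; equivalently, the family $\{U_n:n\in\w\}$ is compact-finite. I will contradict the sequential Ascoli property of $E$ by producing a null-sequence in $\CC(E)$ that fails to be equicontinuous at $g$, which is impossible by the equivalence (i)$\Leftrightarrow$(v) of Theorem \ref{t:seq-R-Ascoli}. The first, and completely clean, step is to record the following consequence of $g\in\overline{\bigcup_n U_n}$ together with $g\notin\overline{U_n}$: \emph{every open neighborhood $W$ of $g$ meets $U_n$ for infinitely many $n$.} Indeed, if only $U_{n_1},\dots,U_{n_k}$ met an open set $W\ni g$, then, since $g\in\overline{\bigcup_n U_n}$ and $g\in W$, we would have $g\in\overline{W\cap\bigcup_n U_n}=\bigcup_{j=1}^{k}\overline{U_{n_j}\cap W}\subseteq\bigcup_{j=1}^{k}\overline{U_{n_j}}$, whence $g\in\overline{U_{n_j}}$ for some $j$, contrary to hypothesis.

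Next I would attach to each $n$ a continuous bump $f_n\colon E\to[0,1]$ with $\supp f_n\subseteq U_n$ and $f_n$ close to $1$ on a prescribed open piece of $U_n$; such functions exist because $E$, being a subspace of the (Tychonoff) product $Y^X$, is Tychonoff. Since $\supp f_n\subseteq U_n$ and the family $\{U_n\}$ is compact-finite, every compact $K\subseteq E$ misses all but finitely many $U_n$, so $f_n\!\restriction_K=\mathbf{0}$ for all large $n$; thus $f_n\to\mathbf{0}$ in $\CC(E)$, while $f_n(g)=0$ for all $n$ because $g\notin U_n$. If this null-sequence can be arranged so as to be \emph{not} equicontinuous at $g$, then Theorem \ref{t:seq-R-Ascoli}(v) is violated and the lemma follows.

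The hard part is exactly this failure of equicontinuity at $g$, and it is where the density of $E$ in $Y^X$ must be used essentially. The obstacle is that when $X$ is uncountable the product $Y^X$ has no countable neighborhood base at $g$: a point of $U_n$ is forced away from $g$ on the finite coordinate set $F_n$ witnessing $g\notin\overline{U_n}$, and its values on the remaining, uncontrolled coordinates cannot be prescribed, so one cannot naively select peaks of the $f_n$ that approach $g$. I would circumvent this by the diagonal device from the proof of Lemma \ref{l:Cp-Ascoli-1}: using the displayed ``infinitely many'' fact, build an increasing sequence $A_0\subseteq A_1\subseteq\cdots$ of finite subsets of $X$, a null-sequence $\e_k\downarrow 0$, and indices $m_k\uparrow$ with $W[g;A_{k-1},\e_k]\cap U_{m_k}\neq\emptyset$, and locate the large values of $f_{m_k}$ on an open subset of $U_{m_k}$ meeting $W[g;A_{k-1},\e_k]$; then, given any basic neighborhood $W[g;F,\e]$ and any $N$, the density of $E$ in $Y^X$ furnishes a point $h\in E$ lying in the open large-value region of some $f_{m_k}$ with $m_k\ge N$ and simultaneously agreeing with $g$ within $\e$ on $F$ (matching $g$ freely on the finitely many coordinates of $F$ not yet controlled by $A_{k-1}$, exactly as fresh points are produced in Lemma \ref{l:Cp-Ascoli-1}). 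Such an $h$ keeps $|f_{m_k}(h)-f_{m_k}(g)|$ bounded away from $0$ inside an arbitrarily small neighborhood of $g$, so $\{f_{m_k}\}$ is not equicontinuous at $g$. The genuine difficulty to be overcome is the tension between keeping $\supp f_n\subseteq U_n$ (needed for $\CC(E)$-convergence) and still being able to approach the large-value set of some tail function arbitrarily close to $g$ along the diagonal; balancing these two requirements, by a careful density-driven recursion, is the crux of the argument.
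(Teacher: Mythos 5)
Your proposal is correct and follows essentially the same route as the paper: argue by contradiction that $\{U_n\}_{n\in\w}$ is compact-finite, build Urysohn-type bumps supported in the $U_n$ (yielding a null sequence in $\CC(E)$ with value $0$ at $g$), and use the diagonal recursion of Lemma \ref{l:Cp-Ascoli-1} to force the large-value regions of a subsequence to accumulate at $g$, contradicting the equivalence (i)$\Leftrightarrow$(v) of Theorem \ref{t:seq-R-Ascoli}. The only difference is packaging: rather than rerunning the recursion inline as you sketch, the paper applies Lemma \ref{l:Cp-Ascoli-1} verbatim to the tail unions $\widetilde{U}_n=\bigcup_{l\geq n}U_l$, covered by the large-value sets $W_{h,l}=\{f\in E:\phi_{h,l}(f)>1\}$ of bumps peaking at each $h\in U_l$, which automates exactly the ``density-driven recursion'' you identify as the crux.
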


\begin{proof}
Suppose for a contradiction that for every compact subset $K$ of $E$, $K\cap U_n\neq\emptyset$ only for finitely many $n\in\w$. Since $E$ is Tychonoff,
for every $n\in\w$ and each $h\in U_n$, we can choose a  function $\phi_{h,n}\in C\big( E\big)$ such that
\[
\phi_{h,n}(h) >1 \; \mbox{ and } \; \phi_{h,n}{\restriction}_{E\setminus U_n}=0,
\]
and set $W_{h,n}:= \{ f\in E: \phi_{h,n}(f)>1\}$, it is clear that $W_{h,n}$ is an open subset of $U_n$. For every $n\in\w$, set
$\widetilde{U}_n := \bigcup_{l\geq n}U_l$ and
\[
\W_n :=\left\{ W_{h,l}: l \geq n \; \mbox{ and } \; h\in U_l \right\}.
\]
Then $g\in \overline{\widetilde{U}_n}$ and $\W_n$ is an open cover of $\widetilde{U}_n$. Applying Lemma~\ref{l:Cp-Ascoli-1} for the sequence
$\{ \widetilde{U}_n: n\in\w\}$ we obtain the following: for every $n\in\w$ there exists $W_n\in\W_n$ such that $g\in\overline{\bigcup\{W_n:n\in\w\}}$.
Let $l_n\geq n$ and $\phi_n$ be witnesses  for $W_n\in\W_n$ (observe also that $g\not\in\overline{W_n}$ for all $n\in\w$).

We claim that $\phi_n$ converges to the zero function $\mathbf{0}\in \CC(E)$. Indeed, fix a compact $K\subseteq E$ and $\e>0$. By our assumption the set
$F:=\{ n\in\w: K\cap U_{l_n}\not=\emptyset\}$ is finite. Therefore, for every $n\in\w\setminus F$, we have $\phi_n{\restriction}_K =0$ and hence
$\phi_n\in W[\mathbf{0};K,\e]$. Thus $\phi_n\to \mathbf{0}$ in $\CC(E)$.

On the other hand, given any open $V\subseteq E$ containing $g$ and $m\in\w$, the facts $g\in\overline{\bigcup\{W_n:n\in\w\}}$ and $g\not\in\overline{W_n}$ imply
that there exist $n\geq m$ and $f\in V\cap W_n$ such that $\phi_n(f)>1$. This proves that the convergent sequence
\[
\{\phi_n:n\in\w\}\cup\{\mathbf{0}\}\subseteq \CC(E)
\]
is not equicontinuous at $g$. Thus $E$ is not sequentially Ascoli, a contradiction.\qed
\end{proof}

\begin{definition}[\cite{GO}] \label{def:rel-p-Tych} {\em
Let $X$ and $Y$ be topological spaces. A subspace $E$  of $C_p(X,Y)$ is called {\em relatively $Y_p$-Tychonoff} if for every closed subset $A$ of $X$, finite subset $F\subseteq X$ contained in $X\SM A$, point $y_0\in Y$ and each $f\in C_p(X,Y)$  there is a function ${\bar f}\in E$ such that
$
{\bar f}{\restriction}_F=f{\restriction}_F  \mbox{ and }  {\bar f}(A)\subseteq \{ y_0\}.
$\qed }
\end{definition}
In other words, $E$ is a relatively $Y_p$-Tychonoff   subspace of $C_p(X,Y)$ if for every closed subset $A$ of $X$, every function $f\in C_p(X,Y)$ and each finite $F\subseteq X$ such that  $F\subseteq X\SM A$, the restriction $f{\restriction}_F$ can be extended to a function ${\bar f}\in E$ with an additional condition ${\bar f}(A)\subseteq \{ y_0\}$ for some point $y_0\in Y$.

For a subset $A$ of a topological space $X$, let $[A]_0:=A$ and for each $\alpha\in\w_1$, let $[A]_\alpha$ be the set of limits of convergent sequences in $\bigcup_{i\in\alpha} [A]_i$. The set $[A]_s:=\bigcup_{\alpha\in\w_1} [A]_\alpha$ is called the {\em sequential closure } of $A$.

A family  $\{ A_i\}_{i\in I}$ of subsets of a set $X$ is called {\em point-finite} if for every $x\in X$, the set $\{ i\in I: x\in A_i\}$ is finite. A family  $\{ A_i\}_{i\in I}$ of subsets of a topological space $X$ is said to be {\em strongly point-finite} if for every $i\in I$ there is an open set $U_i$ of $X$  such that $A_i\subseteq U_i$ and  the family $\{ U_i: i\in I\}$ is point-finite.

Following Sakai \cite{Sak2}, a topological space $X$ is said to have the {\em property $(\kappa)$} if every pairwise disjoint sequence of finite subsets of $X$ has a strongly point-finite subsequence.

\begin{proposition} \label{p:Cp-seq-Ascoli}
Let $Y$ be a metrizable non-compact space, $X$ be a $Y$-Tychonoff space, and let $E$ be a relatively $Y_p$-Tychonoff   subspace of $C_p(X,Y)$ containing all constant functions. Consider the following statements:
\begin{enumerate}
\item[{\rm (i)}] $E$ is a sequentially Ascoli space;
\item[{\rm (ii)}] the sequential closure of any open set of $E$ is closed;
\item[{\rm (iii)}] $X$ has the property $(\kappa)$.
\end{enumerate}
Then  {\rm (i)$\Rightarrow$(iii)}  and {\rm (ii)$\Rightarrow$(iii).}
\end{proposition}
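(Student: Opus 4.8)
The plan is to prove both implications by contraposition, showing that if $X$ fails the property $(\kappa)$, then $E$ is neither sequentially Ascoli nor has the property that sequential closures of open sets are closed. Since $X$ fails $(\kappa)$, there is a pairwise disjoint sequence $\{F_n : n\in\w\}$ of finite subsets of $X$ such that \emph{no} subsequence is strongly point-finite. I would use this sequence to build, inside $E$, a point $g$ (I can take $g=\mathbf{0}$ or some fixed constant function, which lies in $E$ by hypothesis) together with a sequence of open sets $\{U_n : n\in\w\}$ witnessing the failure. The key construction is this: for each $n$, using the relative $Y_p$-Tychonoff property I can produce functions in $E$ that are forced to take a value bounded away from $g$ on $F_n$ while agreeing with $g$ off a neighborhood; the open set $U_n$ should be a basic neighborhood of such a function, pinned to $F_n$. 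Because $Y$ is metrizable and non-compact, I can separate values in $Y$ at a fixed distance, which is what lets me define these $U_n$ robustly.

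The heart of the argument is to show that $g\in\overline{\bigcup_n U_n}$ while $g\notin\overline{U_n}$ for each individual $n$, and then to connect the failure of strong point-finiteness to the combinatorics of compact sets in $E$. Concretely, for \emph{both} implications I would argue: suppose toward a contradiction that (iii) fails but (i) or (ii) holds. For (ii)$\Rightarrow$(iii): the set $\bigcup_n U_n$ has $g$ in its closure, so if its sequential closure were closed, $g$ would be a limit of a convergent sequence drawn from $\bigcup_n U_n$; I must show this convergent sequence would yield a strongly point-finite subsequence of the $\{F_n\}$, contradicting the choice. For (i)$\Rightarrow$(iii): I would invoke Lemma~\ref{l:Cp-Ascoli-2}. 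Indeed, $E$ is a dense subspace of $Y^X$ (or I reduce to the relevant density), $g\in\overline{\bigcup_n U_n}$, and $g\notin\overline{U_n}$ for each $n$; if $E$ were sequentially Ascoli, the lemma would hand me a compact $K\subseteq E$ meeting infinitely many $U_n$. The remaining—and decisive—step is to show that such a compact set forces a strongly point-finite subsequence of $\{F_n\}$: a compact subset $K$ of $E\subseteq C_p(X,Y)$ restricts to something controlled, and I would extract, from $K$ meeting $U_{n_k}$ for infinitely many $k$, open sets $\{V_{n_k}\}$ around the corresponding $F_{n_k}$ that are point-finite, using compactness to bound how many indices can accumulate at any single point of $X$.

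The main obstacle I anticipate is precisely this translation from ``a compact set in $E$ meets infinitely many $U_n$'' (from Lemma~\ref{l:Cp-Ascoli-2}) and from ``$g$ is a sequential limit of points in $\bigcup_n U_n$'' (for the (ii) case) back to strong point-finiteness of a subsequence of $\{F_n\}$ in $X$. This requires setting up the open sets $U_n$ so that membership in $U_n$ genuinely encodes geometric information about $F_n$ in $X$, and then reading off point-finiteness of a family of open neighborhoods $\{V_{n_k}\}$ of the $\{F_{n_k}\}$ from the compactness (or the convergence) in the function space. The relative $Y_p$-Tychonoff hypothesis is what makes the $U_n$ rich enough to carry this encoding; the metrizability of $Y$ gives uniform control via $\rho$, and non-compactness of $Y$ guarantees I have room to separate the relevant function values. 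I expect that once the $U_n$ are defined correctly, both implications run through nearly identical extraction arguments, with Lemma~\ref{l:Cp-Ascoli-2} doing the work in case (i) and the sequential-closure hypothesis doing the analogous work in case (ii).
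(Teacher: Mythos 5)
You have the right architecture — Lemma~\ref{l:Cp-Ascoli-2} for (i), a sequential-closure argument for (ii), and open sets in $E$ pinned to the finite sets $A_n$ — but the step you yourself flag as ``decisive'' is exactly where your proposed device breaks down, so this is a genuine gap. ``Separating values in $Y$ at a fixed distance'' suggests a single target value $y_1$ with $\rho(y_1,y_0)\geq\varepsilon$ and sets $U_n$ consisting of functions near $y_1$ on $A_n$; with such $U_n$ the extraction fails. If a compact $K\subseteq E$ meets infinitely many $U_{n_j}$ and $h_j\in K\cap U_{n_j}$, the values $h_j(x)$ may simply cluster near $y_1$ inside the compact set $\{h(x):h\in K\}$, yielding no contradiction and no point-finiteness of $\{h_j^{-1}(\cdot)\}$. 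What non-compactness of the metrizable $Y$ actually buys — and what the paper's proof uses — is a sequence $g_n\in Y$ and open $V_n\ni g_n$ with $\{\overline{V_n}\}_{n\in\w}$ \emph{discrete} in $Y$, and then $U_n:=\{f\in E: f(A_n)\subseteq V_n\}$ with the target region varying with $n$. This $n$-dependence is essential: in case (i), taking $h_j\in K\cap O_{n_j}$ and $U_j:=h_j^{-1}(V_{n_j})\supseteq A_{n_j}$, point-finiteness at a point $x$ holds because infinitely many values $h_j(x)\in V_{n_j}$ with distinct $n_j$ would form an infinite closed discrete subset of the compact set $\{h(x):h\in K\}$, which is impossible. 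Without discreteness of the family $\{\overline{V_n}\}$ no such contradiction is available.

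Two further points. First, in your (ii) argument the inference ``if the sequential closure were closed, $g$ would be a limit of a convergent sequence drawn from $\bigcup_n U_n$'' is imprecise: the sequential closure $[U]_s$ is a transfinite iteration, so $\mathbf{g}_0\in[U]_s$ could a priori arise only at a later stage. The paper closes this by proving that, under the supposition that no subsequence of $\{A_n\}$ is strongly point-finite, every convergent sequence from $U=\bigcup_{n>0}U_n$ eventually lies in a single $U_m$ (any $f_j\in U_{n_j}$ with distinct $n_j$ gives a non-point-finite family $\{f_j^{-1}(V_{n_j})\}$, hence a point $z$ with $f_j(z)\in V_{n_j}$ infinitely often, and such $f_j(z)$ cannot converge by discreteness); consequently $[U]_s\subseteq\bigcup_{n>0}\overline{U_n}$, which excludes $\mathbf{g}_0$, while $\mathbf{g}_0\in\overline{U}$ by the relatively $Y_p$-Tychonoff property (choose $k$ with $F\cap A_k=\emptyset$ and $f\in E$ with $f{\restriction}_F=\mathbf{g}_0{\restriction}_F$ and $f(A_k)=\{g_k\}$). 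Second, your contraposition for (i) is unnecessary overhead: the paper proves (i)$\Rightarrow$(iii) directly, applying Lemma~\ref{l:Cp-Ascoli-2} to an arbitrary disjoint sequence $\{A_n\}$ and constructing the strongly point-finite subsequence outright; the hypothesis ``no subsequence is strongly point-finite'' is needed only in the (ii) case, where it drives the non-convergence claim above.
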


\begin{proof}
Since $Y$ is not compact, one can find a sequence $\{ g_n\}_{n\in\w}\subseteq Y$ and a sequence $\{ V_n\}_{n\in\w}$ of open subsets in $Y$ such that
\begin{enumerate}
\item[{\rm (a)}] $g_n\in V_n$ for all $n\in\w$, and
\item[{\rm (b)}] the family $\V=\{ \overline{V_n}\}_{n\in\w}$ is discrete in $Y$.
\end{enumerate}

(i)$\Rightarrow$(iii) Assume that $E$ is a sequentially Ascoli space. Consider a sequence $\AAA=\{ A_n:n\in\w\}$ of finite subsets of $X$ such that $A_n\cap A_m=\emptyset$ for all $n\neq m$. We have to find  an infinite $J\subseteq \w$ and open sets $U_j\supseteq A_j$ for all $j\in J$, such that the family $\{U_j:j\in J\}$ is point-finite.

For every $k\in\w$, set $O_k :=E\cap \bigcap_{a\in A_k} W[\mathbf{g}_k;\{a\},V_k]$. It follows from (a) and (b) that $\mathbf{g}_0\not\in \overline{O_k}$ for all $k>0$. Since $X$ is $Y$-Tychonoff and $E$ is dense in $C_p(X,Y)$ (so $E$ is dense in $Y^X$), the disjointness of the sequence $\AAA$ implies $\mathbf{g}_0\in\overline{\bigcup\{O_k:k>0\}}^E$. By Lemma~\ref{l:Cp-Ascoli-2},  there exists a compact set $K\subseteq E$ intersecting infinitely many of the $O_k$'s, so the set $J:=\{ n>0: K\cap O_n\not= \emptyset\}$ is infinite. For every $j\in J$, choose a function $h_j\in K\cap O_j$ and set
\[
U_j :=\{x\in X: h_j(x)\in V_j \},
\]
and note that $U_j$ is an open subset of $X$ containing $A_j$. We show that $\{ U_j\}_{j\in J}$ is point-finite.
To this end, fix an arbitrary $x\in X$ and set $J'_x :=\{ j\in J: x\in U_j\}$. We have to prove that $J'_x$ is finite.

Suppose for a contradiction that $J'_x$ is infinite. Since $h_j(x)\in V_j $, (b) implies that the sequence $\{h_j(x):j\in J'_x\}$ is closed and discrete in $Y$. On the other hand, since $\{h_j:j\in J'_x\}$ is a subset of the compact set $K$, the sequence $\{h_j(x):j\in J'_x\} \subseteq Y$ must be relatively compact. This contradiction shows that $J'_x$ must be finite as desired.
\smallskip

(ii)$\Rightarrow$(iii) Assume that the sequential closure of any open set of $E$ is closed. Consider a disjoint sequence $\{ A_n:n\in\w\}$ of finite subsets of $X$. We have to find  an infinite $J\subseteq \w$ and open sets $U_j\supseteq A_j$ for all $j\in J$, such that $\{U_j:j\in J\}$ is point-finite.
Suppose for a contradiction that no subsequence of $\{ A_n:n\in\w\}$ is strongly point-finite.

For each $n\in\w$, set
\[
U_n :=\{ f\in E: f(A_n) \subseteq V_n \} =E\cap \bigcap_{a\in A_n} [\{a\};V_n],
\]
and let $U:=\bigcup_{n>0} U_n$. Since each $U_n$ is open in $E$, the set $U$ is also open in $E$. Observe that
\[
\overline{U_n}\subseteq \{ f\in E: f(A_n) \subseteq \overline{V_{n}}\}.
\]

{\em Claim 1. $[U]_s =\bigcup_{n>0} [U_n]_s \subseteq \bigcup_{n>0} \overline{U_n}$.} Indeed, let $f_j\in U_{n_j}$ for $0<n_0<n_1<\cdots$. Then $A_{n_j} \subseteq f^{-1}_j(V_{n_j})$, and hence, by our supposition that no subsequence of $\{ A_n:n\in\w\}$ is strongly point-finite, the family $\big\{ f^{-1}_j(V_{n_j})\big\}_{j\in\w}$ is not point-finite. Therefore there exists a point $z\in X$ which is contained in $f^{-1}_j(V_{n_j})$ for infinitely many $j$'s. This means that $f_j(z)\in V_{n_j} $ for infinitely many $j$'s. But then, taking into account that the family $\mathcal{V}' :=\{ \overline{V_{n_j}}\}$ is discrete and hence $\overline{\bigcup\mathcal{V}'}=\bigcup\overline{V_{n_j}}$, we obtain that $f_j(z)$ does not converge in $Y$. Thus, if a sequence $S=\{ f_j\}_{j\in\w}\subseteq U$ converges in $E$, there is an $m\in\w$ such that $S\cap U_{m}$ is infinite and hence $S$ converges to a point of $[U_m]_s$. The claim is proved.

By (a) and (b) we have $g_0\not\in \overline{V_{n}}$ for every $n>0$. It follows that $\mathbf{g}_0\not\in \bigcup_{n>0} \overline{U_n}$. Hence, by Claim 1, $\mathbf{g}_0\not\in [U]_s$. Therefore, to get a desired contradiction it suffices to prove that $\mathbf{g}_0\in \overline{U}$ (which means that the sequential closure of $U$ is not closed).

Fix a finite subset $F$ of $X$ and $\delta >0$. Since $A_n\cap A_m=\emptyset$ for all $n\neq m$, there is $k\in\w$ such that $F\cap A_k=\emptyset$. Since $X$ is $Y$-Tychonoff and $E$ is a relatively $Y_p$-Tychonoff   subspace of $C_p(X,Y)$, one can find a function $f\in E$ such that $f{\restriction}_F=\mathbf{g}_0{\restriction}_F$ and $f(A_k)=\{g_k\}\subseteq V_k $. Thus $f\in W[\mathbf{g}_0;F,\delta]\cap U_k$ and hence $\mathbf{g}_0\in \overline{U}$.\qed
\end{proof}

A class of spaces with the property $(\kappa)$ is given below.
\begin{proposition} \label{p:kappa-P-space}
Every Tychonoff $P$-space $X$ has the property $(\kappa)$.
\end{proposition}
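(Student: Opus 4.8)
The plan is to prove the formally stronger statement that every pairwise disjoint sequence $\{A_n : n\in\w\}$ of finite subsets of $X$ is already strongly point-finite (so in particular it has a strongly point-finite subsequence). The whole argument rests on one structural feature of $P$-spaces: every countable subset of $X$ is closed and discrete. Indeed, since $X$ is Tychonoff it is $T_1$, so each singleton is closed; a countable set is therefore an $F_\sigma$-set, and its complement is a $G_\delta$-set, which is open because $X$ is a $P$-space. Hence every countable subset of $X$ is closed, and applying this to $D\SM\{d\}$ shows that each point $d$ of a countable set $D$ is relatively isolated, so $D$ is also discrete.

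Next I would record that $X$ is zero-dimensional; this is already noted in Proposition \ref{p:P-space-2-Ascoli}(ii) (alternatively, in a $P$-space every cozero-set is clopen, and cozero-sets form a base of a Tychonoff space). Put $D:=\bigcup_{n\in\w} A_n$. Since $D$ is countable it is closed and discrete by the previous paragraph, so for each point $d\in D$ the set $D\SM\{d\}$ is closed and misses $d$; by zero-dimensionality choose a clopen set $C_d$ with $d\in C_d \subseteq X\SM(D\SM\{d\})$, so that $C_d\cap D=\{d\}$.

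Now I would disjointify. Enumerate $D=\{d_k : k\in\w\}$ and set $W_{d_k}:=C_{d_k}\SM\bigcup_{j<k}C_{d_j}$. Each $W_{d_k}$ is clopen, being obtained from clopen sets by a finite Boolean operation, and it contains $d_k$ because $d_k\notin C_{d_j}$ for $j<k$ (as $C_{d_j}\cap D=\{d_j\}$ and $d_k\ne d_j$). For $j<k$ one has $W_{d_k}\cap C_{d_j}=\emptyset$, hence the family $\{W_d : d\in D\}$ is pairwise disjoint. Finally define $U_n:=\bigcup_{d\in A_n} W_d$; this is an open (indeed clopen) set with $A_n\subseteq U_n$. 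Because the $A_n$ are pairwise disjoint and the $W_d$ are pairwise disjoint, the sets $U_n$ are pairwise disjoint, so each point of $X$ lies in at most one $U_n$ and the family $\{U_n:n\in\w\}$ is point-finite. Thus $\{A_n\}$ is strongly point-finite, which gives property $(\kappa)$.

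The only genuinely decisive step is the first one: the reduction hinges entirely on the fact that the countable set $\bigcup_{n} A_n$ is closed and discrete, which fails in general spaces and is exactly where the $P$-space hypothesis enters. Everything after that is the routine expansion of a closed discrete set into a pairwise disjoint clopen family in a zero-dimensional space, where the countability of $D$ is used to keep each disjointified piece clopen (only finitely many clopen sets are subtracted at each stage).
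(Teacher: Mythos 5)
Your proof is correct and follows essentially the same route as the paper: observe that the countable union $\bigcup_n A_n$ is closed and discrete in the $P$-space, use zero-dimensionality to expand its points into a pairwise disjoint clopen family, and take the unions over each $A_n$ as the point-finite open expansion. The only difference is cosmetic: the paper simply asserts that the clopen neighborhoods $V_a$ can be chosen pairwise disjoint, while you spell out the disjointification $W_{d_k}=C_{d_k}\SM\bigcup_{j<k}C_{d_j}$, correctly using countability to keep each piece clopen.
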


\begin{proof}
First we recall that every Tychonoff $P$-space is zero-dimensional. Observe that every countable subset of a Tychonoff $P$-space being an $F_\sigma$-set is closed and discrete. Now let $\{ A_n:n\in\w\}$ be a disjoint sequence of finite subsets of $X$.
Set $A:=\bigcup_{n\in\w} A_n$. For every $a\in A$ choose a clopen neighborhood $V_a$ of $a$ such that the sequence $\{ V_a:a\in A\}$ is disjoint. For every $n\in\w$, set $U_n:=\bigcup_{a\in A_n} V_a$. It is clear that the sequence $\{U_n:n\in\w\}$ is point-finite. Thus $X$ has the property $(\kappa)$.\qed
\end{proof}

Now we show that  in the case when $Y$ is a discrete metrizable group the property $(\kappa)$ is also sufficient in Proposition \ref{p:Cp-seq-Ascoli}.

\begin{proposition} \label{p:Cp-kappa-FU-discrete}
Let $Y$ be a discrete abelian group containing more than one point, $X$ be an infinite $Y$-Tychonoff space, and let $E$ be a relatively $Y_p$-Tychonoff subgroup of $C_p(X,Y)$  containing all constant functions. If $X$ has the property $(\kappa)$, then  $E$ is a $\kappa$-Fr\'{e}chet--Urysohn space.
\end{proposition}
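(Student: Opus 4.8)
The plan is to verify the defining property of $\kappa$-Fr\'echet--Urysohn spaces directly: for every open $U\subseteq E$ and every $g\in\overline U$ one must produce a sequence in $U$ converging to $g$. Since $E$ is a subgroup of the topological group $C_p(X,Y)$, every translation is a homeomorphism, so replacing $U$ by $U-g$ I may assume $g=\mathbf 0$. Thus the problem reduces to: given open $U\subseteq E$ with $\mathbf 0\in\overline U$, find a sequence in $U$ converging to $\mathbf 0$. Throughout I will use that, because $Y$ is discrete, pointwise convergence $h_n\to\mathbf 0$ is equivalent to the family of supports $\{\supp h_n\}_{n\in\w}$ being point-finite, and that a basic neighbourhood of any $h\in E$ has the form $\{f\in E:f|_{B}=h|_{B}\}$ for a finite $B\subseteq X$.

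First I would build, by induction on $n\in\w$, pairwise disjoint finite sets $A_n\subseteq X$ together with functions $f_n\in U$. Having chosen finite witnessing sets $B_0,\dots,B_{n-1}$, put $\Psi_{n-1}:=\bigcup_{i<n}B_i$; since $\mathbf 0\in\overline U$ and $[\Psi_{n-1};0]$ is a neighbourhood of $\mathbf 0$, choose $f_n\in U\cap[\Psi_{n-1};0]$ and then a finite $B_n\supseteq\Psi_{n-1}$ with $\{f\in E:f|_{B_n}=f_n|_{B_n}\}\subseteq U$. Set $A_n:=\{x\in B_n:f_n(x)\neq 0\}$. If some $A_n=\emptyset$, then $f_n|_{B_n}=\mathbf 0$, whence $\mathbf 0\in U$ and the constant sequence settles the claim; so I may assume every $A_n$ is nonempty. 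Because $f_n$ vanishes on $\Psi_{n-1}\supseteq B_i$ for $i<n$, the sets $A_n$ are pairwise disjoint.

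Next I would invoke property $(\kappa)$ for the disjoint sequence $\{A_n\}_{n\in\w}$ to obtain an infinite $J\subseteq\w$ and open sets $W_j\supseteq A_j$ $(j\in J)$ with $\{W_j\}_{j\in J}$ point-finite. The key step, and the place where the hypothesis that $E$ is relatively $Y_p$-Tychonoff is essential, is to shrink the supports: for each $j\in J$, applying Definition~\ref{def:rel-p-Tych} to the closed set $A:=X\setminus W_j$, the finite set $F:=B_j\cap W_j\subseteq X\setminus A$, the point $y_0:=0$ and the function $f_j$, I obtain $g_j\in E$ with $g_j|_{F}=f_j|_{F}$ and $g_j(X\setminus W_j)\subseteq\{0\}$. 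Since $A_j\subseteq W_j$, every point of $B_j$ at which $f_j$ is nonzero lies in $W_j$; hence $f_j$ vanishes on $B_j\setminus W_j$, and comparing on $B_j\cap W_j$ and on $B_j\setminus W_j$ gives $g_j|_{B_j}=f_j|_{B_j}$, so $g_j\in U$. By construction $\supp g_j\subseteq W_j$.

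Finally, point-finiteness of $\{W_j\}_{j\in J}$ forces $\{\supp g_j\}_{j\in J}$ to be point-finite, so $g_j\to\mathbf 0$ in $E$. Reindexing $J$ as $\w$ yields the required sequence in $U$ converging to $\mathbf 0$, which (after undoing the translation) shows that $E$ is $\kappa$-Fr\'echet--Urysohn. I expect the only delicate point to be the support-shrinking step: one must choose the finite witnessing set $B_j$ so that the nonzero part $A_j$ of $f_j$ on $B_j$ is captured by the open set $W_j$ coming from $(\kappa)$, and then use relative $Y_p$-Tychonoffness to replace $f_j$ by a function whose support lies inside $W_j$ without leaving $U$; everything else is bookkeeping with the discreteness of $Y$.
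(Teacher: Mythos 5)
Your proof is correct and takes essentially the same route as the paper's: translate to $\mathbf{0}$, inductively pick $f_n\in U$ vanishing on the union of all earlier witnessing sets so that the nonzero parts $A_n$ are pairwise disjoint, apply property $(\kappa)$ to get a point-finite open family $\{W_j\}_{j\in J}$, and use relative $Y_p$-Tychonoffness to replace $f_j$ by $g_j\in U$ supported in $W_j$, so that point-finiteness yields $g_j\to\mathbf{0}$. The only (harmless) variation is at the truncation step: you prescribe values only on $B_j\cap W_j$ and observe that agreement on $B_j\setminus W_j$ is automatic since $f_j$ vanishes there, whereas the paper instead arranges its open sets $O_j$ to have closure disjoint from the zero-set $A(f_{n_j})$ before extending --- your handling sidesteps that extra shrinking but is otherwise the same argument.
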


\begin{proof}
Let $U$ be an open subset of $E$ and let $g\in \overline{U}\SM U$. Since the group $E$ is homogeneous, without loss of generality we can assume that $g$ is the zero-function $\mathbf{0}$. So, let $\mathbf{0}\in \overline{U}\setminus U$. We have to find a sequence in $U$ converging to $\mathbf{0}$.
Since $Y$ is discrete, for every $f\in U$, there is  a finite subset $F(f)$ of $X$ such that $\bigcap_{x\in F(f)} W[f;\{x\},0] \subseteq U$.  Set
\[
A(f):=\{ x\in F(f): f(x)=0\} \; \mbox{ and }\; B(f):=\{ x\in F(f): f(x)\not= 0\},
\]
and note that $B(f)\not=\emptyset$ for every $f\in U$ because $\mathbf{0} \not\in U$.

Now we construct a sequence $\{ f_n\}_{n\in\w} \subseteq U$ inductively, as follows. Take an arbitrary $f_0\in U$.
For $n=1$, the inclusion $\mathbf{0}\in \overline{U}$ implies that there is $f_1\in U\cap \bigcap_{x\in F(f_0)} W[f;\{x\},0]$.  Continuing this process, for every $n>0$ we can find  a function $f_n \in U\cap \bigcap \big\{ W\big[\mathbf{0};\{x\},0\big]: x\in \bigcup_{i=0}^{n-1} F(f_i)\big\}$.
By the choice of $f_n$, for every $n>0$ we have
\[
B(f_n)\cap \big( B(f_0)\cup\cdots\cup B(f_{n-1})\big)\subseteq B(f_n)\cap \bigcup_{i=0}^{n-1} F(f_i) =\emptyset.
\]
So the sequence $\{ B(f_n)\}_{n>0}$ is pairwise disjoint and hence, by the property $(\kappa)$, we can take a strongly point-finite subsequence $\{ B(f_{n_j}) \}_{j\in\w}$. Let $\{ O_j\}_{j\in\w}$ be a point-finite open family of $X$ such that
\begin{equation} \label{equ:kFU-Ascoli-1}
B(f_{n_j}) \subseteq O_j \; \mbox{ and } \; \overline{O_j} \cap  A(f_{n_j}) =\emptyset \quad (j\in\w).
\end{equation}
Since $X$ is $Y$-Tychonoff and $E$ is a relatively $Y_p$-Tychonoff   subspace of $C_p(X,Y)$, for every $j\in\w$ there is a function $t_j\in E$ such that
\begin{equation} \label{equ:kFU-Ascoli-2}
t_j\big( X\setminus O_j\big) =\{ 0\} \; \mbox{ and } \; t_j(z)= f_{n_j}(z) \; \mbox{ for all } z\in B(f_{n_j}) .
\end{equation}
Observe that if $x\in A(f_{n_j})$, (\ref{equ:kFU-Ascoli-1}) implies that $t_j(x)=0=f_{n_j}(x)$. Therefore $t_j{\restriction}_{F(f_{n_j})} =f_{n_j}{\restriction}_{F(f_{n_j})}$ for every $j\in\w$. Thus $t_j\in \bigcap_{x\in F(f_{n_j})} W[f_{n_j};\{x\},0] \subseteq U$ for all $j\in\w$. Finally, since $\{ O_j\}_{j\in\w}$ is point-finite, (\ref{equ:kFU-Ascoli-2}) implies that $t_j \to \mathbf{0}$.\qed
\end{proof}

\begin{corollary} \label{c:Cp-kappa-FU-discrete}
Let $Y$ be a discrete space containing more than one point, and let  $X$ be a zero-dimensional $T_1$-space. If $X$ has the property $(\kappa)$, then  $C_p(X,Y)$ is a $\kappa$-Fr\'{e}chet--Urysohn space.
\end{corollary}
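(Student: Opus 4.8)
The plan is to deduce the corollary directly from Proposition \ref{p:Cp-kappa-FU-discrete} by supplying the two hypotheses that its statement, but not the corollary's, explicitly requires: an abelian group structure on $Y$ and the separation axioms on $X$. Since the topology of $C_p(X,Y)$ depends only on the topology of $Y$ and not on any algebraic structure, and since being $\kappa$-Fr\'{e}chet--Urysohn is a purely topological property, I would first fix \emph{some} abelian group operation on the set $Y$ (any nonempty set carries one: for $|Y|$ finite use the cyclic group $\ZZ/|Y|\ZZ$, and for $|Y|$ infinite use $\bigoplus_{|Y|}\ZZ/2\ZZ$, which has cardinality $|Y|$ and more than one element). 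As the discrete topology is compatible with every group operation, this turns $Y$ into a discrete abelian group with more than one point without altering $C_p(X,Y)$ as a topological space.

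Next I would dispose of the degenerate case. If $X$ is finite, then being $T_1$ it is discrete, so $C_p(X,Y)=Y^X$ carries a product of discrete topologies and is itself discrete; a discrete space is trivially $\kappa$-Fr\'{e}chet--Urysohn, since the closure of an open set equals the set and a constant sequence witnesses the property. Hence we may assume $X$ is infinite, which is the standing hypothesis of Proposition \ref{p:Cp-kappa-FU-discrete}.

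The substantive step is to check that $X$ is $Y_p$-Tychonoff and that $E:=C_p(X,Y)$ is a relatively $Y_p$-Tychonoff subgroup of $C_p(X,Y)$ containing all constant functions. Both reduce to the same clopen-separation argument afforded by zero-dimensionality: given a closed set $A\subseteq X$, a finite set $F=\{x_1,\dots,x_k\}\subseteq X\SM A$, a point $y_0\in Y$, and prescribed values $f(x_i)\in Y$, I would choose pairwise disjoint clopen neighborhoods $U_i\ni x_i$ with $U_i\cap A=\emptyset$ (possible since $X$ has a base of clopen sets, $F$ is finite, and $A$ is closed and misses $F$), and define $\bar f$ to equal $f(x_i)$ on $U_i$ and $y_0$ off $\bigcup_{i}U_i$. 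This $\bar f$ is constant on each block of the finite clopen partition $\{U_1,\dots,U_k,\,X\SM\bigcup_{i}U_i\}$, hence continuous into the discrete space $Y$, and it restricts to the prescribed values on $F$ while sending $A$ into $\{y_0\}$. Applying this with arbitrary prescribed values shows that $X$ is $Y_p$-Tychonoff, and applying it with the values $f{\restriction}_F$ of a given $f\in C_p(X,Y)$ shows, via Definition \ref{def:rel-p-Tych}, that $C_p(X,Y)$ is relatively $Y_p$-Tychonoff. Since $Y$ is a discrete abelian group, $C_p(X,Y)$ is a subgroup of $Y^X$ under pointwise operations and obviously contains the constants, so $E=C_p(X,Y)$ meets all hypotheses of Proposition \ref{p:Cp-kappa-FU-discrete}, which then yields that $C_p(X,Y)$ is $\kappa$-Fr\'{e}chet--Urysohn. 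I expect the only place needing care to be the verification of these two separation axioms, but since zero-dimensionality is exactly tailored to produce the required locally constant (hence continuous) maps into a discrete target, no genuine obstacle should arise.
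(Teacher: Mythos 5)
Your proposal is correct and follows essentially the same route as the paper: the paper's proof likewise transfers a discrete abelian group structure onto $Y$ (noting every discrete space is homeomorphic to a discrete abelian group of the same cardinality) and then invokes Proposition~\ref{p:Cp-kappa-FU-discrete} with $E=C_p(X,Y)$, citing Proposition~2.7 of \cite{BG-Baire} for the $Y$-Tychonoffness of $X$ where you instead give the direct clopen-partition argument. Your extra care is harmless and in fact slightly tightens the paper's write-up, since you verify the separation hypotheses in a self-contained way and explicitly dispose of the finite-$X$ case, which the proposition's standing hypothesis excludes but the corollary's statement does not.
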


\begin{proof}
By Proposition 2.7 of \cite{BG-Baire}, the space $X$ is $Y$-Tychonoff.
Since every discrete space is homeomorphic to a discrete abelian group of the same cardinality, the assertion follows from Proposition \ref{p:Cp-kappa-FU-discrete} applied to $E=C_p(X,Y)$.\qed
\end{proof}

The proof of Proposition \ref{p:Cp-kappa-FU-discrete} shows that only the following two moments are essential: (1) the sequence $\{ B(f_n)\}_{n>0}$ is disjoint, and (2) one can find a function $t_j$ such that $t_j{\restriction}_{F(f_{n_j})} =f_{n_j}{\restriction}_{F(f_{n_j})}$. To get useful variations of these two properties in the case of non-discrete group  $Y$ we need the following notion which is stronger than the property of being a $Y_\I$-Tychonoff space.

\begin{definition} \label{def:Y-I-approx} {\em
Let $X$ and $Y$ be  topological spaces and let $\I$ be an ideal of compact sets of $X$. A subspace $H$ of $Y^X$ is said to have an {\em $\I$-approximation property at a point $y_0$} if the point $y_0$ has an open base $\V$ of neighborhoods such that
for every $V\in\V$, closed subset $A$ of $X$, $F\in \I$ contained in $X\SM A$, and a function $f:F\to V$ with finite image there is a function ${\bar f}\in H$ such that
\[
{\bar f}(X)\subseteq V, \quad {\bar f}{\restriction}_F=f \quad \mbox{ and } \quad {\bar f}(A)\subseteq \{ y_0\}.
\]
The space $X$ has the {\em $\I$-approximation property} if it has the $\I$-approximation property at each point $y\in Y$.\qed }
\end{definition}
As usual if $\I=\FF(X), \mathcal{S}(X)$ or $\KK(X)$, we shall say that the space $X$ has the $p$-, $s$- or {\em $k$-approximation property}, respectively. If $Y=\IR$ (or any locally arc-connected space), it follows from Proposition 2.9 of \cite{GO} that a topological space $X$ has the $\I$-approximation property if and only if $X$ is Tychonoff.

\begin{example} \label{exa:zero-dim-Yp-approx} {\em
Let $X$ be a zero-dimensional $T_1$-space, and let $\I$ be an ideal of compact sets of $X$. Then $X$ has the $\I$-approximation property for every nonempty Tychonoff space $Z$ since, by Proposition 
2.10 of \cite{GO},   $X$ is $Y_\I$-Tychonoff for every $T_1$-space $Y$.}\qed
\end{example}

Let $X$ be a Tychonoff space and let $E$ be a locally convex space. We denote by $C^b(X,E)$ and $C^{rc}(X,E)$ the spaces of all functions $f\in C(X,E)$ such that $f(X)$ is a bounded or a relatively compact subset of $E$, respectively.

\begin{proposition} \label{p:Cb-Yp-approx}
Let $E$ be a non-trivial locally convex space, $X$ be a Tychonoff space, and let $\I$ be an  ideal of compact sets of $X$. Then:
\begin{enumerate}
\item[{\rm (i)}] $X$ is $E_\I$-Tychonoff;
\item[{\rm (ii)}]  $C^b_\I(X,E)$ and $C^{rc}_\I(X,E)$ are  relatively $E_\I$-Tychonoff subspaces of $C_\I(X,E)$;
\item[{\rm (iii)}]  $C_\I(X,E)$, $C^b_\I(X,E)$ and $C^{rc}_\I(X,E)$ have the $\I$-approximation property.
\end{enumerate}
\end{proposition}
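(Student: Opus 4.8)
The plan is to derive all three items from a single construction together with two standard facts: \textbf{(a)} in the Hausdorff space $X$ any finitely many pairwise disjoint compact sets can be enclosed in pairwise disjoint open sets, and a compact set disjoint from a closed set $A$ can be separated from $A$ by a continuous $g\colon X\to[0,1]$ with $g$ equal to $1$ on the compact set and $0$ on $A$; \textbf{(b)} in the locally convex space $E$, for a point $y_0$ and a compact set $C$ the \emph{join} $D:=\{y_0+t(c-y_0):t\in[0,1],\ c\in C\}$ is compact, being the continuous image of $[0,1]\times C$ under the jointly continuous map $(t,c)\mapsto y_0+t(c-y_0)$, while every convex neighbourhood of $y_0$ containing a point $c$ contains the whole segment $[y_0,c]$. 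I would prove (i) first, read off (iii) by performing the same construction inside a convex neighbourhood, and finally treat (ii).

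For (i), let $A$ be closed, $y_0\in E$, $K\in\I$ with $K\subseteq X\SM A$, and $f\colon K\to E$ a continuous map with finite image $\{e_1,\dots,e_m\}$. As $E$ is $T_1$, the preimages $K_i:=f^{-1}(e_i)$ are closed in $K$, hence compact and pairwise disjoint; by (a) I choose pairwise disjoint open sets $O_1,\dots,O_m\subseteq X\SM A$ with $K_i\subseteq O_i$ and functions $\phi_i\in C(X,[0,1])$ with $\phi_i(K_i)=\{1\}$ and $\phi_i(X\SM O_i)=\{0\}$. Put $\bar f:=y_0+\sum_{i=1}^m\phi_i\,(e_i-y_0)$. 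Then $\bar f$ is continuous; on $K_i$ only the $i$-th summand survives, so $\bar f=e_i=f$; and on $A$ every $\phi_i$ vanishes, so $\bar f=y_0$. Hence $X$ is $E_\I$-Tychonoff.

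For (iii), fix $y_0\in E$ and let $\V$ be the base of convex neighbourhoods of $y_0$. Given $V\in\V$, a closed $A$, $F\in\I$ in $X\SM A$, and $f\colon F\to V$ with finite image $\{e_1,\dots,e_m\}\subseteq V$, I repeat the construction of (i) with the open sets $O_i$ taken \emph{pairwise disjoint}. Then at every point at most one summand is nonzero, so $\bar f(x)$ lies on a segment $[y_0,e_i]$; consequently $\bar f(X)\subseteq\bigcup_{i=1}^m[y_0,e_i]$, a finite union of segments. This set is compact and, by convexity of $V$, contained in $V$. Thus $\bar f(X)\subseteq V$ is relatively compact and bounded, so the same $\bar f$ witnesses the approximation simultaneously in $C_\I(X,E)$, $C^b_\I(X,E)$ and $C^{rc}_\I(X,E)$; as $y_0$ ranges over $E$, all three spaces enjoy the $\I$-approximation property.

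Finally (ii). When the datum to be matched on $K\in\I$ has finite image, the construction of (i) with disjoint supports already outputs $\bar f$ with $\bar f(X)$ a finite union of segments — a compact set — so $\bar f\in C^{rc}_\I(X,E)\subseteq C^b_\I(X,E)$, giving both relative $E_\I$-Tychonoffness statements at once. The delicate case, and the step I expect to be the main obstacle, is matching a \emph{full} continuous $f\in C(X,E)$ on an infinite compact $K$: here the honest cutoff $\bar f:=y_0+g\,(f-y_0)$ (with $g(K)=\{1\}$, $g(A)=\{0\}$) does satisfy $\bar f{\restriction}_K=f{\restriction}_K$ and $\bar f(A)=\{y_0\}$, but one must still force $\bar f(X)$ to be relatively compact (resp.\ bounded). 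The plan is to localize the support of $g$ to a neighbourhood $O\supseteq K$ with $f(O)\subseteq C+W$ for a prescribed convex balanced neighbourhood $W$ of $0$ (possible since $C:=f(K)$ is compact and $f$ is continuous) and then to push the values into the \emph{compact} join $D$ over $C$; it is the compactness of $D$, rather than any separation bookkeeping, that makes $\bar f$ land in $C^{rc}_\I(X,E)$, and pinning down this value-controlled extension is the only genuinely non-routine point.
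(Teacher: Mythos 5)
Your proof is necessarily a different route from the paper's, because the paper does not argue at all: its entire proof of Proposition \ref{p:Cb-Yp-approx} is a citation ((i) and (iii) to Proposition 2.9(ii) of \cite{GO}, (ii) to Proposition 4.4 of \cite{GO}), with only the parenthetical hint that a locally convex space has a base of absolutely convex neighbourhoods of zero. Your reconstruction of (i) and (iii) is correct and complete, and it uses exactly the mechanism that hint points to: the fibres $K_i=f^{-1}(e_i)$ are closed in the compact $K\in\I$, hence compact and pairwise disjoint, so they can be enclosed in pairwise disjoint open $O_i\subseteq X\SM A$ with Urysohn functions $\phi_i$ supported in $O_i$; the disjointness of the supports forces $\bar f=y_0+\sum_i\phi_i(e_i-y_0)$ to take values in the finite union of segments $\bigcup_i[y_0,e_i]$, which is compact and, by convexity, lies in the prescribed convex neighbourhood $V$. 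Segments inside convex neighbourhoods are the locally convex incarnation of the path-connectedness hypothesis in \cite{GO}, so your single witness $\bar f$ simultaneously settles (i), settles (iii) for all three spaces $C_\I(X,E)$, $C^b_\I(X,E)$, $C^{rc}_\I(X,E)$, and settles the finite-image case of (ii). What your approach buys is a self-contained elementary proof in place of an appeal to a preprint; what it loses is only the generality of \cite{GO}'s framework.

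The one genuine issue is the case you flag yourself at the end of (ii), and you are right both that it is the delicate point and that your sketch does not yet close it. First, it is quite possibly moot: the paper's own definitional pattern (Definition \ref{def:relatively-Y-Tych}, third bullet, where $Y_\I$-Tychonoff data $f:K\to Y$ are required to have \emph{finite image}, and Definition \ref{def:rel-p-Tych}, where $F$ is finite so every datum has finite image) strongly suggests that ``relatively $E_\I$-Tychonoff'' also only requires matching finite-image data on $K\in\I$, in which case your first sentence of (ii) already finishes the proof. But under the stronger reading --- matching $f{\restriction}_K$ for arbitrary $f\in C(X,E)$ and infinite $K\in\I$ --- your plan proves the $C^b_\I$ statement but not the $C^{rc}_\I$ one: with $\supp g\subseteq O$ and $f(O)\subseteq C+W$, the cutoff $\bar f=y_0+g\cdot(f-y_0)$ takes values in $D+W$ (with $D$ the compact join, using that $W$ is balanced), and $D+W$ is indeed bounded, but it contains a translate of the neighbourhood $W$ and hence is \emph{never} relatively compact when $E$ is infinite-dimensional. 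Pushing the values ``into the compact join $D$'' would require a continuous retraction of $C+W$ onto a compact set, which you do not construct and which is not available in a general locally convex space (even the closed convex hull of a compact set can fail to be compact outside quasi-complete spaces). So under the strong reading the $C^{rc}_\I$ half of (ii) remains open in your proposal, exactly as you suspected. Note finally that the paper only ever applies (ii) with $\I=\FF(X)$ (in the proof of Theorem \ref{t:Cb-Ascoli}), where the two readings coincide and your segment construction covers everything the paper actually needs.
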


\begin{proof}
(i) and (iii) follow from (ii) of Proposition 2.9 in \cite{GO} (recall that any locally convex space has a base at zero consisting of absolutely convex open neighborhoods), and (ii) is proved in Proposition 4.4 of \cite{GO}.\qed
\end{proof}

\begin{proposition} \label{p:Cp-kappa-FU}
Let $Y$ be a non-discrete metric abelian group, $X$ be an infinite $Y$-Tychonoff space, and let $E$ be a subgroup of $C_p(X,Y)$ satisfying the following properties:
\begin{enumerate}
\item[{\rm (i)}] $E$ is a relatively $Y_p$-Tychonoff   subspace of $C_p(X,Y)$ containing constant functions;
\item[{\rm (ii)}] $E$ has the $p$-approximation property.
\end{enumerate}
If $X$ has the property $(\kappa)$, then  $E$ is a $\kappa$-Fr\'{e}chet--Urysohn space.
\end{proposition}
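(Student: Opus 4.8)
The plan is to follow the scheme of Proposition \ref{p:Cp-kappa-FU-discrete}, replacing the exact vanishing available for discrete $Y$ by the range control granted by the $p$-approximation property. Since $E$ is a topological group it is homogeneous, so it suffices to treat $g=\mathbf 0$; translating $U$ if necessary I may assume $\mathbf 0\in\overline U\SM U$ and must produce a sequence in $U$ converging to $\mathbf 0$. Fix a decreasing base $\{V_n\}_{n\in\w}$ of symmetric open neighbourhoods of $0$ in $Y$ realising the $p$-approximation property of $E$, with $\bigcap_{n}V_n=\{0\}$. For every $f\in U$ choose a finite $F(f)\subseteq X$ and some $V_f\in\{V_n\}$ with $W[f;F(f),V_f]\cap E\subseteq U$, and set $B(f):=\{x\in F(f):f(x)\notin V_f\}$, $A(f):=F(f)\SM B(f)$. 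As $\mathbf 0\in E\SM U$, the function $\mathbf 0$ cannot lie in $W[f;F(f),V_f]$, whence $B(f)\neq\emptyset$. The basic observation, used repeatedly, is that every $h\in E$ with $h{\restriction}_{F(f)}=f{\restriction}_{F(f)}$ belongs to $U$.

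First I would construct inductively functions $f_n\in U$ and finite sets $P_n:=\bigcup_{i\le n}F(f_i)$ (with $P_{-1}:=\emptyset$) such that, shrinking the witness neighbourhood, $V_{f_n}\subseteq V_n$, and moreover $f_n(x)\in V_n$ for every $x\in P_{n-1}$; this is possible because $\mathbf 0\in\overline U$, so $U$ meets the neighbourhood $\{h\in E:h(x)\in V_n,\ x\in P_{n-1}\}$ of $\mathbf 0$. Writing $C_n:=B(f_n)\SM P_{n-1}$ for the ``new'' large points, the inclusions $F(f_i)\subseteq P_{n-1}$ $(i<n)$ give $C_n\cap C_m=\emptyset$ for $n\neq m$, so $\{C_n\}_{n\in\w}$ is a pairwise disjoint sequence of finite sets. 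Applying the property $(\kappa)$ to it I obtain an infinite $J\subseteq\w$ and a point-finite family $\{O_j\}_{j\in J}$ of open sets with $C_j\subseteq O_j$; shrinking each $O_j$ (which preserves point-finiteness) and using that $X$ is $Y$-Tychonoff, I may also assume $\overline{O_j}\cap\big(F(f_j)\SM C_j\big)=\emptyset$, so $\overline{O_j}$ meets $F(f_j)$ only in the new large points.

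Next I would manufacture the convergent sequence $\{t_j\}_{j\in J}\subseteq U$ by splitting each $t_j$ into a ``large'' and a ``background'' part. Using the relative $Y_p$-Tychonoff property I obtain $u_j\in E$ with $u_j{\restriction}_{C_j}=f_j{\restriction}_{C_j}$ and $u_j(X\SM O_j)=\{0\}$; using the $p$-approximation property with the neighbourhood $V_j$ I obtain $w_j\in E$ whose range lies in $V_j$, with $w_j{\restriction}_{F(f_j)\SM C_j}=f_j{\restriction}_{F(f_j)\SM C_j}$ (legitimate since on $A(f_j)$ the values lie in $V_{f_j}\subseteq V_j$, and on $B(f_j)\cap P_{j-1}$ they lie in $V_j$ by the inductive choice) and $w_j(\overline{O_j})=\{0\}$. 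Then $t_j:=u_j+w_j\in E$ satisfies $t_j{\restriction}_{F(f_j)}=f_j{\restriction}_{F(f_j)}$, hence $t_j\in U$, while off $O_j$ one has $u_j=0$ and $w_j\in V_j$, so $t_j(X\SM O_j)\subseteq V_j$. Since $\{O_j\}_{j\in J}$ is point-finite, every $x\in X$ lies outside $O_j$ for all but finitely many $j$, and for those $j$ one gets $t_j(x)\in V_j$; as $\bigcap_n V_n=\{0\}$ this forces $t_j(x)\to 0$. Therefore $t_j\to\mathbf 0$, which is the required sequence in $U$.

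The heart of the matter—and the reason the mere relative $Y_p$-Tychonoff property of the discrete case is not enough—is the construction of $w_j$: I need a function reproducing $f_j$ on the finite witness set $F(f_j)$ (to keep $t_j$ inside $U$) that is nevertheless $V_j$-small everywhere off the point-finite family $\{O_j\}$, so that the two convergence mechanisms, shrinking values at recurrent witness points and point-finiteness at the genuinely new large points, fuse into the single uniform estimate $t_j\in V_j$ off $O_j$. This global smallness of the background is exactly what Definition \ref{def:Y-I-approx} supplies and what is unavailable in a general non-discrete group; verifying that $u_j$ and $w_j$ can be chosen with disjoint active supports and glued by the group operation, and that the inductive choice $V_{f_n}\subseteq V_n$ makes the old witness values fall into $V_j$, is the main technical point to check.
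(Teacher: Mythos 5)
Your proof is correct, and its skeleton coincides with the paper's: inductively choose $f_n\in U$ that are $V_n$-small on the union $P_{n-1}$ of all previous witness sets, isolate the ``new large points'' $C_n$ to get a pairwise disjoint sequence of finite sets, apply property $(\kappa)$, and repair $f_{n_j}$ by a sum of a point-finitely supported function (via the relative $Y_p$-Tychonoff property) and a globally $V_j$-small function (via the $p$-approximation property) --- exactly the paper's decomposition $t_j+h_j$. The genuine difference is your definition of $B(f)$ as the points of $F(f)$ where $f$ leaves the witness neighbourhood $V_f$, instead of the paper's $B(f)=\{x\in F(f): f(x)\neq 0\}$. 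This buys a real simplification: for you $B(f)\neq\emptyset$ is automatic from $\mathbf{0}\notin U$, and --- more importantly --- your argument survives even if some (or all but finitely many) $C_n=B(f_n)\setminus P_{n-1}$ are empty, since then $t_j=w_j$ is globally $V_j$-small and still lies in $U$. The paper, by contrast, must keep $C_n$ nonempty, and the case $B(h)\SM B(f_0)=\emptyset$ is handled there by perturbing $h$ to a function taking a nonzero value $y\in V_1\cap\bigcap_x U_x$ on the new points; this is the only place where non-discreteness of $Y$ enters. Your route eliminates that case analysis and in fact never uses non-discreteness, so it subsumes the discrete case of Proposition \ref{p:Cp-kappa-FU-discrete} in one argument --- a modest strengthening.

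Three small repairs, none affecting the substance. First, Definition \ref{def:Y-I-approx} supplies \emph{some} base $\V$ at $0$, and you may not assume its members are symmetric; but your only use of symmetry is the implication ``$B(f)=\emptyset\Rightarrow\mathbf{0}\in W[f;F(f),V_f]\subseteq U$'', which is restored by setting $B(f):=\{x\in F(f): f(x)\notin V_f\cap(-V_f)\}$, since values on $A(f)$ then still lie in $V_f\subseteq V_j$ and the construction of $w_j$ is unchanged. Second, when invoking property $(\kappa)$ you should note explicitly that the $C_n$ may be empty: either pass to the (infinitely many) nonempty ones, or observe that empty $C_j$ admit $O_j=\emptyset$, and in the remaining degenerate case the functions $w_j$ alone already converge uniformly to $\mathbf{0}$. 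Third, the final convergence $t_j(x)\to 0$ follows because the decreasing family $\{V_n\}_{n\in\w}$ is a neighbourhood \emph{base} at $0$, not merely because $\bigcap_{n\in\w} V_n=\{0\}$; as stated, that justification is slightly off, though the fact you need is immediate from your choice of the $V_n$.
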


\begin{proof}
Let $U$ be an open subset of $E$ and let $g\in \overline{U}\SM U$. Since the group $E$ is homogeneous, without loss of generality we can assume that $g$ is the zero-function $\mathbf{0}$. So, let $\mathbf{0}\in \overline{U}\setminus U$. We have to find a sequence in $U$ converging to $\mathbf{0}$.
Fix a decreasing basis $\{V_n\}_{n\in\w}$ of open neighborhoods of $0\in Y$ witnessing  the $p$-approximation property.

Now we construct a sequence $\{ f_n\}_{n\in\w} \subseteq U$ inductively as follows.  Take an arbitrary $f_0\in U$. Choose a finite subset $F(f_0)$ of $X$ and  open sets $\{U_x: x\in F(f_0)\}$ in $Y$  such that
\[
E\cap \bigcap_{x\in F(f_0)}W[f_0;\{x\},U_x] \subseteq U
\]
(in what follows we shall omit ``$E\cap$'' to simplify the notations). Define
\[
A(f_0):=\{ x\in F(f_0): f_0(x)=0\} \; \mbox{ and }\; B(f_0):=\{ x\in F(f_0): f_0(x)\not= 0\},
\]
and note that $B(f_0)\not=\emptyset$ because $\mathbf{0} \not\in U$.

For $n=1$, the inclusion $\mathbf{0}\in \overline{U}$ implies that there is an $h\in U\cap \bigcap_{x\in F(f_0)}W[\mathbf{0};\{x\},V_{1}]$.
Choose a finite subset $F$ of $X$ and open sets $\{U_x: x\in F\}$ in $Y$ such that $F(f_0)\subsetneqq F$ (this is possible since $X$ is infinite) and
\[
\bigcap_{x\in F} W[h;\{x\},U_x]\subseteq U\cap \bigcap_{x\in F(f_0)}W[\mathbf{0};\{x\},V_{1}].
\]
In particular,
\begin{equation} \label{equ:seq-Ascoli-00}
h(x)\in V_1 \;\; \mbox{ for every }\; x\in B(f_0).
\end{equation}

Set
\[
A(h):=\{ x\in F: h(x)=0\} \; \mbox{ and }\; B(h):=\{ x\in F: h(x)\not= 0\},
\]
and note that $B(h)\not=\emptyset$ because $\mathbf{0} \not\in U$.
If the set $B(h) \SM B(f_0)$ is not empty, we set $f_1:=h$ and $C_1:=B(f_1) \SM B(f_0)$. Assume now that $B(h) \SM B(f_0)=\emptyset$. Then $h(x)=0$ for every $x\in F\SM B(f_0)$, and hence the set $V_{1} \cap \bigcap_{x\in F\SM B(f_0)} U_x$ is an open neighborhood of $0\in Y$. Since $Y$ is non-discrete, take an arbitrary nonzero point $y\in V_{1} \cap \bigcap_{x\in F\SM B(f_0)} U_x$. As $X$ is $Y$-Tychonoff and $E$ is a relatively $Y_p$-Tychonoff   subspace of $C_p(X,Y)$, there is an $f_1 \in E$ such that (recall that $F(f_0)\subsetneqq F$)
\[
f_1(x)=y \; \mbox{ if } \; x\in F\SM B(f_0), \; \mbox{ and } f_1(x)=h(x) \; \mbox{ for } \; x\in F\cap B(f_0)=B(f_0).
\]
By construction we have $f_1\in \bigcap_{x\in F} W[h;\{x\},U_x]\subseteq U$. So there is a finite subset $F(f_1)$ of $X$ and open sets $\{U_{x,1}: x\in F(f_1)\}$ in $Y$ such that $F\subseteq F(f_1)$ and
\[
\bigcap_{x\in F(f_1)} W[f_1;\{x\},U_{x,1}]\subseteq U\cap \bigcap_{x\in F(f_0)}W[\mathbf{0};\{x\},V_{1}].
\]
Set
\[
A(f_1):=\{ x\in F(f_1): f_1(x)=0\}, \quad B(f_1):=\{ x\in F(f_1): f_1(x)\not= 0\}, 
\]
and $C_1:=B(f_1) \SM B(f_0)$. Note that $C_1\not=\emptyset$ because, by construction, $\emptyset \not= F\SM B(f_0)\subseteq C_1$. If $z\in B(f_1)\cap B(f_0)$, then the choice of $f_1$ implies that $f_1(z)=h(z)$ and hence, by (\ref{equ:seq-Ascoli-00}), $f_1(z)\in V_1$.

Continuing this process, for every $n>0$ we can find  a function
\[
f_n \in U\cap \bigcap \left\{ W\big[\mathbf{0};\{x\},V_n\big]: \; x\in \bigcup_{i=0}^{n-1} F(f_i)\right\},
\]
 a finite set $F(f_n)\subseteq X$, and open sets $\{U_{x,n}: x\in F(f_n)\}$ such that
\begin{enumerate}
\item[(a)] $\bigcup_{i=0}^{n-1} F(f_i) \subsetneqq F(f_n)$ and $\bigcap_{x\in F(f_n)} W[f_n;\{x\},U_{x,n}] \subseteq U$;
\item[(b)] the set $C_n:=B(f_n)\SM \big( B(f_0)\cup\cdots\cup B(f_{n-1})\big)$ is not empty;
\item[(c)] if $ z\in B(f_n)\cap \big( B(f_0)\cup\cdots\cup B(f_{n-1})\big)$, then  $f_n(z)\in V_{n}$.
\end{enumerate}
For every $n>0$, set $D_n := B(f_n)\setminus C_n= B(f_n)\cap \big( B(f_0)\cup\cdots\cup B(f_{n-1})\big).$

For the pairwise disjoint sequence $\{ C_n\}_{n>0}$, we can take a strongly point-finite subsequence $\{ C_{n_j}\}_{j\in\w}$. Let $\{ O_j\}_{j\in\w}$ be a point-finite open family of $X$ such that
\[
C_{n_j} \subseteq O_j \; \mbox{ and } \; \overline{O_j} \cap \big( D_{n_j}\cup A(f_{n_j})\big) =\emptyset \quad (j\in\w).
\]
For every $j\in\w$, choose an open subset $W_j$ of $X$ such that
\[
D_{n_j} \subseteq W_j \; \mbox{ and } \; \overline{W_j} \cap \big( O_{j}\cup A(f_{n_j})\big) =\emptyset.
\]

Since $X$ is $Y$-Tychonoff and $E$ is a relatively $Y_p$-Tychonoff   subspace of $C_p(X,Y)$, for every $j\in\w$ there is a function $t_j\in E$ such that
\begin{equation} \label{equ:seq-Ascoli-4}
t_j\big( X\setminus O_j\big) =\{ 0\} \; \mbox{ and } \; t_j(z)= f_{n_j}(z) \; \mbox{ for all } z\in C_{n_j}.
\end{equation}
Since $\{ O_n\}_{n\in\w}$ is point-finite, it follows that $t_j \to \mathbf{0}$ in $C_p(X,Y)$.

Now the choice of the sequence $\{ V_n\}_{n\in\w}$, (ii) and (c) imply that for every $j\in\w$, there is an $h_j\in E$ such that
\begin{equation} \label{equ:seq-Ascoli-5}
h_j(X)\subseteq V_{n_j}, \; h_j\big( X\setminus W_j\big) =\{ 0\} \; \mbox{ and } \; h_j(z)= f_{n_j}(z) \; \mbox{ for all } z\in D_{n_j}.
\end{equation}
In particular, the sequence $\{ h_j\}_{j\in\w}$ uniformly converges to $\mathbf{0}$. Since $E$ is a group, we obtain that $\{ t_j+h_j\}_{j\in\w}\subseteq E$ and $t_j+h_j\to \mathbf{0}$ in $E$. The choice of $O$ and $W_j$ and  (\ref{equ:seq-Ascoli-4}) and (\ref{equ:seq-Ascoli-5}) imply $(t_j+h_j){\restriction}_{F(f_{n_j})} = f_{n_j}{\restriction}_{F(f_{n_j})}$, and hence
\[
t_j+h_j \in \bigcap_{ x\in F(f_{n_j})} W[f_{n_j};\{x\},U_{x,n_j}] \stackrel{(a)}{\subseteq} U \quad \mbox{ for all }\; j\in\w.
\]
 Thus $E$ is $\kappa$-Fr\'{e}chet--Urysohn.\qed
\end{proof}

Now we prove the main result of this section.

\begin{theorem} \label{t:Cp-seq-Ascoli}
Let $Y$ be a metrizable non-compact abelian group, $X$ be a $Y$-Tychonoff space, and let $E$ be a subgroup of $C_p(X,Y)$ satisfying the following properties:
\begin{enumerate}
\item[{\rm (a)}] $E$ is a relatively $Y_p$-Tychonoff   subspace of $C_p(X,Y)$ containing all constant functions;
\item[{\rm (b)}] if $Y$ is non-discrete, then $E$ has the $p$-approximation property.
\end{enumerate}.
Then the following assertions are equivalent:
\begin{enumerate}
\item[{\rm (i)}] $E$ is a $\kappa$-Fr\'{e}chet--Urysohn space;
\item[{\rm (ii)}] $E$ is an Ascoli space;
\item[{\rm (iii)}] $E$ is a sequentially Ascoli space;
\item[{\rm (iv)}] the sequential closure of any open set of $E$ is closed;
\item[{\rm (v)}] $X$ has the property $(\kappa)$.
\end{enumerate}
\end{theorem}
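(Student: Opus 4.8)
The plan is to establish the cyclic chain
\[
\text{(i)} \Rightarrow \text{(ii)} \Rightarrow \text{(iii)} \Rightarrow \text{(v)} \Rightarrow \text{(i)}
\]
together with the side implications $\text{(i)}\Rightarrow\text{(iv)}\Rightarrow\text{(v)}$, which jointly give the equivalence of all five statements. Almost all of the genuine content has already been isolated in the preceding propositions, so the proof is mainly a matter of assembling them and checking that their hypotheses are met. The easy implications come first: (i)$\Rightarrow$(ii) is immediate since every $\kappa$-Fr\'{e}chet--Urysohn space is Ascoli (see the diagram in the Introduction), and (ii)$\Rightarrow$(iii) holds because every Ascoli space is sequentially Ascoli. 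For (i)$\Rightarrow$(iv) I would argue purely topologically: if $E$ is $\kappa$-Fr\'{e}chet--Urysohn, then for every open $U\subseteq E$ and every $g\in\overline{U}$ there is a sequence in $U$ converging to $g$, whence $\overline{U}\subseteq [U]_s$; since the reverse inclusion $[U]_s\subseteq\overline{U}$ always holds (by transfinite induction, each $[U]_\alpha$ stays inside the closed set $\overline{U}$), we obtain $[U]_s=\overline{U}$, which is closed, giving (iv).

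The two implications into the property $(\kappa)$ are supplied directly by Proposition \ref{p:Cp-seq-Ascoli}. Under hypothesis (a) the space $Y$ is metrizable and non-compact and $E$ is a relatively $Y_p$-Tychonoff subspace of $C_p(X,Y)$ containing all constant functions, so the hypotheses of that proposition hold. Its clause (i)$\Rightarrow$(iii) then yields (iii)$\Rightarrow$(v) here, and its clause (ii)$\Rightarrow$(iii) yields (iv)$\Rightarrow$(v) here.

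The heart of the argument is (v)$\Rightarrow$(i), and here I would split into cases. If $X$ is finite, then $E$ is a subspace of the product $Y^X$ with the pointwise topology; since $Y$ is metrizable and $X$ is finite, $Y^X$, and hence $E$, is metrizable, so $E$ is $\kappa$-Fr\'{e}chet--Urysohn and (i) holds automatically (and $(\kappa)$ is trivially satisfied by finite $X$). If $X$ is infinite, I would invoke the appropriate converse proposition according to the topology of $Y$. When $Y$ is discrete it is a discrete abelian group with more than one point, since a non-compact discrete group is infinite; thus Proposition \ref{p:Cp-kappa-FU-discrete} applies, using hypothesis (a), and property $(\kappa)$ forces $E$ to be $\kappa$-Fr\'{e}chet--Urysohn. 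When $Y$ is non-discrete, Proposition \ref{p:Cp-kappa-FU} applies, its hypothesis (i) being supplied by (a) and its hypothesis (ii), the $p$-approximation property, being supplied by (b); again property $(\kappa)$ yields that $E$ is $\kappa$-Fr\'{e}chet--Urysohn. This closes the cycle (i)$\Rightarrow$(ii)$\Rightarrow$(iii)$\Rightarrow$(v)$\Rightarrow$(i), while the chain (i)$\Rightarrow$(iv)$\Rightarrow$(v)$\Rightarrow$(i) incorporates (iv); hence all five statements are equivalent.

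I do not anticipate any single deep obstacle, precisely because the substantive work lives in Propositions \ref{p:Cp-seq-Ascoli}, \ref{p:Cp-kappa-FU-discrete} and \ref{p:Cp-kappa-FU}. The one point demanding care is bookkeeping: verifying that the standing hypotheses (metrizability and non-compactness of $Y$, the relatively $Y_p$-Tychonoff property, containment of the constants, and the $p$-approximation property when $Y$ is non-discrete) transfer correctly into each cited proposition, and handling the degenerate case of finite $X$ separately, since the two converse propositions assume $X$ infinite and so do not apply to it directly.
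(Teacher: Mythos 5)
Your proposal is correct and follows essentially the same route as the paper: the same web of implications, with (iii)$\Rightarrow$(v) and (iv)$\Rightarrow$(v) supplied by Proposition \ref{p:Cp-seq-Ascoli} and (v)$\Rightarrow$(i) by the converse propositions. In fact you are slightly more careful than the paper's own one-line citation of Proposition \ref{p:Cp-kappa-FU} for (v)$\Rightarrow$(i): since that proposition assumes $Y$ non-discrete and $X$ infinite, your explicit appeal to Proposition \ref{p:Cp-kappa-FU-discrete} when $Y$ is discrete and your separate treatment of finite $X$ (where $E$ is metrizable and property $(\kappa)$ holds vacuously) fill in cases the published proof leaves implicit.
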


\begin{proof}
The implication (i)$\Rightarrow$(ii) follows from Theorem 2.5 of \cite{Gabr-B1} which states that every $\kappa$-Fr\'{e}chet--Urysohn space is Ascoli. 
The implications (i)$\Rightarrow$(iv) and (ii)$\Rightarrow$(iii) are trivial. The implications (iii)$\Rightarrow$(v) and (iv)$\Rightarrow$(v) follow from Proposition \ref{p:Cp-seq-Ascoli}. Finally, (v) implies (i) by Proposition \ref{p:Cp-kappa-FU}.\qed
\end{proof}

\begin{theorem} \label{t:Cb-Ascoli}
Let $Y$ be a non-trivial metrizable locally convex space, $X$ be a Tychonoff space, and let $E=C_p(X,Y)$, $C^b_p(X,Y)$ or $E=C^{rc}_p(X,Y)$. Then the following assertions are equivalent:
\begin{enumerate}
\item[{\rm (i)}] $E$ is a $\kappa$-Fr\'{e}chet--Urysohn space;
\item[{\rm (ii)}] $E^\lambda$ is $\kappa$-Fr\'{e}chet--Urysohn for every cardinal $\lambda>0$;
\item[{\rm (iii)}] $E$ is an Ascoli space;
\item[{\rm (iv)}] $E^\lambda$ is  Ascoli for every cardinal $\lambda>0$;
\item[{\rm (v)}] $E$ is a sequentially Ascoli space;
\item[{\rm (vi)}] $E^\lambda$ is  sequentially Ascoli  for every cardinal $\lambda>0$;
\item[{\rm (vii)}] $X$ is either finite or has the property $(\kappa)$;
\item[{\rm (viii)}] $E^\lambda$ is $\kappa$-Fr\'{e}chet--Urysohn (Ascoli or sequentially Ascoli) for some cardinal $\lambda>0$.
\end{enumerate}
\end{theorem}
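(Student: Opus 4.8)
The plan is to reduce the whole cycle to two facts: the single-power ($\lambda=1$) characterization already available, and the productivity statement (vii)$\Rightarrow$(ii), which carries all the new content. First I would record that a non-trivial metrizable locally convex space $Y$ is a non-compact metrizable abelian topological group, so that Theorem \ref{t:Cp-seq-Ascoli} applies once its hypotheses (a),(b) are verified for $E$. These hold in all three cases: by Proposition \ref{p:Cb-Yp-approx} the space $X$ is $Y_p$-Tychonoff, each of $C_p(X,Y)$, $C^b_p(X,Y)$, $C^{rc}_p(X,Y)$ is a relatively $Y_p$-Tychonoff subgroup of $C_p(X,Y)$ containing the constants, and each has the $p$-approximation property (needed since $Y$, being a non-trivial locally convex space, is non-discrete). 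Consequently, whenever $X$ is infinite, Theorem \ref{t:Cp-seq-Ascoli} yields the equivalence of (i), (iii), (v) and the property $(\kappa)$ of $X$; and when $X$ is finite, $E=Y^{|X|}$ is metrizable, hence $\kappa$-Fr\'echet--Urysohn, while (vii) holds through its ``$X$ finite'' clause. This establishes the equivalence of (i), (iii), (v) and (vii).

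For the reverse implications I would use that $E$ is a retract of every power $E^\lambda$ (the zero element of the group $E$ furnishes a section of the projection $E^\lambda\to E$). Since $\kappa$-Fr\'echet--Urysohn $\Rightarrow$ Ascoli $\Rightarrow$ sequentially Ascoli, each of (ii), (iv), (vi), (viii) implies that $E^\lambda$ is sequentially Ascoli for some $\lambda>0$; by Proposition \ref{p:seq-AscoliCat}(i) the retract $E$ is then sequentially Ascoli, i.e. (v) holds, whence (vii) by the previous paragraph. The easy forward implications run the other way around the diagram: (ii)$\Rightarrow$(i) by taking $\lambda=1$, (ii)$\Rightarrow$(iv) by Theorem 2.5 of \cite{Gabr-B1} ($\kappa$-Fr\'echet--Urysohn $\Rightarrow$ Ascoli) applied to each $E^\lambda$, (iv)$\Rightarrow$(vi) trivially, and (ii)$\Rightarrow$(viii) trivially. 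Thus the only substantial step left is (vii)$\Rightarrow$(ii): if $X$ is finite or has the property $(\kappa)$, then $E^\lambda$ is $\kappa$-Fr\'echet--Urysohn for every cardinal $\lambda>0$.

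To prove (vii)$\Rightarrow$(ii) I would first make the routine identifications $C_p(X,Y)^\lambda\cong C_p(X,Y^\lambda)$, $C^b_p(X,Y)^\lambda\cong C^b_p(X,Y^\lambda)$ and $C^{rc}_p(X,Y)^\lambda\cong C^{rc}_p(X,Y^\lambda)$, using that in the product $Y^\lambda$ boundedness and relative compactness of a set are coordinatewise (Tychonoff's theorem for the latter). For the full space there is the additional realization $C_p(X,Y)^\lambda\cong C_p\big(\bigoplus_{\alpha<\lambda}X,\,Y\big)$; here one checks that the topological sum $\bigoplus_{\alpha<\lambda}X$ inherits the property $(\kappa)$ from $X$ (a $\Delta$-system/sunflower argument applied to the finite supports of a disjoint sequence of finite sets, combined with finitely many applications of property $(\kappa)$ inside the ``root'' copies and clopen separation of the disjoint ``petals''), so Theorem \ref{t:Cp-seq-Ascoli} applies directly to this new domain and disposes of the full-space case. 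The finite-$X$ instance reduces to the statement that any power $Y^\mu$ of a metrizable locally convex space is $\kappa$-Fr\'echet--Urysohn, which follows from Theorem \ref{t:Cp-seq-Ascoli} applied to $Y^\mu\cong C_p(D_\mu,Y)$ over a discrete domain $D_\mu$ (discrete spaces have the property $(\kappa)$). So for $C^b_p$ and $C^{rc}_p$ the remaining target is to show that $C^b_p(X,Y^\lambda)$ and $C^{rc}_p(X,Y^\lambda)$ are $\kappa$-Fr\'echet--Urysohn when $X$ has the property $(\kappa)$.

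The hard part is that $Y^\lambda$ is not metrizable for uncountable $\lambda$, so Proposition \ref{p:Cp-kappa-FU} cannot be invoked verbatim. I would overcome this by a localization argument: given an open $U$ and a point $g\in\overline{U}\setminus U$, normalized to $g=\mathbf{0}$, I would run the inductive construction of Proposition \ref{p:Cp-kappa-FU} while tracking the target coordinates that are actually constrained. Each basic $U$-witnessing neighborhood in these $C_p$-type spaces restricts only finitely many coordinates of $Y^\lambda$, so the countably many steps of the construction activate only a countable set $\Lambda_0\subseteq\lambda$ of coordinates, and on $Y^{\Lambda_0}$ the target is metrizable. Building the correcting functions $t_j,h_j$ to vanish off $\Lambda_0$ and to match the chosen $f_{n_j}$ on the (at each stage finitely many) $\Lambda_0$-coordinates keeps the sequence inside $U$, while the decreasing basis of $0$ available in the metrizable space $Y^{\Lambda_0}$ forces $t_j+h_j\to\mathbf{0}$. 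I expect this coordinate bookkeeping---defining $\Lambda_0$ incrementally so that the ``old point'' values are driven to zero along the metrizable coordinates without circularity---to be the main technical obstacle; once it is set up, the argument is a faithful localization of Proposition \ref{p:Cp-kappa-FU} and yields (vii)$\Rightarrow$(ii) uniformly for all three choices of $E$.
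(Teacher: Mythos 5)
Your architecture coincides with the paper's for everything except one step. The verification of hypotheses (a),(b) of Theorem \ref{t:Cp-seq-Ascoli} via Proposition \ref{p:Cb-Yp-approx}, the resulting equivalence (i)$\Leftrightarrow$(iii)$\Leftrightarrow$(v)$\Leftrightarrow$(vii) for infinite $X$, the separate treatment of finite $X$ (metrizable $E$), and the downward implications from powers (your retract argument is an equivalent substitute for the paper's direct-summand argument using Proposition 3.3 of \cite{LiL} and Corollary \ref{c:open-image-Ascoli}) all match the paper in substance. The genuine divergence is (vii)$\Rightarrow$(ii). The paper never touches non-metrizable targets: it cites Proposition 7.3 of \cite{Sak2} to get that $Z=\bigoplus_\lambda X$ has property $(\kappa)$, applies the already-established equivalence (with the \emph{metrizable} target $Y$) to conclude that $C^{rc}_p(Z,Y)$ is $\kappa$-Fr\'{e}chet--Urysohn, observes that $C^{rc}_p(Z,Y)$ is dense in $E^\lambda$ for all three choices of $E$ simultaneously (since $C^{rc}_p(Z,Y)\subseteq C^{rc}_p(X,Y)^\lambda\subseteq C^{b}_p(X,Y)^\lambda\subseteq C_p(X,Y)^\lambda=C_p(Z,Y)$ and density is a pointwise check), and then transfers $\kappa$-Fr\'{e}chet--Urysohnness from the dense subgroup to $E^\lambda$ by Corollary 2.2 of \cite{Gabr-B1}. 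Your identification $E^\lambda\cong C^{b}_p(X,Y^\lambda)$ (resp.\ $C^{rc}_p(X,Y^\lambda)$) is correct (boundedness and relative compactness are indeed coordinatewise), but it forces you to regeneralize Proposition \ref{p:Cp-kappa-FU} to the non-metrizable group $Y^\lambda$, which the density trick avoids entirely. If completed, your route would yield a stronger standalone proposition; the paper's route buys a few lines in place of a page.

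Two concrete soft spots remain in your version. First, the localization itself: your instinct to match $f_{n_j}$ only in the finitely many activated coordinates and to let $t_j,h_j$ vanish off $\Lambda_0$ is exactly right and exactly necessary --- an exact match $h_j(z)=f_{n_j}(z)$ on $D_{n_j}$, as in the original proof, would be fatal here, since $f_{n_j}(z)$ can have uncontrolled coordinates outside $\Lambda_0$, destroying pointwise convergence of $t_j+h_j$ to $\mathbf{0}$; membership in $U$ survives the coordinate compression precisely because the witnessing basic neighborhoods constrain only activated coordinates. But executing this requires redefining $B(f_n),C_n,D_n$ coordinatewise, running the analogue of condition (c) with the growing cylinders $V_n$, and producing $t_j,h_j$ from the approximation property of the metrizable finite subproduct $Y^{S_j}$ embedded by zero-extension; this is all plausible but is the entire content of the step, and you have sketched rather than proved it. Second, your justification that $\bigoplus_{\alpha<\lambda}X$ inherits property $(\kappa)$ by ``a $\Delta$-system/sunflower argument'' fails as stated: the $\Delta$-system lemma is false for countable families of finite sets of unbounded size (take nested supports $S_n=\{0,\dots,n\}$, which admit no infinite $\Delta$-subsystem). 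The statement itself is true --- it is Sakai's Proposition 7.3, which is what the paper cites --- and can be proved by diagonalizing over the countably many copy-indices that meet infinitely many of the $A_n$, applying property $(\kappa)$ inside each such copy along a nested chain of subsequences; but sunflowers are the wrong tool, so either cite Sakai or replace the sketch by this diagonalization.
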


\begin{proof}
If $X$ is finite, then $E=Y^{|X|}$ is metrizable. Therefore $E^\lambda$ is $\kappa$-Fr\'{e}chet--Urysohn for every cardinal $\lambda>0$, see \cite{LiL} or \cite{Gabr-B1}. Now we assume that $X$ is infinite.

The equivalences (i)$\Leftrightarrow$(iii)$\Leftrightarrow$(v)$\Leftrightarrow$(vii) follow from Theorem \ref{t:Cp-seq-Ascoli} and Proposition \ref{p:Cb-Yp-approx}.
The implication (ii)$\Rightarrow$(iv) follows from Theorem 2.5 of \cite{Gabr-B1}, 
and  (iv)$\Rightarrow$(vi) is clear.
The implications (ii)$\Rightarrow$(i), (iv)$\Rightarrow$(iii) and (vi)$\Rightarrow$(v) are also clear.

Let us prove that (vii) implies (ii). Assume that $X$ has the property $(\kappa)$. Then, by Proposition 7.3 of  \cite{Sak2}, the topological sum $Z=\bigoplus_\lambda X$ of $\lambda$ copies of the space $X$ also has the property $(\kappa)$. Therefore, by the equivalence (i)$\Leftrightarrow$(vii), the space $C^{rc}_p(Z,Y)$ is $\kappa$-Fr\'{e}chet--Urysohn. It is clear that the space $C^{rc}_p(Z,Y)$ is naturally homeomorphic to a dense subset of the locally convex space $E^\lambda$. Thus  $E^\lambda$ is $\kappa$-Fr\'{e}chet--Urysohn by Corollary 2.2 of \cite{Gabr-B1}.

The implications (ii)$\Rightarrow$(viii), (iv)$\Rightarrow$(viii) and (vi)$\Rightarrow$(viii) are clear. Finally, the implication (viii)$\Rightarrow$(i) ((viii)$\Rightarrow$(iii) or (viii)$\Rightarrow$(v)) follows from the fact that $E$ is a direct summand of $E^\lambda$ and Proposition 3.3 of \cite{LiL} (or Corollary \ref{c:open-image-Ascoli}, respectively).\qed
\end{proof}

Now we give an application of Theorem \ref{t:Cp-seq-Ascoli} to spaces of Baire functions. Let $X$ and $Y$ be topological spaces.  We say that a sequence $\{f_n\}_{n\in\w}\subseteq Y^X$ {\em stably converges} to a function $f\in Y^X$ if for every $x\in X$ the set $\{n\in\w:f_n(x)\not= f(x)\}$ is finite.  Set $B_0(X,Y)=B_0^{st}(X,Y):=C_p(X,Y)$. For every countable nonzero ordinal $\alpha$,  denote by $B_\alpha(X,Y)\subseteq Y^X$ ($B_\alpha^{st}(X,Y)\subseteq Y^X$) the family of all functions $f:X\to Y$ which are pointwise limits of function sequences from $\bigcup_{\beta<\alpha}B_\beta(X,Y)$ ($\bigcup_{\beta<\alpha}B_\beta^{st}(X,Y)$, respectively). The family $B(X,Y):=\bigcup_{\alpha\in\w_1} B_\alpha(X,Y)$ is the class of all Baire functions from $X$ to $Y$. It is clear that
\[
C_p(X,Y) \hookrightarrow B_1^{st}(X,Y) \hookrightarrow B_1(X,Y) \hookrightarrow B(X,Y).
\]

For the first countable spaces $X$ the following theorem was proved in (a) of Theorem~3.5 in \cite{Gabr-B1}.
\begin{theorem} \label{t:Baire-kFU}
Let $Y$ be a metrizable abelian group, $X$ be a $Y$-Tychonoff space, and let $H$ be a subgroup  of $Y^X$ containing $B_1^{st}(X,Y)$. Then $H$ is a $\kappa$-Fr\'{e}chet--Urysohn space.
\end{theorem}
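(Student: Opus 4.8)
The plan is to verify the $\kappa$-Fr\'{e}chet--Urysohn property directly, adapting the inductive scheme of Proposition \ref{p:Cp-kappa-FU} while using the richness of $B_1^{st}(X,Y)$ in place of the property $(\kappa)$. First I would dispose of the trivial case: if $X$ is finite then $H\subseteq Y^X$ is a subgroup of the metrizable group $Y^{|X|}$, hence metrizable and $\kappa$-Fr\'{e}chet--Urysohn, so I may assume $X$ infinite. Since $H$ is a topological group it is homogeneous, so it suffices to show that for every open $U\subseteq H$ with $\mathbf 0\in\overline U\setminus U$ there is a sequence in $U$ converging to $\mathbf 0$. Fix a translation-invariant metric $\rho$ on $Y$ together with a decreasing base $\{V_m\}_{m\in\w}$ of neighborhoods of $0\in Y$.

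Next I would run the inductive construction from Proposition \ref{p:Cp-kappa-FU}. Using that $\mathbf 0\in\overline U$ and that for each $f\in U$ the openness of $U$ yields a finite set $F(f)\subseteq X$ witnessing a basic pointwise neighborhood of $f$ inside $U$, I build functions $f_n\in U$ and finite sets $F(f_n)$ with $\bigcup_{i<n}F(f_i)\subsetneq F(f_n)$ and with $f_n$ taking values in $V_n$ on $\bigcup_{i<n}F(f_i)$. Writing $B(f_n)=\{x\in F(f_n):f_n(x)\neq 0\}$, the sets $C_n:=B(f_n)\setminus\bigcup_{i<n}B(f_i)$ are pairwise disjoint and nonempty, while $D_n:=B(f_n)\setminus C_n$ consists of points carried over from earlier supports, where $f_n$ is already $V_n$-small.

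The decisive step is to turn $\{f_n\}$ into a sequence converging to $\mathbf 0$ without appealing to property $(\kappa)$. As in Proposition \ref{p:Cp-kappa-FU} I would seek correctors $t_j,h_j\in H$ along a subsequence so that $(t_j+h_j){\restriction}_{F(f_{n_j})}=f_{n_j}{\restriction}_{F(f_{n_j})}$ (whence $t_j+h_j\in U$) and $t_j+h_j\to\mathbf 0$. The part $h_j$, carrying the old support $D_{n_j}$, can be chosen inside $B_1^{st}(X,Y)$ with range in $V_{n_j}$, so that $h_j\to\mathbf 0$ uniformly. The part $t_j$ must reproduce $f_{n_j}$ on the finite set $C_{n_j}$ and vanish on $A(f_{n_j})\cup D_{n_j}$, where $A(f_n):=F(f_n)\setminus B(f_n)$. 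Here is where the hypothesis $H\supseteq B_1^{st}(X,Y)$ enters: instead of spreading $t_j$ over an open neighborhood of $C_{n_j}$ (which would require a point-finite expansion, i.e.\ exactly property $(\kappa)$), I would realize $t_j$ as a genuine stable limit of continuous functions whose supports collapse onto the finite set $C_{n_j}$. Since the $C_n$ are pairwise disjoint, each point of $X$ meets at most one $C_{n_j}$, and this should force $t_j\to\mathbf 0$ pointwise, so that $\{t_j+h_j\}\subseteq U$ converges to $\mathbf 0$.

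The hard part is precisely the construction of the correctors $t_j$ inside $B_1^{st}(X,Y)$ together with the verification that $t_j\to\mathbf 0$ at \emph{every} point of $X$. When $X$ is first countable (the case of Theorem 3.5(a) of \cite{Gabr-B1}) this is immediate: each point of $C_{n_j}$ has a countable neighborhood base, so one builds continuous bumps shrinking onto $C_{n_j}$ whose stable limit is the finitely supported function agreeing with $f_{n_j}$ on $C_{n_j}$ and vanishing elsewhere, and pairwise disjointness of the $C_n$ yields stable, hence pointwise, convergence to $\mathbf 0$. For a general $Y$-Tychonoff space this argument breaks down, since a finitely supported function need \emph{not} belong to $B_1^{st}(X,Y)$ at a point lacking a countable neighborhood base. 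The main obstacle is therefore to replace these shrinking bumps by a stable-convergence argument that still produces correctors $t_j\in B_1^{st}(X,Y)$ dying out pointwise at such points; I expect the essential work to be done by the defining stable convergence of $B_1^{st}$, representing each corrector as a function that is continuous on an increasing countable cover of $X$, and it is there that removing the first-countability assumption of \cite{Gabr-B1} will demand the bulk of the technical effort.
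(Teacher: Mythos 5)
Your proposal contains a genuine gap, and you name it yourself: the construction of the correctors $t_j\in B_1^{st}(X,Y)$ for a general $Y$-Tychonoff $X$ is left entirely unresolved. The difficulty is not a technicality that ``stable convergence'' can be expected to absorb. If you insist that the support of $t_j$ collapse onto the finite set $C_{n_j}$, then at a point of $C_{n_j}$ without countable character the resulting finitely supported function need not be a stable (or even pointwise) limit of continuous functions, so it may fail to lie in $B_1^{st}(X,Y)$ --- exactly as you observe. If instead you allow $t_j$ to live on an open neighborhood of $C_{n_j}$, then pairwise disjointness of the sets $C_n$ no longer forces $t_j\to\mathbf{0}$ pointwise; for that you need the family of neighborhoods to be point-finite, i.e.\ precisely a property-$(\kappa)$-type expansion. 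So your plan to ``use the richness of $B_1^{st}(X,Y)$ in place of the property $(\kappa)$'' is circular: avoiding $(\kappa)$ drives you back to finitely supported correctors, which $B_1^{st}$ does not contain in general, and your closing sentence is a statement of the problem rather than a solution.

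The paper dissolves this tension by changing the underlying space rather than the corrector construction. First, by Corollary 2.2 of \cite{Gabr-B1} and density, it suffices to treat $H=B_1^{st}(X,Y)$. The pivotal step is then to regard $B(X,Y)$, hence $H$, as a subspace of $C_p(X_{\aleph_0},Y)$, where $X_{\aleph_0}$ is the $P$-modification of $X$ (Proposition 5.3 of \cite{GO}): Baire-one stable functions on $X$ become \emph{continuous} functions on $X_{\aleph_0}$. Since $X_{\aleph_0}$ is a Tychonoff $P$-space, it is zero-dimensional (hence $Y$-Tychonoff) and has the property $(\kappa)$ automatically by Proposition \ref{p:kappa-P-space} --- in a $P$-space any countable union of the disjoint finite sets is closed and discrete, so disjoint clopen point-finite expansions always exist. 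Proposition 5.15 of \cite{GO} supplies the relative $Y_p$-Tychonoffness of $H$ in $C_p(X_{\aleph_0},Y)$ and, via Example \ref{exa:zero-dim-Yp-approx}, the $p$-approximation property; then Proposition \ref{p:Cp-kappa-FU-discrete} (discrete $Y$) or Proposition \ref{p:Cp-kappa-FU} (non-discrete $Y$) applies verbatim. In other words, the property $(\kappa)$ is not bypassed at all: it is used on $X_{\aleph_0}$, where it holds for free, and your correctors $t_j$ are built there with genuinely clopen point-finite supports. This transfer to the $P$-modification is the missing idea that makes the first-countability hypothesis of \cite{Gabr-B1} removable.
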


\begin{proof}
Observe that $B_1^{st}(X,Y)$ is a dense subgroup of $H$. Hence, by Corollary 2.2 of \cite{Gabr-B1}, if  $B_1^{st}(X,Y)$ is a $\kappa$-Fr\'{e}chet--Urysohn space then so is $H$. Therefore it is sufficient to show that $H=B_1^{st}(X,Y)$ is a $\kappa$-Fr\'{e}chet--Urysohn space.

It is well known (see for example Proposition 5.3 in \cite{GO}) that the space $B(X,Y)$ and hence its subgroup $H$ can be considered as a subspace of $C_p(X_{\aleph_0},Y)$, where $X_{\aleph_0}$ is the $P$-modification of $X$ (recall that $X_{\aleph_0}$ is zero-dimensional). Further, Proposition 5.15 of \cite{GO} states that for every finite subset $D$ of $Y$, the intersection $H\cap C_p(X_{\aleph_0},D)$ is a relatively $D_p$-Tychonoff subspace of $C_p(X_{\aleph_0},D)$. In particular, the group $H$ is a relatively $Y_p$-Tychonoff subspace of $C_p(X_{\aleph_0},Y)$. Note also that the space $X_{\aleph_0}$ is $Y$-Tychonoff (because it is zero-dimensional) and has the property $(\kappa)$ by Proposition \ref{p:kappa-P-space}. Now we consider two cases.

{\em Case 1. The group $Y$ is discrete.} Then $H$ is $\kappa$-Fr\'{e}chet--Urysohn by Proposition \ref{p:Cp-kappa-FU-discrete}.

{\em Case 2. The group $Y$ is not discrete.} By Example \ref{exa:zero-dim-Yp-approx} and the aforementioned Proposition 5.15 of \cite{GO}, the group $H$ has also the $p$-approximation property. Now Proposition \ref{p:Cp-kappa-FU} implies that $H$ is a $\kappa$-Fr\'{e}chet--Urysohn space. \qed
\end{proof}

Let $X$ be a Tychonoff space and $E$ be a locally convex space. A map $f:X\to E$ is called {\em bounded} ({\em relatively compact}) if the image $f(X)$ is a bounded (respectively, relatively compact) subset of $E$. For every countable ordinal $\alpha$, we denote by $B^{b}_\alpha(X,E)$, $B^{rc}_\alpha(X,E)$, $B^{st,b}_\alpha(X,E)$ or $B^{st,rc}_\alpha(X,E)$ the family of all functions from $B_\alpha(X,E)$ or $B^{st}_\alpha(X,E)$ which are bounded or relatively compact, respectively. It is clear that
\[
B^{st,rc}_\alpha(X,E) \subseteq B^{st,b}_\alpha(X,E) \subseteq B^{st}_\alpha(X,E) \; \mbox{ and } \; B^{rc}_\alpha(X,E) \subseteq B^{b}_\alpha(X,E) \subseteq B_\alpha(X,E).
\]
The next assertion generalizes (b) of Theorem~3.5 in \cite{Gabr-B1}.

\begin{theorem} \label{t:Baire-kFU-lcs}
Let $E$ be a metrizable locally convex space, $X$ be a Tychonoff space, and let $H$ be a subgroup  of $E^X$ containing $B_1^{st,rc}(X,E)$. Then $H$ is a $\kappa$-Fr\'{e}chet--Urysohn space.
\end{theorem}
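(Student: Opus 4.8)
The plan is to run the argument of Theorem \ref{t:Baire-kFU} almost verbatim, the only genuinely new ingredient being careful bookkeeping of the relative-compactness constraint. First I would dispose of the trivial cases: if $E=\{0\}$ then $H=\{0\}$ is $\kappa$-Fr\'{e}chet--Urysohn, and if $X$ is finite then it is discrete, so $B_1^{st,rc}(X,E)=C_p(X,E)=E^{|X|}$ (every function on a finite set has finite, hence relatively compact, image), forcing $H=E^{|X|}$, which is metrizable and therefore $\kappa$-Fr\'{e}chet--Urysohn. So I assume $X$ infinite and $E$ a non-trivial, hence non-discrete, metrizable abelian group. Next I would observe that $E_0:=B_1^{st,rc}(X,E)$ is a dense subgroup of $H$: since $X$ is $E$-Tychonoff and, by Proposition \ref{p:Cb-Yp-approx}(ii), $C^{rc}_p(X,E)$ is a relatively $E_p$-Tychonoff subspace of $C_p(X,E)$, the subgroup $C^{rc}_p(X,E)=B_0^{rc}(X,E)\subseteq E_0$ already matches arbitrary finite data and hence is dense in $E^X$. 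By Corollary 2.2 of \cite{Gabr-B1} it then suffices to prove that $E_0$ itself is $\kappa$-Fr\'{e}chet--Urysohn.

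To this end I would pass to the $P$-modification $Z:=X_{\aleph_0}$, which is a zero-dimensional $T_1$ $P$-space; it is $E$-Tychonoff by Proposition 2.10 of \cite{GO} (cf.\ Example \ref{exa:zero-dim-Yp-approx}), it is infinite, and it has the property $(\kappa)$ by Proposition \ref{p:kappa-P-space}. By Proposition 5.3 of \cite{GO} every Baire function is continuous on $Z$, and since the functions in $E_0$ have relatively compact image we obtain the inclusion of subgroups $E_0\subseteq C^{rc}_p(Z,E)\subseteq C_p(Z,E)$. I then want to apply Proposition \ref{p:Cp-kappa-FU} to the subgroup $E_0$ of $C_p(Z,E)$, so I must verify that $E_0$ is a relatively $E_p$-Tychonoff subspace of $C_p(Z,E)$ containing the constant functions and having the $p$-approximation property. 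For a \emph{finite} $D\subseteq E$ one has $B_1^{st,rc}(X,E)\cap C_p(Z,D)=B_1^{st}(X,E)\cap C_p(Z,D)$, because any function with image in the finite set $D$ automatically has relatively compact image; consequently Proposition 5.15 of \cite{GO} yields that this group is relatively $D_p$-Tychonoff, and since the witnessing extensions have finite image they lie in $E_0$. Exactly as in the non-discrete case of Theorem \ref{t:Baire-kFU}, combining Example \ref{exa:zero-dim-Yp-approx} with Proposition 5.15 of \cite{GO} then gives that $E_0$ also enjoys the $p$-approximation property.

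The only place where this proof could go wrong, and hence the point I would check most carefully, is precisely this relative-compactness bookkeeping: in Theorem \ref{t:Baire-kFU} the functions produced by the relatively $E_p$-Tychonoff and $p$-approximation properties are allowed to range over all of $B_1^{st}(X,E)$, whereas here they must land in the smaller class $B_1^{st,rc}(X,E)$. This obstacle dissolves once one notices that every witnessing function can be taken with image in a finite set $D\subseteq E$, and a finite set is compact; hence relative compactness is automatic and no genuinely new construction is required. With hypotheses (i) and (ii) of Proposition \ref{p:Cp-kappa-FU} verified and $Z$ an infinite, $E$-Tychonoff space with the property $(\kappa)$, Proposition \ref{p:Cp-kappa-FU} shows that $E_0$ is $\kappa$-Fr\'{e}chet--Urysohn, and therefore so is its dense overgroup $H$.
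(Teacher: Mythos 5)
Your proposal is correct and follows essentially the same route as the paper's own proof: reduce to the dense subgroup $B_1^{st,rc}(X,E)$ via Corollary 2.2 of \cite{Gabr-B1}, regard it as a subgroup of $C_p(X_{\aleph_0},E)$ by Proposition 5.3 of \cite{GO}, verify the hypotheses of Proposition \ref{p:Cp-kappa-FU} using Proposition 5.15 of \cite{GO} together with Example \ref{exa:zero-dim-Yp-approx} and Proposition \ref{p:kappa-P-space}, and conclude. Your additional bookkeeping (the trivial cases $E=\{0\}$ and $X$ finite, and the observation that the witnessing functions have finite, hence relatively compact, image, so that relative compactness costs nothing) merely makes explicit what the paper compresses into the phrase ``as in the proof of Theorem \ref{t:Baire-kFU}'' and the citation of Proposition 5.15 of \cite{GO}.
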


\begin{proof}
As in the proof of Theorem \ref{t:Baire-kFU}, we can assume that $H=B_1^{st,rc}(X,E)$ and that $H$ can be considered as a subgroup of $C_p(X_{\aleph_0},E)$. By the aforementioned Proposition 5.15 of \cite{GO}, the group $H$ is a relatively $E_p$-Tychonoff subspace of $C_p(X_{\aleph_0},E)$ and has the $p$-approximation property. Since $X_{\aleph_0}$ is a Tychonoff $P$-space, Proposition \ref{p:kappa-P-space} implies that $X_{\aleph_0}$ has the property $(\kappa)$.  Finally, Proposition \ref{p:Cp-kappa-FU} implies that $H$ is a $\kappa$-Fr\'{e}chet--Urysohn space. \qed
\end{proof}




Now we consider the following problem:
\begin{problem} \label{prob:Ck-seq-Ascoli}
Let $Y$ be a metrizable space (for example, $Y=\IR, \mathbf{2}$ or $[0,1]$). Characterize $Y$-Tychonoff spaces $X$ for which the function space $\CC(X,Y)$ is (sequentially) Ascoli.
\end{problem}

In two partial and important cases when $Y=\mathbf{2}$ and $X$ is a zero-dimensional metric space or $Y=\IR$ and $X$ is a metrizable space, Problem \ref{prob:Ck-seq-Ascoli} is solved completely in \cite{GKP} and \cite{Gabr-C2}, respectively (the clauses (a) in (i) and (ii) follow from Proposition \ref{p:Ascoli-sufficient} and the proofs of Lemma 3.2 in \cite{GGKZ} and Lemma 2.5 in \cite{Gabr-C2}, respectively).
\begin{theorem}
\begin{enumerate}
\item[{\rm (i)}] {\rm (\cite{GGKZ})} If $X$ is a paracompact space of point-countable type, then the following assertions are equivalent: (a) $\CC(X)$ is sequentially Ascoli, (b) $\CC(X)$ is  Ascoli, (c) $\CC(X,\II)$ is sequentially Ascoli, (d) $\CC(X,\II)$ is  Ascoli, (e) $X$ is locally compact.
\item[{\rm (ii)}] {\rm (\cite{Gabr-C2})} For a metrizable zero-dimensional space $X$ the following assertions are equivalent: (a) $\CC(X,\mathbf{2})$ is sequentially $2$-Ascoli, (b) $\CC(X,\mathbf{2})$ is  Ascoli, (c) $X$ is locally compact or $X$ is not locally compact but the set $X'$ of non-isolated points of $X$ is compact.
\end{enumerate}
\end{theorem}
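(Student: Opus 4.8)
The plan is to reduce the two new sequentially Ascoli clauses to the Ascoli clauses, which are already established in \cite{GGKZ} for part~(i) and in \cite{Gabr-C2} for part~(ii), by observing that Proposition~\ref{p:Ascoli-sufficient} applies uniformly to both the Ascoli and the sequentially Ascoli settings once its index set is taken to be $\w$. In part~(i) the equivalences (b)$\Leftrightarrow$(d)$\Leftrightarrow$(e) are exactly the content of \cite{GGKZ}, and in part~(ii) the equivalence (b)$\Leftrightarrow$(c) is exactly the content of \cite{Gabr-C2}; I would quote these directly. The implications (b)$\Rightarrow$(a) and (d)$\Rightarrow$(c) in part~(i), and (b)$\Rightarrow$(a) in part~(ii), are immediate from the diagram in the Introduction, since every Ascoli space is sequentially Ascoli. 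Hence the only genuinely new implications are (a)$\Rightarrow$(e) and (c)$\Rightarrow$(e) in part~(i) and (a)$\Rightarrow$(c) in part~(ii), and I would prove all three by contraposition.

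For part~(i), suppose $X$ is not locally compact; I must show that neither $\CC(X)$ nor $\CC(X,\II)$ is sequentially Ascoli. The key step is to reuse the construction underlying Lemma~3.2 of \cite{GGKZ}: at a point $x_0\in X$ having no compact neighbourhood, paracompactness together with point-countable type yields a countable family $\{U_n:n\in\w\}$ of open subsets of the function space, a sequence $\{a_n:n\in\w\}$ with $a_n\in U_n$, and a point $z$ that is a cluster point of $\{a_n:n\in\w\}$, with $\{U_n:n\in\w\}$ compact-finite in the function space. Since this data is indexed by $\w$, it is precisely the hypothesis of the $I=\w$ form of Proposition~\ref{p:Ascoli-sufficient}, applied with $\CC(X)$ in the role of the space of that proposition; its conclusion is that $\CC(X)$ is \emph{not sequentially} Ascoli, which is (a)$\Rightarrow$(e). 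Because the functions produced in that construction are Urysohn-type and take values in $[0,1]=\II$, the very same family witnesses the failure of the sequential Ascoli property for $\CC(X,\II)$, giving (c)$\Rightarrow$(e).

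For part~(ii), suppose $X$ is metrizable and zero-dimensional and that \emph{neither} $X$ is locally compact \emph{nor} the set $X'$ of non-isolated points is compact; I must show that $\CC(X,\mathbf{2})$ is not sequentially $2$-Ascoli. Here I would follow the construction in the proof of Lemma~2.5 of \cite{Gabr-C2}, which, from the simultaneous failure of both conditions, extracts a countable compact-finite family of clopen sets $\{U_n:n\in\w\}$ in $X$ together with a point $z$ that is a cluster point of $\bigcup_{n\in\w}U_n$; zero-dimensionality guarantees that the associated characteristic functions are continuous, so the data live in $\CC(X,\mathbf{2})$. Invoking the $I=\w$ form of Proposition~\ref{p:2-Ascoli-sufficient} then shows that $\CC(X,\mathbf{2})$ is not sequentially $2$-Ascoli, which is (a)$\Rightarrow$(c) by contraposition.

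The main obstacle in all three implications is the verification of the compact-finiteness condition of Proposition~\ref{p:Ascoli-sufficient} (respectively Proposition~\ref{p:2-Ascoli-sufficient}) \emph{inside the function space} rather than in $X$: one must show that each compact subset of $\CC(X)$ (resp.\ $\CC(X,\mathbf{2})$) meets only finitely many of the sets $U_n$. This is exactly the technical core of Lemma~3.2 of \cite{GGKZ} and Lemma~2.5 of \cite{Gabr-C2}, and it is there that the structural hypotheses on $X$ --- paracompactness plus point-countable type in part~(i), metrizability plus non-compactness of $X'$ in part~(ii) --- are used in an essential way. The only new ingredient is the remark that keeping the index set equal to $\w$ throughout converts those arguments verbatim into proofs of the stronger sequential statements.
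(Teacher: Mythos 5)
Your proposal is correct and takes essentially the same route as the paper, whose entire justification is the parenthetical remark that the sequentially Ascoli clauses (a) follow from Proposition~\ref{p:Ascoli-sufficient} together with the proofs of Lemma~3.2 of \cite{GGKZ} and Lemma~2.5 of \cite{Gabr-C2} --- i.e., exactly your observation that keeping the index set equal to $\w$ turns those constructions (carried out, as you rightly stress, \emph{inside the function space}) into witnesses against the sequential property, the remaining implications being the trivial ones from Ascoli to sequentially Ascoli. Your one refinement --- using Proposition~\ref{p:2-Ascoli-sufficient} instead of Proposition~\ref{p:Ascoli-sufficient} in part~(ii) --- is in fact the precise tool needed there, since clause~(a) of~(ii) asserts the failure of the stronger sequentially $2$-Ascoli property and the witnessing sets in $\CC(X,\mathbf{2})$ are clopen.
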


For the general case of an arbitrary Tychonoff space $X$ we are able to prove the next necessary condition.
\begin{proposition} \label{p:seq-Ascoli-Ck-necessary}
If a Tychonoff space  $X$ is such that $\CC(X)$ is sequentially Ascoli, then every compact-finite sequence of compact subsets of $X$ has a strongly compact-finite subsequence.
\end{proposition}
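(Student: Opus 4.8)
The plan is to prove the statement directly: assuming $\CC(X)$ is sequentially Ascoli and given a compact-finite sequence $\{K_n\}_{n\in\w}$ of compact subsets of $X$, I will produce a subsequence together with a compact-finite family of open neighbourhoods. First I would dispose of a trivial case: if infinitely many $K_n$ are empty, that subsequence is strongly compact-finite (take $U_n=\emptyset$), so I may assume every $K_n$ is nonempty. The key device is to turn the $K_k$ into open sets in the function space. For each $k\ge 1$ I set $O_k:=\{f\in\CC(X):f(x)>k\text{ for all }x\in K_k\}$. Since the compact-open topology on $\CC(X)$ is the topology of uniform convergence on compacta, the functional $m_k(f):=\min_{x\in K_k}f(x)$ is continuous; hence $O_k=\{m_k>k\}$ is open, and $\phi_k:=\max(0,\,m_k-k)$ is a continuous real-valued function on $\CC(X)$ with $\{\phi_k>0\}=O_k$ and $\phi_k(\mathbf 0)=0$.

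The heart of the argument is to show, using sequential Ascoliness of $\CC(X)$, that some compact set $\KK\subseteq\CC(X)$ meets infinitely many $O_k$. I would prove this by contradiction: suppose every compact subset of $\CC(X)$ meets only finitely many $O_k$. Then for every compact $\KK\subseteq\CC(X)$ we have $\phi_k{\restriction}_{\KK}\equiv 0$ for all but finitely many $k$, so $\phi_k\to\mathbf 0$ in $\CC(\CC(X))$. On the other hand $\{\phi_k\}$ fails to be equicontinuous at $\mathbf 0$: given any basic neighbourhood $[C;\e]$ of $\mathbf 0$ with $C\subseteq X$ compact, compact-finiteness of $\{K_n\}$ furnishes a $k$ with $K_k\cap C=\emptyset$; as $X$ is Tychonoff the disjoint compacta $C$ and $K_k$ are completely separated, so there is continuous $u:X\to[0,1]$ with $u(C)=\{0\}$ and $u(K_k)=\{1\}$, and $g:=(k+2)u$ then satisfies $g\in[C;\e]$ while $\phi_k(g)=2$. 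Thus the convergent sequence $\{\phi_k\}\cup\{\mathbf 0\}$ in $\CC(\CC(X))$ is not equicontinuous, contradicting Theorem \ref{t:seq-R-Ascoli}(v) applied to the Tychonoff space $\CC(X)$. This yields the required $\KK$.

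To finish, let $J:=\{k:\KK\cap O_k\neq\emptyset\}$, which is infinite, and choose $h_k\in\KK\cap O_k$ for each $k\in J$. I would set $U_k:=\{x\in X:h_k(x)>k\}$, an open set containing $K_k$. To check that $\{U_k\}_{k\in J}$ is compact-finite, fix a compact $C\subseteq X$ and note that the seminorm $p_C(f):=\max_{x\in C}|f(x)|$ is continuous on $\CC(X)$, hence bounded on the compact set $\KK$, say $p_C\le M$ on $\KK$. If $x\in C\cap U_k$ then $p_C(h_k)\ge h_k(x)>k$, so $C\cap U_k\neq\emptyset$ forces $k<M$; thus only finitely many $k\in J$ meet $C$. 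Consequently $\{K_k\}_{k\in J}$ is a strongly compact-finite subsequence, as desired.

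I expect the main obstacle to be precisely the production of the compact set $\KK$. The natural temptation is to imitate Lemmas \ref{l:Cp-Ascoli-1}--\ref{l:Cp-Ascoli-2} and the $(\mathrm i)\Rightarrow(\mathrm{iii})$ part of Proposition \ref{p:Cp-seq-Ascoli}, which extract such a compact set from the sequentially Ascoli hypothesis; but those results are tailored to the \emph{pointwise} topology, since the diagonalization in Lemma \ref{l:Cp-Ascoli-1} rests on basic neighbourhoods depending on finitely many coordinates and on density in $Y^X$, and it does not survive passage to the compact-open topology. The way around this is to exploit that the compact-open topology provides continuous functionals unavailable in $C_p(X)$ --- the seminorms $p_C$ and the minima $m_k$ over the compacta $K_k$. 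These let me realize each $O_k$ as the positivity set of an explicit continuous $\phi_k$, so that non-equicontinuity can be exhibited by hand with no abstract selection, and, crucially, let me upgrade the point-finiteness obtained in the $C_p$ setting to the genuine compact-finiteness required here, via boundedness of $p_C$ on $\KK$.
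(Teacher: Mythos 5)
Your proof is correct and follows essentially the same route as the paper's: the paper defines $O_n:=\{f\in \CC(X): \sup_{x\in K_n}|f(x)-(n+1)|<\tfrac14\}$, shows $\mathbf{0}$ lies in the closure of every subsequence of the corresponding closed sets (via exactly your complete-separation construction), invokes clause (vi) of Theorem \ref{t:seq-R-Ascoli} to extract a compact $\KK\subseteq\CC(X)$ meeting infinitely many $O_n$, and then proves compact-finiteness of the $U_j$ by boundedness of $\pi_C(\KK)$ in the Banach space $C(C)$, just as you do with the seminorm $p_C$. The only cosmetic difference is that you in-line the equicontinuity argument of clause (v) by constructing the explicit null-sequence $\phi_k=\max(0,m_k-k)$ in $\CC(\CC(X))$, whereas the paper delegates that step to the already-proved implication (v)$\Rightarrow$(vi).
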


\begin{proof}
Let $\{ K_n\}_{n\in\w}$ be a compact-finite sequence of nonempty compact sets in $X$. We have to show that there is a subsequence of $\{ K_n\}_{n\in\w}$ which is strongly compact-finite. For every $n\in\w$,  define a function $F_n$ on $\CC(X)$ by 
\[
F_n(f):=\| f-(n+1)\cdot\mathbf{1}\|_{K_n} =\sup\big\{ |f(x)-(n+1)|: x\in K_n\big\}.
\]
Since $F_n$ is the composition of the restriction map $\CC(X)\to \CC(K_n)$ and the norm of the Banach space $C(K_n)$, we obtain that $F_n$ is continuous. For every $n\in\w$,  define
\[
A_n:=\left\{f\in \CC(X): F_n(f)\leq \tfrac{1}{8}\right\} \;  \mbox{ and }\; O_n:=\left\{f\in \CC(X): F_n(f)< \tfrac{1}{4} \right\}.
\]
so $A_n$ is a functionally closed subset of $\CC(X)$ and $O_n$ is a functionally open neighborhood of $A_n$.

{\em Claim 1. $\mathbf{0}\in \overline{\bigcup_{k\in\w} A_{n_k}} \setminus \bigcup_{n\in\w} A_{n_k}$, for every sequence $n_0<n_1<\cdots$ in $\w$. So each subsequence of $\{ A_n\}_{n\in\w}$ is not locally finite.} Indeed, let $[K;\e]$ be a standard neighborhood of the zero-function $\mathbf{0}$ in $\CC(X)$, where $K\subseteq X$ is compact and $\e>0$. As $\{ K_n\}_{n\in\w}$ is compact-finite, there is an $m\in\w$ such that $K\cap K_{n_m}=\emptyset$. Then any function $h\in \CC(X)$ such that $h{\restriction}_K=0$ and $h{\restriction}_{K_{n_m}}=n_m+1$ belongs to $[K;\e]\cap A_{n_m}$. The claim is proved.
\smallskip

Now, since $\CC(X)$ is sequentially Ascoli,  Claim 1 and (vi) of Theorem  \ref{t:seq-R-Ascoli} imply that the sequence $\{ O_n\}_{n\in\w}$ is not compact-finite. Therefore there is a compact subset $\KK$ of $\CC(X)$ such that the set $J:=\{ n\in\w: \KK\cap O_n\not=\emptyset\}$ is infinite. For every $j\in J$, choose a function $h_j\in \KK\cap O_j$ and set
\[
U_j:= \{ x\in X: |h_j(x) -(j+1)|<\tfrac{1}{2}\}.
\]
Note that $U_j$ is an open  neighborhood of $K_j$. Therefore to show that the sequence  $\{ K_j\}_{j\in J}$ is strongly compact-finite it is sufficient to prove that the sequence $\U=\{ U_j\}_{j\in J}$ is compact-finite. To this end, fix a compact subset $K$ of $X$ and let $\pi_K: \CC(X)\to \CC(K)$ be the restriction map. Set $J_0:=\{ j\in J: U_j\cap K\not=\emptyset\}$ and suppose for a contradiction that $J_0$ is infinite. For every $j\in J_0$, set $g_j:=\pi_K(h_j)$. Then $g_j\in \pi_K(\KK)$, where $\pi_K(\KK)$ is a compact subset of the Banach space $C(K)$. Therefore there is $d>0$ such that $\| g_j\|_K \leq d$ for every $j\in J_0$. However, since $K\cap U_j\not=\emptyset$ by our supposition, we obtain that $\| g_j\|_K \geq j+\tfrac{1}{2} \to\infty$. This contradiction shows that $J_0$ is finite. Thus $\U$ is compact-finite.\qed
\end{proof}

We end the paper with the following  problem.
\begin{problem}
Is the converse assertion in Proposition \ref{p:seq-Ascoli-Ck-necessary} true? Is it true that the space $\CC(X)$ is Ascoli if and only if it is $\kappa$-Fr\'{e}chet--Urysohn?
\end{problem}

A characterization of $\kappa$-Fr\'{e}chet--Urysohn spaces $\CC(X)$ is given by Sakai \cite{Sakai-3}.


\bibliographystyle{amsplain}

\begin{thebibliography}{10}


\bibitem{Arhangel}
A.V. Arhangel'skii, \emph{Topological function spaces}, Math. Appl. \textbf{78}, Kluwer Academic Publishers, Dordrecht, 1992.

\bibitem{ArT}
A.V.~Arhangel'skii, M.G.~Tkachenko, \emph{Topological groups and related strutures}, Atlantis Press/World Scientific, Amsterdam-Raris, 2008.

\bibitem{Banakh-Survey}
T. Banakh, Fans and their applications in General Topology, Functional Analysis and Topological Algebra,  arXiv:1602.04857.

\bibitem{Banakh-g-Ascoli}
T. Banakh,  Characterizing (sequentially) Ascoli spaces, preprint.

\bibitem{BG}
T. Banakh, S. Gabriyelyan,  On the $\CC$-stable closure of the class of (separable) metrizable spaces, Monatshefte Math.   \textbf{180} (2016), 39--64.

\bibitem{BG-Baire}
T. Banakh, S. Gabriyelyan,  Baire category properties of some Baire type function spaces, Topology Appl. \textbf{272} (2020), 107078, 43 pp.

\bibitem{BG-JNP}
T. Banakh, S. Gabriyelyan, The Josefson--Nissenzweig property for locally convex spaces, preprint.



\bibitem{DAS1}
A.~Dorantes-Aldama, D.~Shakhmatov,  Selective sequential pseudocompactness, Topology Appl. {\bf 222} (2017), 53--69.

\bibitem{Eng}
R.~Engelking, \emph{ General Topology}, Heldermann Verlag, Berlin, 1989.

\bibitem{Gabr}
S. Gabriyelyan, Free locally convex spaces and the  $k$-space property, Canadian Math. Bull. \textbf{57} (2014), 803--809.

\bibitem{Gabr-C2}
S. Gabriyelyan, Topological properties of function spaces $\CC(X,2)$ over zero-dimensional metric spaces $X$,  Topology Appl. \textbf{209} (2016), 335--346.

\bibitem{Gab-Top-Nul}
S. Gabriyelyan, Topological properties of the group of the null sequences valued in an Abelian topological group, Topology Appl.  \textbf{207} (2016), 136--155.

\bibitem{Gabr-LCS-Ascoli}
S.~Gabriyelyan, On the Ascoli property for locally convex spaces, Topology Appl. \textbf{230} (2017), 517--530.

\bibitem{Gab-LF}
S. Gabriyelyan, Topological properties of strict $(LF)$-spaces and strong duals of Montel  strict $(LF)$-spaces,  Monatshefte f\"{u}r Mathematik,  \textbf{189} (2019), 91--99.

\bibitem{Gabr:weak-bar-L(X)}
S.~Gabriyelyan, Locally convex properties of free locally convex spaces, J. Math. Anal. Appl. \textbf{480} (2019), 123453.

\bibitem{Gabr-B1}
S. Gabriyelyan, Topological properties of spaces of Baire functions, J. Math. Anal. Appl. \textbf{478} (2019), 1085--1099.

\bibitem{Gabr-ind-lim}
S. Gabriyelyan, Topological properties of inductive limits of  closed towers of mertrizable groups, preprint.

\bibitem{GGKZ}
S.~Gabriyelyan, J.  Greb\'{\i}k, J. K\c{a}kol, L. Zdomskyy,  The Ascoli property for function spaces, Topology Appl. \textbf{214} (2016), 35--50.

\bibitem{GKP}
S.~Gabriyelyan, J.~K{\c{a}}kol,  G. Plebanek, The Ascoli property for function spaces and the weak topology of Banach and Fr\'echet spaces, Studia Math. \textbf{233} (2016), 119--139.

\bibitem{GO}
S. Gabriyelyan, A.V. Osipov,  Topological properties of some function spaces, preprint.

\bibitem{GiJ}
L.~Gillman, M.~Jerison, \emph{Rings of continuous functions}, Van Nostrand, New York, 1960.

\bibitem{HR1}
 E.~Hewitt, K.A.~Ross,  \emph{Abstract Harmonic Analysis}, Vol. I, 2nd ed. Springer-Verlag, Berlin, 1979.


\bibitem{Jar}
H.~Jarchow, \emph{Locally Convex Spaces}, B.G. Teubner, Stuttgart, 1981.


\bibitem{kaksax}
J. K\c akol, S.A. Saxon,   Montel (DF)-spaces, Sequential (LM)-spaces and the strongest locally convex topology.  J. London Math. Soc. \textbf{66} (2002), 388--406.

\bibitem{LiL}
C.~Liu, L.D.~Ludwig, $\kappa$-Fr\'{e}chet--Urysohn spaces, Houston J. Math. \textbf{31} (2005), 391--401.

\bibitem{mcoy}
R.A.~McCoy, I.~Ntantu, \emph{Topological Properties of Spaces of Continuous Functions}, Lecture Notes in Math.  \textbf{1315}, 1988.


\bibitem{Michael-lc}
E. Michael, Local compactness and cartesian products of quotient maps and $K$-spaces, Ann. Inst. Fourier (Grenoble) \textbf{18} (1968), 281--286.

\bibitem{Pol-1974}
R. Pol, Normality in function spaces, Fund. Math. {84} (1974), 145--155.



\bibitem{Sak2}
M.~Sakai, Two properties of $C_p(X)$ weaker than Fr\'{e}chet--Urysohn property, Topology Appl. \textbf{153} (2006), 2795--2804.

\bibitem{Sakai-3}
M.~Sakai, $\kappa$-Fr\'{e}chet-Urysohn property of $\CC(X)$, Topology Appl. \textbf{154} (2007), 1516--1520.

\bibitem{S-W}
G. Schl\"{u}chtermann, R. F. Wheeler, The Mackey dual of a Banach space, Noti de Matematica, \textbf{XI} (1991), 273--287.

\bibitem{Schmets}
J. Schmets, \emph{Spaces of vector-valued continuous functions}, LNM 1003, Springer, Berlin, 1983.

\bibitem{Tkachuk-Book-2010}
V. Tkachuk, \emph{A $C_p$-theory problem book, Topological and Function Spaces}, Springer,  New York, 2010.

\bibitem{Vaughan}
J.E. Vaughan, Countably compact and sequentially compact spaces, In {\em Handbook of set-theoretic topology}, Edit. by K. Kunen and J.E. Vaughan, Elsevier, 1984, pp. 569--602.


\bibitem{Whitehead}
J.H.C. Whitehead, A note on a theorem of Borsuk, Bull. Amer. Math. Soc \textbf{54} (1958), 1125--1132.


\end{thebibliography}

\end{document}